\documentclass[12pt]{article}
\usepackage{url}
\usepackage{amsfonts,amsmath,amsthm,amssymb}
\usepackage{comment}
\newtheorem{lem}{Lemma}[section]
\newtheorem{prop}[lem]{Proposition}
\newtheorem{thm}[lem]{Theorem}

\newtheorem{cor}[lem]{Corollary}
\newtheorem{main}{Theorem}
\theoremstyle{remark}
\newtheorem{rem}[lem]{Remark}
\theoremstyle{definition}

\newcommand{\FF}{\mathbb{F}}

\newcommand{\cB}{\mathcal{B}}
\newcommand{\cD}{\mathcal{D}}
\newcommand{\cP}{\mathcal{P}}
\newcommand{\allone}{\mathbf{1}}

\DeclareMathOperator{\wt}{wt}
\DeclareMathOperator{\supp}{supp}

\allowdisplaybreaks

\usepackage{lscape}

\begin{document}
\title{On $t$-designs supported by self-orthogonal codes}

\author{
Masaaki Harada\thanks{
Research Center for Pure and Applied Mathematics,
Graduate School of Information Sciences,
Tohoku University, Sendai 980--8579, Japan.
email: \protect\url{mharada@tohoku.ac.jp}
}
}

\maketitle

\begin{abstract}
We study self-orthogonal $t$-$(v,k,\lambda)$ designs
satisfying the condition that the codes generated by the designs
are self-orthogonal codes whose
codewords of minimum weight $k$ correspond to the blocks.
A classification of such designs is given
under the conditions that $t\geq \lfloor k/4\rfloor+1\geq2$ and $t \le 11$.
This yields a classification of 
self-orthogonal codes of minimum weight $k$
whose codewords of minimum weight form a $t$-design under the
same conditions.
\end{abstract}

\section{Introduction}\label{sec:1}

A $t$-$(v,k,\lambda)$ design $\cD$ is a pair of a set $\cP$ of $v$ points
and a collection $\cB$ of $k$-element subsets of 
$\cP$ (called blocks)
such that every $t$-element subset of $\cP$ is contained in exactly
$\lambda$ blocks, where $\lambda$ is a positive integer.
As usual, we assume that $t < k < v$, throughout this paper.
Hereafter, all designs are assumed to have no repeated blocks.
Often a $t$-$(v,k,\lambda)$ design is simply called a $t$-design.
A $t$-design can be represented by its (block-point)
incidence matrix $A=(a_{ij})$, where $a_{ij}=1$ if the $j$th
point is contained in the $i$th block
and $a_{ij}=0$ otherwise.
Two designs with the same parameters are \emph{isomorphic} if
there is a bijection between their point sets that maps the blocks
of the first design into blocks of the second design.

A $t$-$(v,k,\lambda)$ design $\cD$ is called \emph{self-orthogonal} if
the block intersection numbers have the same parity
as the block size $k$ (see~\cite{T86}).
The term self-orthogonal is due to a natural connection between 
such designs and self-orthogonal codes.
If $\cD$ is a self-orthogonal $t$-$(v,k,\lambda)$ design
with $k$ even, then the code $C(\cD)$
generated by the rows of an incidence matrix of $\cD$
is self-orthogonal.
Throughout this paper, we assume that
$k$ is even for a self-orthogonal $t$-$(v,k,\lambda)$ design, 
unless otherwise specified.

We define 
a \emph{saturated} design to be a self-orthogonal $t$-$(v,k,\lambda)$
design $\cD=(\cP,\cB)$ with $k$ even, satisfying the
following conditions:
\begin{align}
&\text{$C(\cD)$ is self-dual, that is, }
C(\cD)=C(\cD)^\perp, \label{C1} \\
&\text{$\cB$ is the set of supports of the codewords of 
weight $k$ in $C(\cD)$,} \label{C2} \\
&\text{the minimum weight of $C(\cD)$ is $k$,} \label{C3} 
\end{align}
where $C(\cD)^\perp$ denotes the dual code of $C(\cD)$.
A self-orthogonal
$t$-$(v,k,\lambda)$ design with $k$ even
failing at least one of~\eqref{C1}, \eqref{C2} and \eqref{C3}
is called \emph{unsaturated}.
It is well known that the Witt system $\cD$ on $24$ points, that
is, the $5$-$(24,8,1)$ design is saturated, where
$C(\cD)$ is the extended Golay code $G_{24}$.

The only known extremal doubly even
self-dual codes of length $24\ell$
are the extended Golay code $G_{24}$ and the
extended quadratic residue code $QR_{48}$ of length $48$.
In addition, it is known that $G_{24}$ (resp.\ $QR_{48}$)
is the unique extremal doubly even self-dual code of length
$24$ (resp.\ $48$).
The existence of an extremal doubly even self-dual
code of length $72$ is a long-standing open question.
By the Assmus--Mattson theorem~\cite{AM},
the supports of the codewords of minimum weight in
an extremal doubly even self-dual code of length $24\ell$
form a self-orthogonal $5$-$(24\ell,4\ell+4,\binom{5\ell-2}{\ell-1})$ design~\cite{MS73}.
As described above,
it is well known that the Witt system on $24$ points is saturated.
For $\ell=2,3$ and $4$, every self-orthogonal
$5$-$(24\ell,4\ell+4,\binom{5\ell-2}{\ell-1})$ design is
saturated~\cite{HMT05}, \cite{HKM04} and \cite{H05}, respectively.
A powerful tool used in~\cite{HKM04} and \cite{HMT05}
is the system of fundamental equations,
sometimes called the Mendelsohn equations~\cite{CW12}
(see also~\cite{M71} and \cite{T86}).

The purpose of this paper is to begin a systematic
study of the family of saturated self-orthogonal designs.
We show that self-orthogonal designs with certain parameters
are saturated,
by proving the nonexistence of unsaturated designs with the same parameters.
As a typical example, we have the following:

\begin{main}\label{thm:24-96}
 For $\ell=1,2$ and $3$, every self-orthogonal
$(\ell+2)$-$(24\ell,4\ell+4,\lambda)$ design is 
a saturated $5$-$(24\ell,4\ell+4,\binom{5\ell-2}{\ell-1})$ design.
\end{main}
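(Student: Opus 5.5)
Let $\cD$ be a self-orthogonal $(\ell+2)$-$(24\ell,4\ell+4,\lambda)$ design for $\ell\in\{1,2,3\}$, and write $v=24\ell$, $k=4\ell+4$, $t=\ell+2$. The plan is to reduce the statement to the known fact, quoted in the introduction, that every self-orthogonal $5$-$(24\ell,4\ell+4,\binom{5\ell-2}{\ell-1})$ design is saturated for $\ell=1,2,3$ (\cite{HMT05}, \cite{HKM04}, and the classical Witt system case). So it suffices to show that $\cD$ is automatically a $5$-design with $\lambda_5=\binom{5\ell-2}{\ell-1}$.

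\textbf{Step 1: fundamental equations.} Since $k$ is even and $\cD$ is self-orthogonal, the block intersection numbers with a fixed block $B$ are even and lie in $\{0,2,\dots,k-2\}$. Fix $B$ and let $n_i$ be the number of other blocks meeting $B$ in $i$ points. The Mendelsohn equations give
\[
\sum_i \binom{i}{j} n_i=\binom{k}{j}(\lambda_j-1) \quad (0\le j\le t),
\]
where the $\lambda_j$ are the derived values $\lambda\binom{v-j}{t-j}/\binom{k-j}{t-j}$. The number of possible even intersection sizes is $k/2=2\ell+2$, and there are $t+1=\ell+3$ equations. Here the condition $t\ge\lfloor k/4\rfloor+1$ holds, since $\ell+2\ge \ell+2$.

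\textbf{Step 2: determine $\lambda$.} I will use integrality and nonnegativity of the $n_i$, together with divisibility conditions on the $\lambda_j$, to show that $\lambda$ must equal the value coming from the $5$-design, namely $\lambda_t$ of the $5$-$(24\ell,4\ell+4,\binom{5\ell-2}{\ell-1})$ design. A further handle is self-orthogonality: $C(\cD)$ has dimension at most $12\ell$. I may also use the parity restriction that every weight in $C(\cD)$ is even, or even that $C(\cD)$ is doubly even, which follows if $k\equiv 0\pmod 4$ and intersections are even. For the remaining small weights $w<k$, I expect that, after expressing the $n_i$ in terms of the $\ell-1$ free parameters (with $2\ell+2$ unknowns and $\ell+3$ equations), the design condition forces a Delsarte-type extremality. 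Its effect is that any codeword of weight less than $k$ in $C(\cD)^\perp$ must be absent, giving $d(C(\cD)^\perp)\ge k$.

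\textbf{Step 3: extremality and Assmus--Mattson.} The idea is to show that $C(\cD)$ is self-dual. If $\dim C(\cD)<12\ell$, then $C(\cD)^\perp\supsetneq C(\cD)$ contains a vector of weight less than $k$, or otherwise violates the MacWilliams identities. Once $C(\cD)$ is self-dual doubly even of length $24\ell$ with minimum weight $4\ell+4$, it is extremal. The blocks must then be exactly the minimum weight supports: the weight-$k$ codewords form a $5$-design with the known count, which contains $\cB$, and equality of the counts $|\cB|=b$ follows from the forced value of $\lambda$. Assmus--Mattson then finishes the proof.

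\textbf{Main obstacle.} The hard step is showing that the code generated has no dual words of weight below $k$ and full dimension $12\ell$. The approach I would try first is as follows. Suppose $x\in C(\cD)^\perp$ has small weight $w$. Every block meets $\supp x$ evenly, so counting blocks by $|B\cap \supp x|$ yields another system of equations, valid up to $t$. For $w\le 2t$ this system determines the counts, and these should come out negative or non-integral. For $\ell=3$ the range of $w$ may exceed what $t=5$ controls, so a case analysis or computer verification may be needed.
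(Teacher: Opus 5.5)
There is a genuine gap. Your plan rests on showing that $C(\cD)^\perp$ has no nonzero vectors of weight below $k$. It then also needs self-duality and the fact that the blocks are \emph{all} the minimum-weight supports. The mechanism you propose for the first step does not work, for three reasons.

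First, the system is too small. For $x\in C(\cD)^\perp$ of weight $w$, the unknowns $n_0,\dots,n_{\lfloor w/2\rfloor}$ face only $t+1$ equations. So the counts are determined only for $w\le 2t+1=2\ell+5$, whereas you need every $w\le k-1=4\ell+3$. This already fails for $\ell=2$ (at $w=10,11$), not just for $\ell=3$.

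Second, where the system is determined, the solution need not be negative or non-integral. For $\ell=1$, $\lambda=21$ and $w=6$, the unique solution is $(n_0,n_1,n_2,n_3)=(96,570,90,3)$. What actually rules this case out is that a block $B\supseteq\supp(x)$ makes $\chi_B+x$ a vector of weight $2$ in $C(\cD)^\perp$, which Lemma~\ref{lem:dual} forbids.

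Third, Step~2 never actually determines $\lambda$, and the phrase ``otherwise violates the MacWilliams identities'' in Step~3 is not an argument. Even granting $d(C(\cD)^\perp)\ge k$, both \eqref{C1} and \eqref{C2} would remain to be proved.

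The missing idea is to bound the \emph{intersection sizes} rather than the weight. Choose $u$ as follows:
\begin{itemize}
\item if $C(\cD)\ne C(\cD)^\perp$, a minimum-weight vector in a nontrivial coset of $C(\cD)$ in $C(\cD)^\perp$;
\item otherwise, a minimum-weight codeword of $C(\cD)$ whose support is not a block, which exists if \eqref{C2} or \eqref{C3} fails.
\end{itemize}
For every block $B$, the vector $u+\chi_B$ is nonzero and lies in the same coset. Hence $\wt(u)\le\wt(u)+k-2|B\cap\supp(u)|$, that is, $|B\cap\supp(u)|\le k/2=2(t-1)$, whatever $m=\wt(u)$ is.

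Because $t=\lfloor k/4\rfloor+1$, the equations for $i=1,\dots,t$ then involve only the $t-1$ unknowns $n_1,\dots,n_{t-1}$. They are overdetermined and must satisfy the linear relation of Lemma~\ref{lem:Mendel}, which is the polynomial condition \eqref{eq:mvD} in $m$. For $(v,k)=(24,8),(48,12),(72,16)$ this condition has no integer root with $0<m<v$. For $(24,8)$, for instance, it reduces to $7m^2-98m+344=0$, which has no real root.

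Consequently \eqref{C1}--\eqref{C3} hold simultaneously. $C(\cD)$ is then an extremal doubly even self-dual code whose minimum-weight supports are exactly the blocks, and Assmus--Mattson gives the $5$-design with $\lambda_5=\binom{5\ell-2}{\ell-1}$. On this route $\lambda$ is an output rather than something to pin down first, and the cited saturation results for the $5$-designs are not needed.
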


\begin{rem}\label{rem:24-48}
\begin{enumerate}
\item 
Every self-orthogonal
$3$-$(24,8,\lambda)$ design is isomorphic to the Witt system
on $24$ points.
Every self-orthogonal
$4$-$(48,12,\lambda)$ design is isomorphic to the unique self-orthogonal
$5$-$(48,12,8)$ design.
\item 
The divisibility condition shows that 
none of the $5$-$(24\ell,4\ell+4,\binom{5\ell-2}{\ell-1})$ designs
for $\ell=1,2$ and $3$ is a $6$-design. We refer to the proof of
Theorem~5 in~\cite{BKST} for general $\ell$.
\end{enumerate}
\end{rem}

In Table~\ref{Tab:code}, we give some necessary notation for codes,
which is used throughout this paper.

%%%%%%%%%%%%%%%%%%%%%%%%%%%
\begin{table}[thb]
\caption{Notation for some codes}
\label{Tab:code}
\begin{center}
{\small
%{\footnotesize
%{\scriptsize
\begin{tabular}{c|l}
\noalign{\hrule height0.8pt}
Notation & \multicolumn{1}{c}{Codes} \\
\hline
$e_7$ & the Hamming $[7,4,3]$ code \\
$e_8$ & the extended Hamming $[8,4,4]$ code \\
$R(1,4)$ & the first order Reed--Muller $[16,5,8]$ code \\
$C_{22}$ & the unique (singly even) self-dual $[22,11,6]$ code (\cite{PS75}) \\
$G_{23}$ & the Golay $[23,12,7]$ code \\
$G_{24}$ & the extended Golay $[24,12,8]$ code \\
     %which is obtained by subtraction from the extended Golay
$C_{24}$ & the unique (singly even) self-dual $[24,12,6]$ code (\cite{PS75})\\ 
$C_{32,1}, C_{32,2},\ldots, C_{32,5}$ & the five doubly even 
self-dual $[32,16,8]$ codes (\cite{CPS})\\
$C_{46}$ & the unique (singly even) self-dual $[46,23,10]$ code (\cite{HMT05})\\
$QR_{47}$ & the quadratic residue $[47,24,11]$ code \\
$QR_{48}$ & the extended quadratic residue $[48,24,12]$ code \\
$C_{72}$ &a putative doubly even self-dual $[72,36,16]$ code \\
\noalign{\hrule height0.8pt}
\end{tabular}
}
\end{center}
\end{table}
%%%%%%%%%%%%%%%%%%%%%%%%%%%%%%%%%%%%%%%%%%%%%%%%

Lalaude-Labayle~\cite{Labayle} gave a classification of
self-orthogonal codes of minimum weight $k$
whose codewords of minimum weight form a $t$-design
for the case $t=3$ with $k \le 10$ and the case $t=5$  with $k \le 18$.

\begin{thm}[Lalaude-Labayle~\cite{Labayle}]
\label{thm:Labayle}
Let $C$ be a self-orthogonal code of minimum weight $k$
whose codewords of minimum weight form a $t$-design.
\begin{enumerate}
\item
If $t=3$ and $k \le 10$, then
$C$ is equivalent to one of
$e_8$,
$R(1,4)$,
$(C_{22})_0$,
$C_{22}$, $G_{23}^\perp$, $G_{24}$,
$C_{32,1},C_{32,2},\ldots,C_{32,5}$ and $C_{46}$,
where $(C_{22})_0$ is the doubly even $[22,10,8]$  subcode of $C_{22}$.

\item
If $t=5$ and $k \le 18$, then
$C$ is equivalent to one of
$G_{24}$, $QR_{48}$ and $C_{72}$.
\end{enumerate}
\end{thm}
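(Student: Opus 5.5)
This is a classification result quoted from Lalaude-Labayle~\cite{Labayle}, so the plan follows the standard route of that work. The guiding idea is to turn the design condition into strong arithmetic constraints on the block intersection numbers. Then we bound the length $v$, and finally identify the codes using known classifications of self-orthogonal and self-dual codes.

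\textbf{Setting up.} Let $C$ be self-orthogonal of length $v$ with minimum weight $k$, and let the supports of the weight-$k$ codewords form a $t$-$(v,k,\lambda)$ design $\cD$. Since $C$ is self-orthogonal, $k$ is even. Two distinct blocks $B,B'$ meet in an even number of points $i$, and $i\le k/2$: otherwise $B+B'$ would be a nonzero codeword of weight $2k-2i<k$.

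Fix a block $B$ and let $n_i$ be the number of blocks meeting $B$ in exactly $i$ points. The fundamental (Mendelsohn) equations give, for $0\le j\le t$,
\[
\sum_i \binom{i}{j} n_i=\binom{k}{j}\left(\lambda_j-1\right),
\]
where $\lambda_j=\lambda\binom{v-j}{t-j}/\binom{k-j}{t-j}$. The unknowns are $n_i$ for even $i$ in $[0,k/2]$. For $k\le 10$ this leaves at most three unknowns ($i=0,2,4$), or four for $k=18$ (and fewer in the relevant cases). Hence with $t=3$ or $t=5$ the system is determined, or overdetermined. Solving it expresses each $n_i$ as an explicit rational function of $v$ and $\lambda$.

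\textbf{Bounding $v$.} We require all $n_i\ge0$ and integral, together with the integrality of every $\lambda_j$. Nonnegativity of the top intersection number, or of $n_0$, typically yields an upper bound on $v$ in terms of $k$; this is a Ray-Chaudhuri--Wilson/Fisher-type inequality, sharpened by evenness. A second restriction comes from self-orthogonality: $\dim C\le v/2$, while the block count $b=\lambda\binom{v}{t}/\binom{k}{t}$ is bounded by the number of weight-$k$ words.

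Together these leave a finite list of admissible triples $(v,k,\lambda)$, which should be checked by computer. For instance, with $t=3$ and $k=8$ one expects $v\in\{16,22,24,32\}$, and with $t=5$ one expects $(24,8)$, $(48,12)$, $(72,16)$.

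\textbf{Identification.} For each surviving parameter set, we determine $C$.

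\emph{Small weights.} When $k=4$, the design is a $3$-$(8,4,1)$ design and $C=e_8$. When $k=8$ with $v=16$, we get $R(1,4)$.

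\emph{Self-dual cases.} In the cases where $C$ must be self-dual, because $\dim C$ is forced to be $v/2$ by counting, we use the known classifications. These are: the unique $[22,11,6]$ code $C_{22}$ and its doubly even subcode; $G_{24}$; the five $[32,16,8]$ codes; $C_{46}$; and uniqueness of $QR_{48}$. For $v=72$ the code is the putative $C_{72}$. Weight enumerators forced by the $n_i$, via the Gleason and MacWilliams identities, pin down which code occurs.

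\emph{Odd length.} The length-$23$ case yields $G_{23}^\perp$, obtained by shortening or puncturing $G_{24}$.

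\textbf{Main obstacle.} The hardest step is showing that $C$ attains full dimension, or otherwise controlling codes that are not self-dual, such as $(C_{22})_0$ and $G_{23}^\perp$. For this I would first try the following: bound $\dim C^\perp/C$ using the design property of the minimum words, since the weight-$k$ words span a subcode whose dual distance is at least $t+1$. Then apply the Assmus--Mattson theorem and the weight-enumerator constraints to exclude every other possibility.
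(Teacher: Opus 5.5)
The paper gives no proof of this statement; it is quoted from \cite{Labayle} and used as input (Section~\ref{sec:code} defers the cases $k=8,10,16,18$ to it). Judged on its own, your outline has two genuine gaps.

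\textbf{Spurious parameter sets.} The arithmetic step does not leave the lists you say one expects. The block-intersection system is overdetermined by one equation. Its consistency condition fixes $\lambda_{t_0}$ as a rational function of $v$ (Proposition~\ref{prop:1}), and together with integrality of all $\lambda_j$ it gives $v\in\{16,22,23,24,26,29,30,32\}$ for $k=8$ and $v\in\{26,42,46,50\}$ for $k=10$. For $t=5$ it also admits $(v,k)=(56,16)$ and $(64,16)$. (Incidentally, $k=18$ has five unknowns $n_0,n_2,\dots,n_8$, not four.) So a computer check leaves $3$-$(26,8,28)$, $3$-$(29,8,16)$, $3$-$(30,8,12)$, $3$-$(26,10,3)$, $3$-$(42,10,9)$, $3$-$(50,10,6)$, $5$-$(56,16,42)$ and $5$-$(64,16,91)$, for which, by the theorem, no such code exists. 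Excluding these is the real content of the classification, and your proposal has no tool for it; ``check by computer'' only confirms that they are admissible. The present paper itself disposes only of the first three and relies on the citation for the rest. It uses bounds on the number of weight-$8$ words in self-dual codes of lengths $26$ and $30$, and, for $v=29$, the saturation argument below together with $v$ odd.

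\textbf{Forcing the code.} Your identification step rests on a false claim: counting never forces $\dim C=v/2$. For $(v,k)=(24,8),(32,8),(46,10),(48,12),(72,16)$ one has $b=759,620,1012,17296,249849$, which gives only $\dim C\geq 10,10,10,15,18$. Your proposed repair does not work either. A large minimum weight of $C(\cD)^\perp$ does not bound $\dim C(\cD)^\perp/C(\cD)$: for the design of $R(1,4)$, $C(\cD)^\perp=R(2,4)$ has minimum weight $4$, yet the quotient has dimension $6$. And Assmus--Mattson produces designs from codes with few weights; it cannot prove self-duality.

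The missing idea is the coset-leader argument of Lemma~\ref{lem:BS} and Proposition~\ref{prop:BS}. Suppose $C(\cD)\neq C(\cD)^\perp$, and let $u$ be a minimum-weight vector in a nontrivial coset. For each block $B$ with incidence vector $\chi_B$, $\wt(u)\leq\wt(u+\chi_B)$, so $|B\cap\supp(u)|\leq k/2$. The Mendelsohn system for $u$ then involves only $n_1,\dots,n_{t_0-1}$, and Lemma~\ref{lem:Mendel} forces the Diophantine condition~\eqref{eq:mvD} on $(v,\wt(u))$. For the five pairs above this condition has no solution (Table~\ref{Tab:un}). Hence $C(\cD)$ is self-dual, and $C(\cD)\subseteq C\subseteq C^\perp\subseteq C(\cD)^\perp$ gives $C=C(\cD)$; only then do the classifications of self-dual codes apply.

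For the non-self-dual survivors ($v=16,22,23$ with $k=8$) you still need two things: uniqueness of the designs, and an argument that no self-orthogonal code properly containing $C(\cD)$ keeps minimum weight $8$ with the same weight-$8$ words. Your outline only asserts $R(1,4)$, $(C_{22})_0$ and $G_{23}^\perp$.
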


The following theorem extends the above theorem,
which is one of the main results of this paper.

\begin{main}\label{thm:code}
Suppose that $t \ge \lfloor k/4\rfloor+1$.
Let $C$ be a self-orthogonal code of minimum weight $k$
whose codewords of minimum weight form a $t$-design.
\begin{enumerate}
\item
If $t=2$, then
$C$ is equivalent to one of $e_7^\perp$, $e_8$,
the unique self-orthogonal $[16, 6, 6]$ code,
the unique shortened $[ 21, 10, 6 ]$ code of $C_{22}$ and
$C_{22}$.
\item
If $t=3$, then $C$ is equivalent to one of 
$e_8$,
$R(1,4)$,
$(C_{22})_0$,
$C_{22}$, $G_{23}^\perp$, $G_{24}$,
$C_{32,1},C_{32,2},\ldots,C_{32,5}$ and $C_{46}$,
where $(C_{22})_0$ is the doubly even $[22,10,8]$  subcode of $C_{22}$.
\item
If $t=4$, then
     $C$ is equivalent to one of $G_{23}^\perp$,
     $G_{24}$, $QR_{47}^\perp$ and $QR_{48}$.
\item
      If $t=5$, then $C$ is equivalent to one of $G_{24}$, $QR_{48}$ and $C_{72}$.
\item
If $6 \le t \le 11$, then there is no such code $C$.
\end{enumerate}
 \end{main}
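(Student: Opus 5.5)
Our plan is to reduce the code statement to a classification of self-orthogonal designs and then read the codes off the designs. Let $C$ be as in the statement and let $\cD$ be the design formed by the supports of the minimum weight codewords. Since $C$ is self-orthogonal, $k$ is even and any two blocks meet in an even number of points, so $\cD$ is a self-orthogonal $t$-$(v,k,\lambda)$ design with $k$ even and $C(\cD)\subseteq C$. The main work is to classify all self-orthogonal $t$-$(v,k,\lambda)$ designs with $t\ge\lfloor k/4\rfloor+1\ge2$ and $t\le 11$. The hypothesis $t\ge \lfloor k/4\rfloor+1$ says $k\le 4t+2$ (with $k$ even), so for each $t\le 11$ only finitely many block sizes occur.

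For a fixed $(t,k)$, we bound $v$ first. A $t$-design with $t\ge 2$ satisfies Fisher-type inequalities, and the intersection numbers are even and lie in $\{0,2,\dots,k-2\}$. We would write the fundamental (Mendelsohn) equations for the intersection numbers $n_i$ with a fixed block: $\sum_i \binom{i}{j} n_i=\binom{k}{j}(\lambda_j-1)$ for $j\le t$. Here $\lambda_j=\lambda\binom{v-j}{t-j}/\binom{k-j}{t-j}$. Because $i$ only takes about $k/2\le 2t+1$ even values, the number of unknowns is at most $t+1$ or so. The $t+1$ equations then nearly or completely determine the $n_i$ as functions of $(v,\lambda)$. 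Nonnegativity and integrality of the $n_i$, together with the divisibility conditions on the $\lambda_j$, should leave only finitely many admissible $(v,\lambda)$, and we would enumerate them by computer. For the surviving parameters, the $n_i$ determine the weight enumerator of $C(\cD)$ partially. We then use known classifications: designs with $\lambda=1$ (Steiner systems), the Witt systems, quasi-symmetric designs, and the results of~\cite{HMT05,HKM04,H05} for the $5$-$(24\ell,4\ell+4,\cdot)$ cases, as in Theorem~\ref{thm:24-96}. This should identify each design, including derived and residual designs of the Golay and $QR_{48}$ designs, which give the shortened and punctured codes in the list.

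Next we pass back to codes. Given the design, $C(\cD)$ is determined, and we compute it: for example $e_7^\perp$, $e_8$, $C_{22}$, $G_{23}^\perp$, $G_{24}$, $QR_{47}^\perp$, $QR_{48}$. Then $C$ satisfies $C(\cD)\subseteq C\subseteq C^\perp\subseteq C(\cD)^\perp$, and its minimum weight words are exactly the blocks. We enumerate the self-orthogonal codes between $C(\cD)$ and $C(\cD)^\perp$ that have minimum weight $k$ and no additional weight-$k$ words. This is a small finite check, and it produces the listed codes. Where the design cannot be classified, we rely on external results. For $t=5$, $k=16$ the code must be a putative extremal code $C_{72}$, and the statement lists it as such. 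Theorem~\ref{thm:Labayle} settles the cases $t=3$, $k\le10$ and $t=5$, $k\le18$, so only $k=12,14$ for $t=3$ and the other values of $t$ remain.

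The main obstacle is the case $6\le t\le 11$, where we must show nonexistence, and the large-$k$ cases in general. There the Mendelsohn system has slightly more unknowns than equations, and $v$ is not a priori bounded. We would first try to obtain a bound on $v$ by combining nonnegativity of the $n_i$ with the Ray-Chaudhuri--Wilson bound $b\ge\binom{v}{\lfloor t/2\rfloor}$. We would then use the divisibility of $\lambda_j$ by the relevant binomial quotients and search the finite range by computer. If some parameter set survives, we would use the code side as a fallback. Since $C(\cD)^\perp$ contains $C(\cD)$, the weight enumerator can be constrained via Gleason-type or MacWilliams identities, and a negative coefficient would rule the set out.
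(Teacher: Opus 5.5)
There is a genuine gap, and it sits exactly where the hypothesis $t\ge\lfloor k/4\rfloor+1$ should be used. You drop the fact that $C$ has minimum weight $k$ and set out to classify \emph{all} self-orthogonal $t$-designs, with intersection sizes ranging over $\{0,2,\dots,k-2\}$. That gives $k/2$ unknowns against only $t+1$ equations. Here $k/2$ can be as large as $2t-1$, because the hypothesis means $k\le 4t-2$, not $4t+2$; in particular $t=3$ already forces $k\le 10$, so there are no cases $k=12,14$ for $t=3$. Hence the claim of ``at most $t+1$ or so'' unknowns is wrong.

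More importantly, even where nonnegativity bounds $v$, $\lambda$ is left free. Take $(t,v,k)=(4,47,12)$. The combination from Lemma~\ref{lem:Mendel} only gives $\lambda=36+\frac{12}{5}n_4+12n_5$, where $n_4$ and $n_5$ count the blocks meeting a fixed block in $8$ and $10$ points. Classifying all self-orthogonal designs over such parameter ranges is far beyond known results; Table~\ref{Tab:Sum} even lists parameters whose existence is open. So your step ``use known classifications'' cannot be carried out.

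The missing idea is short. For distinct blocks $B,B'$, the codeword $\chi_B+\chi_{B'}\in C$ has weight $2k-2|B\cap B'|\ge k$, so $|B\cap B'|\le k/2$. Thus only $n_0,\dots,n_{t_0-1}$ and $n_{k/2}=1$ can be nonzero, where $t_0=\lfloor k/4\rfloor+1$. Since $t\ge t_0$, the $t_0+1$ equations in these $t_0$ unknowns are overdetermined. Lemmas~\ref{lem:Mendel} and~\ref{lem:kk} then give $\lambda_{t_0}$ as an explicit rational function of $v$ (Proposition~\ref{prop:1}), and integrality yields the short list~\eqref{eq:list}. In the example above, $n_4=n_5=0$ forces $\lambda=36$, and \cite{HMT05} finishes. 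The same restriction is what makes the $k=6$ designs symmetric or quasi-symmetric, so that those classifications apply.

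Your proposed fallbacks also fall short. Ray-Chaudhuri--Wilson bounds $b$ from below, not $v$ from above. The surviving case $7$-$(120,24,231)$ is excluded only after Proposition~\ref{prop:BS} forces the design to be saturated, i.e.\ to come from an extremal $[120,60,24]$ code. The existence of such a code is open, so weight-enumerator arguments alone cannot rule it out; the paper then invokes \cite{BKST} to exclude a $6$-design. Proposition~\ref{prop:BS} is likewise needed to remove $(v,k)=(9,4),(25,6),(29,8),(51,12)$ and $(24,6)$ from the list.
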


This paper is organized as follows.
%%%
In Section~\ref{sec:pre}, we give definitions, notation
and basic results, and derive lower bounds on the minimum weight
of $C(\cD)^\perp$ using the Mendelsohn equations
for a $t$-$(v,k,\lambda)$ design $\cD$.
%%%
In Section~\ref{sec:4},
we examine the existence of 
self-orthogonal $(\lfloor k/4\rfloor+1)$-$(v,k,\lambda)$ designs
for even $v \le 30$, $k=8,10,12,14$ and $k \le v/2$.
We give strong restrictions on the parameters of 
self-orthogonal designs and
we establish the nonexistence of self-orthogonal designs
for many parameters (Tables~\ref{Tab:Non0} and \ref{Tab:Sum}).
%%%
In Section~\ref{sec:unsaturated},
we give possible parameters of
unsaturated self-orthogonal
$t$-$(v,k,\lambda)$ designs satisfying
$t\geq \lfloor k/4\rfloor+1$ for $v\leq 10000$ and $k=8,10,\ldots,30$
(Table~\ref{Tab:un}).
This is used to show that self-orthogonal designs with certain parameters
are saturated.
As a typical example, we have a proof of
Theorem~\ref{thm:24-96}.
In addition,
the nonexistence of self-orthogonal designs is also established
for many parameters.
In Section~\ref{sec:minwtk}, we study self-orthogonal
designs satisfying~\eqref{C3}.
We give strong restrictions on the parameters of 
self-orthogonal designs satisfying~\eqref{C3} and
$t\geq \lfloor k/4\rfloor+1\geq2$.
This determines possible parameters of
self-orthogonal $t$-$(v,k,\lambda)$ designs with $k$ even
satisfying~\eqref{C3} for $k \le 40$
(Proposition~\ref{prop:2}).
%%%
By investigating self-orthogonal $t$-$(v,k,\lambda)$ designs
with parameters listed in Proposition~\ref{prop:2},
we give a classification of
self-orthogonal $t$-$(v,k,\lambda)$ designs with $k$ even
satisfying~\eqref{C2}, \eqref{C3},
$t\geq \lfloor k/4\rfloor+1\geq2$ and $t \le 11$
in Section~\ref{sec:code}.
Theorem~\ref{thm:code} is obtained as a consequence of
this classification.
As the other consequence, a classification of 
saturated self-orthogonal designs under the
same condition is also obtained (Corollary~\ref{cor:saturated}).

%%%%%%%%%%%%%%%%%%
\section{Preliminaries}\label{sec:pre}

In this section, we give definitions, notation and basic results
used throughout this paper.

\subsection{Self-orthogonal codes}
A (binary) $[n,k]$ \emph{code} $C$ is a $k$-dimensional vector subspace
of $\FF_2^n$,
where $\FF_2$ denotes the finite field of order $2$.
Throughout this paper, all codes are assumed to be binary.
The parameters $n$ and $k$ are called the \emph{length} 
and \emph{dimension} of $C$, respectively.
The \emph{support} $\supp(x)$ of  a vector $x=(x_1,x_2,\ldots,x_n)\in \FF_2^n$ is
$\{i \mid x_i \ne 0\}$ and
the \emph{weight} $\wt(x)$ of $x$ is $|\supp(x)|$.
A vector of $C$ is called a \emph{codeword}.
The minimum nonzero weight of all codewords in $C$ is called
the \emph{minimum weight} of $C$.
An $[n,k,d]$ code is an $[n,k]$ code with minimum weight $d$.
The \emph{weight enumerator} of $C$
is given by the polynomial $\sum_{c \in C}y^{\wt(c)}$.
Two codes are \emph{equivalent} if one can be
obtained from the other by permuting the coordinates.

The \emph{dual} code $C^{\perp}$ of a code
$C$ of length $n$ is defined as
\[
C^{\perp}=
\{x \in \FF_2^n \mid x \cdot y = 0 \text{ for all } y \in C\},
\]
where $x \cdot y$ is the standard inner product.
A code $C$ is called \emph{self-orthogonal}
if $C \subseteq C^\perp$, and $C$ is called \emph{self-dual}
if $C=C^\perp$.
A code is called \emph{doubly even} if all codewords have weight
divisible by $4$.
A doubly even code is self-orthogonal.
It is known that there is a doubly even self-dual code of length $n$
if and only if $n \equiv 0 \pmod 8$.
A self-dual code is called \emph{singly even} if there is
at least one codeword of weight $\equiv 2 \pmod 4$.

Let $\cD$ be a self-orthogonal $t$-$(v,k,\lambda)$ design
with $k$ even.
Let $C(\cD)$ denote the code generated by the rows of an incidence
matrix of $\cD$.
Then $C(\cD)$ is self-orthogonal.
In addition, if $k \equiv 0 \pmod 4$, then
$C(\cD)$ is doubly even.

\subsection{A binomial identity} 

Although the following lemma may be known, we give a proof for completeness.

\begin{lem}\label{lem:kk}
Let $s$ and $j$ be nonnegative integers.
Then
\[
\sum_{i=0}^s (-2)^i\binom{2s-i}{s}\binom{2j+1}{i}
=(-4)^s\binom{j}{s}.
\]
\end{lem}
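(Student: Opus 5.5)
Both sides are polynomial in nothing obvious, so I will prove the identity with generating functions, treating $s$ as fixed and varying $j$ (or the reverse), using coefficient extraction. The plan is to write $\binom{2s-i}{s}=[x^{s-i}](1+x)^{2s-i}$ for $0\le i\le s$. For $s<i\le 2j+1$ this coefficient is zero, since $s-i<0$. Hence the sum may be extended to all $i$, and
\[
\sum_{i\ge 0}(-2)^i\binom{2j+1}{i}[x^{s}]\,x^i(1+x)^{2s-i}
=[x^s](1+x)^{2s}\sum_{i}\binom{2j+1}{i}\Bigl(\frac{-2x}{1+x}\Bigr)^i .
\]
By the binomial theorem this equals
\[
[x^s](1+x)^{2s}\Bigl(\frac{1-x}{1+x}\Bigr)^{2j+1}=[x^s](1+x)^{2s-2j-1}(1-x)^{2j+1}.
\]
This manipulation is legitimate because we work with formal power series in $x$, and $(1+x)^{-1}$ is a genuine power series.

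It then remains to show that
\[
[x^s](1+x)^{2s-2j-1}(1-x)^{2j+1}=(-4)^s\binom{j}{s}.
\]
This is the main obstacle. I would substitute so that the expression becomes a residue and can be symmetrized. Write the coefficient as
\[
\frac{1}{2\pi i}\oint \frac{(1+x)^{2s}}{x^{s+1}}\Bigl(\frac{1-x}{1+x}\Bigr)^{2j+1}dx,
\]
and change variables via $u=\frac{(1+x)^2}{x}$ or $y=\frac{1-x}{1+x}$. With $y=\frac{1-x}{1+x}$ we have $x=\frac{1-y}{1+y}$, $1+x=\frac{2}{1+y}$ and $dx=\frac{-2}{(1+y)^2}dy$. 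The integrand becomes
\[
\frac{2^{2s}(1+y)^{-2s}}{(1-y)^{s+1}(1+y)^{-s-1}}\,y^{2j+1}\,\frac{-2\,dy}{(1+y)^2},
\]
which simplifies to $-2^{2s+1}\,y^{2j+1}(1-y^2)^{-s-1}\,dy$. The point $x=0$ corresponds to $y=1$, so the coefficient is minus the residue at $y=1$ of this form. Equivalently, since the form is odd in $y$, the residues at $y=1$ and $y=-1$ agree, and the total residue (including $\infty$) vanishes.

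A cleaner route avoids residues at $1$. Set $w=y^2$, which gives $y^{2j+1}dy=\tfrac12 w^{j}dw$. The integral then becomes a coefficient extraction in $1-w$:
\[
\operatorname{Res}_{w=1}\, w^j(1-w)^{-s-1}dw=-[ (w-1)^{s}]\,w^j\cdot(-1)^{s+1}=(-1)^s\binom{j}{s},
\]
up to sign bookkeeping. Collecting the factors $2^{2s+1}\cdot\tfrac12$ and the signs should yield $(-4)^s\binom{j}{s}$. I expect the sign and factor-of-$2$ bookkeeping in this change of variables to be the delicate step.

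As a fallback, I would verify the result by induction on $s$, or check small cases ($s=0$ gives $1$; $s=1$ gives $2-2(2j+1)=-4j$). I would also confirm the final coefficient identity directly: expand $(1-x)^{2j+1}(1+x)^{2s-2j-1}$ and use the substitution $x\mapsto$ the Lagrange inversion form. The residue computation is the intended main argument.
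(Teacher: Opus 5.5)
Your route is correct and genuinely different from the paper's. The paper proves the identity by induction on $j$, for all $s$ simultaneously. It writes $\binom{2j+1}{i}=\binom{2j-1}{i}+2\binom{2j-1}{i-1}+\binom{2j-1}{i-2}$, shifts indices, and uses $\binom{2s-i-1}{s}-\binom{2s-i-2}{s}=\binom{2s-i-2}{s-1}$ so that the induction hypothesis applies at $(s,j-1)$ and $(s-1,j-1)$. It then finishes with Pascal's rule.

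Your reduction of the sum to $[x^s](1+x)^{2s-2j-1}(1-x)^{2j+1}$ is valid exactly as written. The substitutions $y=(1-x)/(1+x)$ and $w=y^2$ do evaluate this coefficient, and contour integration is harmless here since everything is rational and holomorphic near the relevant points. What your approach buys is an explanation of where the identity comes from, including why the odd upper index $2j+1$ is essential: $y^{2j+1}\,dy=\tfrac12 w^j\,dw$ descends to the variable $w=y^2$. The paper's argument uses nothing beyond Pascal's rule, but it is a pure verification.

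What you must repair is the sign bookkeeping at the end. As written it contains two errors that happen to cancel.

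First, a residue is invariant under a locally biholomorphic change of variables. The M\"obius map $x\mapsto y$ is holomorphic, hence orientation-preserving, and the Jacobian $-2/(1+y)^2$ is already inside the transformed form. So the coefficient is $+\operatorname{Res}_{y=1}$ of $-2^{2s+1}y^{2j+1}(1-y^2)^{-s-1}\,dy$, not minus it.

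Second, with $u=w-1$ one has $(1-w)^{-s-1}=(-1)^{s+1}u^{-s-1}$. Hence $\operatorname{Res}_{w=1}w^j(1-w)^{-s-1}\,dw=(-1)^{s+1}\binom{j}{s}$, not $(-1)^s\binom{j}{s}$. For instance, $\operatorname{Res}_{w=1}w^2(1-w)^{-3}\,dw=-1$.

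The correct computation is
\[
[x^s](1+x)^{2s-2j-1}(1-x)^{2j+1}
=-2^{2s+1}\cdot\tfrac12\cdot(-1)^{s+1}\binom{j}{s}
=(-4)^s\binom{j}{s}.
\]
Write it out in this form rather than leaving it ``up to sign bookkeeping.''

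A minor point: the form $y^{2j+1}(1-y^2)^{-s-1}\,dy$ is invariant, not odd, under $y\mapsto -y$. Your side remark that the residues at $\pm1$ agree is still true, but it is not needed.
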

\begin{proof}
The assertion is immediate for $s=0$.
We proceed by induction on $j$.
If $j=0$, both sides are equal to $\delta_{s,0}$.
Suppose that $j\geq1$ and that the assertion holds for $j-1$.
For $s\geq1$, Pascal's identity gives
\[
\binom{2s-i-1}{s}-\binom{2s-i-2}{s}
=\binom{2s-i-2}{s-1}
\quad (0\leq i\leq s-1).
\]
Using this identity and the induction hypothesis, we obtain
\begin{align*}
&\sum_{i=0}^s (-2)^i\binom{2s-i}{s}\binom{2j+1}{i}\\
&=\sum_{i=0}^s (-2)^i\binom{2s-i}{s}
\left(
\binom{2j-1}{i}
+2\binom{2j-1}{i-1}
+\binom{2j-1}{i-2}
\right)\\
&=(-4)^s\binom{j-1}{s}
-4\sum_{i=0}^{s-1}(-2)^i
\binom{2s-i-1}{s}\binom{2j-1}{i}\\
&\quad
+4\sum_{i=0}^{s-2}(-2)^i
\binom{2s-i-2}{s}\binom{2j-1}{i}\\
&=(-4)^s\binom{j-1}{s}
-4\sum_{i=0}^{s-1}(-2)^i
\left(
\binom{2s-i-1}{s}-\binom{2s-i-2}{s}
\right)\binom{2j-1}{i}\\
&=(-4)^s\binom{j-1}{s}
-4\sum_{i=0}^{s-1}(-2)^i
\binom{2(s-1)-i}{s-1}\binom{2j-1}{i}\\
&=(-4)^s\left(\binom{j-1}{s}+\binom{j-1}{s-1}\right)\\
&=(-4)^s\binom{j}{s}.
\end{align*}
This completes the proof.
\end{proof}

By putting $t=s+1$, the above lemma gives the following
identity:
\begin{equation}\label{eq:kk}
\sum_{i=1}^{t} i (-2)^{i-1}
\binom{2t-i-1}{t-1}\binom{2j}{i}
=0\ (j=1,2,\ldots,t-1),
\end{equation}
which is used in the proof of Lemma~\ref{lem:Mendel}.

\subsection{Mendelsohn equations} 
\label{sec:Mendelsohn}

Let $\cD=(\cP,\cB)$ be a $t$-$(v,k,\lambda)$ design.
It follows that
every $i$-element subset of points ($i\le t$) is contained in exactly
$\lambda_i$ blocks, where
\begin{equation}\label{eq:lambdass}
\lambda_i=\lambda \frac{\binom{v-i}{t-i}}{\binom{k-i}{t-i}}
=\lambda\frac{(v-i)\cdots(v-t+1)}{(k-i)\cdots(k-t+1)}.
\end{equation}
The number $\lambda_1$ is traditionally denoted by $r$, and the total number of
blocks is $b=\lambda_0$.

Assume that $k$ is even.
We denote the code generated by the rows of an 
incidence matrix of $\cD$ by $C(\cD)$.
Let $x$ be a vector of $C(\cD)^{\perp}$ and
let $S$ be the support.
Suppose that $\wt(x)=m$.
Counting the number of pairs $(I,B)$ with $I\subseteq
S\cap B$, $|I|=i$, $B\in\cB$, we have
\begin{equation}
\label{eq:M0}
\sum_{B\in\cB}\binom{|B\cap S|}{i}=\lambda_i\binom{m}{i}
\ (i=0,1,\ldots,t).
\end{equation}
Let
\[
n_j(x)=|\{B\in \cB\mid 2j=|B\cap S|\}|
\ (j=0,1,\ldots).
\]
Note that we often simply denote it by $n_j$.
Then 
we have the following system of equations:
\begin{equation}
\label{eq:M}
\sum_{j=0}^{\lfloor m/2\rfloor}\binom{2j}{i}n_{j}
=\lambda_{i}\binom{m}{i}
\ (i=0,1,\ldots,t).
\end{equation}
These fundamental equations, 
which are sometimes called Mendelsohn equations~\cite{CW12}
(see also~\cite{M71} and \cite{T86}),
are a powerful tool used throughout this paper.

The following lemma gives a necessary condition for the
solvability of systems of the same form as the Mendelsohn
equations~\eqref{eq:M}.

\begin{lem}\label{lem:Mendel}
Let $b_1,b_2,\ldots,b_t$ be nonnegative integers.
If the following system of equations
\[
\sum_{j=1}^{t-1}
\binom{2j}{i}x_{j}
=b_i
\ (i=1,2,\ldots,t)
\]
has a solution $(x_1,x_2,\ldots,x_{t-1})$,
then 
\[
\sum_{i=1}^t k_i b_i =0, 
\]
where
\begin{equation}\label{eq:kidef}
k_i=i (-2)^{i-1}\binom{2t-i-1}{t-1}.
\end{equation}
\end{lem}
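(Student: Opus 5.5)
Since the system has $t$ equations in only $t-1$ unknowns, the right-hand side vector $(b_1,\ldots,b_t)$ must lie in the column space of the $t\times(t-1)$ coefficient matrix $M=(\binom{2j}{i})_{1\le i\le t,\,1\le j\le t-1}$. I will show that the row vector $(k_1,\ldots,k_t)$ from~\eqref{eq:kidef} annihilates every column of $M$. Then, for any solution $(x_1,\ldots,x_{t-1})$,
\[
\sum_{i=1}^t k_i b_i
=\sum_{i=1}^t k_i\sum_{j=1}^{t-1}\binom{2j}{i}x_j
=\sum_{j=1}^{t-1}x_j\left(\sum_{i=1}^t k_i\binom{2j}{i}\right),
\]
and the claim follows once each inner sum vanishes.

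The key step is therefore the column identity
\[
\sum_{i=1}^{t} i(-2)^{i-1}\binom{2t-i-1}{t-1}\binom{2j}{i}=0
\qquad (j=1,\ldots,t-1).
\]
This is exactly identity~\eqref{eq:kk}, which was derived from Lemma~\ref{lem:kk} with $t=s+1$. Interchanging the two finite sums and invoking~\eqref{eq:kk} for each $j$ then finishes the proof. No integrality or nonnegativity of the $x_j$ is needed, so the statement holds even for rational solutions.

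For completeness, I would also verify how~\eqref{eq:kk} follows from Lemma~\ref{lem:kk}, since this is the only place that needs care. Write $i\binom{2j}{i}=2j\binom{2j-1}{i-1}$, shift $i\mapsto i+1$, and set $s=t-1$. The sum becomes
\[
2j\sum_{i=0}^{s}(-2)^i\binom{2s-i}{s}\binom{2(j-1)+1}{i}.
\]
By Lemma~\ref{lem:kk} applied with $j-1$ in place of $j$, this equals $2j(-4)^s\binom{j-1}{s}$. This vanishes because $0\le j-1<s$ for $1\le j\le t-1$. I expect this reindexing to be the only real obstacle. Once it is checked, the rest of the argument is linear algebra.
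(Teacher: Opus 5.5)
Your proposal is correct and is essentially the paper's proof: substitute the equations, interchange the two finite sums, and kill each inner sum with identity~\eqref{eq:kk}. Your extra check that~\eqref{eq:kk} follows from Lemma~\ref{lem:kk} is also right. Using $i\binom{2j}{i}=2j\binom{2j-1}{i-1}$ and the shift with $s=t-1$ gives $2j(-4)^s\binom{j-1}{s}$, which vanishes since $0\le j-1<s$.
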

\begin{proof}
Using the given equations and~\eqref{eq:kk}, we have
\[
\sum_{i=1}^t k_i b_i
=\sum_{i=1}^t k_i
  \sum_{j=1}^{t-1}\binom{2j}{i}x_j
=\sum_{j=1}^{t-1}x_j
  \sum_{i=1}^t k_i\binom{2j}{i}
=0.
\]
The result follows.
\end{proof}

\subsection{Bounds on minimum weights of $C(\cD)^\perp$}

Throughout this subsection,
let $\cD$ be a $t$-$(v,k,\lambda)$ design with $k$ even.
We give bounds on the minimum weight $d(C(\cD)^\perp)$
of the dual code of $C(\cD)$.
The assertions (i) and  (ii) in the
following lemma were first proved in~\cite{T86} 
for $t=2$ and in~\cite{HMT05} for $t=3$,
respectively.
Our proof is similar to those in~\cite{HMT05} and \cite{T86}.

\begin{lem}\label{lem:dual}
\begin{enumerate}
\item
If $t$ is even with $t \ge 2$, then
$d(C(\cD)^\perp) \ge (t-1) +  \lambda_{t-1}/\lambda$.
\item
If $t$ is odd with $t \ge 3$, then
$d(C(\cD)^\perp) > (t-2) +  \lambda_{t-2}/\lambda_{t-1}$.
\end{enumerate}
\end{lem}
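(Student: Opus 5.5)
The plan is to argue directly by counting blocks through a suitably chosen odd-size subset of the support of a dual codeword. We use only the facts behind the Mendelsohn equations~\eqref{eq:M0}, not the full system of Lemma~\ref{lem:Mendel}. Let $x\in C(\cD)^\perp$ be nonzero with support $S$ and $\wt(x)=m$. Since $x$ is orthogonal to every row of the incidence matrix, $|B\cap S|$ is even for every block $B\in\cB$. This parity constraint is the only input from the code.

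\emph{Step 1: $m\ge t$.} Suppose $m\le t-1$. If $m$ is odd, then $S$ itself is contained in $\lambda_m>0$ blocks, and each of these meets $S$ in exactly $m$ points, which is odd. This is a contradiction. If $m$ is even, choose $T\subseteq S$ with $|T|=m-1$, which is odd. Every block containing $T$ meets $S$ in $m-1$ or $m$ points, and by parity it must meet $S$ in $m$ points. Hence every block through $T$ contains $S$, which gives $\lambda_{m-1}=\lambda_m$. By~\eqref{eq:lambdass} this means $v-m+1=k-m+1$, contradicting $k<v$. So $m\ge t$.

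\emph{Step 2 (case (i), $t$ even).} Now $t-1$ is odd and $t-1<m$, so we may fix $T\subseteq S$ with $|T|=t-1$. Each of the $\lambda_{t-1}$ blocks containing $T$ meets $S$ in an even number of points, hence in at least $t$ points. So each such block contains at least one point $q\in S\setminus T$. Double count the pairs $(B,q)$ with $T\cup\{q\}\subseteq B$ and $q\in S\setminus T$. For each $q$ there are exactly $\lambda_t=\lambda$ such blocks, so
\[
\lambda_{t-1}\le \lambda\,(m-t+1),
\]
that is, $m\ge (t-1)+\lambda_{t-1}/\lambda$.

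\emph{Step 3 (case (ii), $t$ odd).} Take $T\subseteq S$ with $|T|=t-2$, which is odd. Each block through $T$ meets $S$ in at least $t-1$ points. Counting the pairs $(B,q)$ as in Step 2, now with $\lambda_{t-1}$ blocks through each $T\cup\{q\}$, gives
\[
\lambda_{t-2}\le \lambda_{t-1}(m-t+2).
\]
The step needing care is the strictness, so suppose equality holds. Then every block through $T$ meets $S\setminus T$ in exactly one point. But $m-t+2\ge 2$ by Step 1, so we can pick two distinct points $q,q'\in S\setminus T$. The set $T\cup\{q,q'\}$ has size $t$ and so lies in $\lambda>0$ blocks, each meeting $S\setminus T$ in at least two points. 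This is a contradiction, so the inequality is strict and $m>(t-2)+\lambda_{t-2}/\lambda_{t-1}$.

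Since $x$ was an arbitrary nonzero codeword of $C(\cD)^\perp$, both bounds hold for $d(C(\cD)^\perp)$.
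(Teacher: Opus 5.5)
Your proof is correct, and it takes a different, more elementary route than the paper.

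The paper works globally. For $t=2e$ it takes the Mendelsohn equations~\eqref{eq:M} with $i=t-1$ and $i=t$ and combines them into the identity $m(m-1)\cdots(m-2e+2)\bigl((m-2e+1)\lambda-\lambda_{2e-1}\bigr)=\sum_{j\ge e+1}2j(2j-1)\cdots(2j-2e+2)(2j-2e)\,n_j\ge 0$. It imports the positivity $\sum_{j\ge\lfloor (t+1)/2\rfloor}n_j>0$ from~\cite[(3.2)]{HMT05}. That positivity supplies both $m\ge t$ and the strictness in the odd case, where the design is treated as a $(t-1)$-design and equality holds exactly when $\sum_{j\ge e+1}n_j=0$.

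You instead fix one odd-size subset $T\subseteq S$ and double count locally. Your count is in fact an exact identity, $\sum_{B\supseteq T}(|B\cap S|-t)=\lambda(m-t+1)-\lambda_{t-1}$. Summing it over all $(t-1)$-subsets $T$ of $S$ recovers the paper's identity, up to a factor $(t-1)!$. So your argument is a pointwise refinement of the same count.

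What your version buys is self-containedness. The parity argument in your Step~1 and the two-point argument for strictness replace the citation to~\cite{HMT05}. Nothing is used beyond $|B\cap S|$ being even and the values $\lambda_i$ from~\eqref{eq:lambdass}.

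What the paper's version buys is an exact identity in the variables $n_j$ with a clean equality criterion. It fits the Mendelsohn-equation framework used throughout the rest of the paper, and it makes the odd case a one-line reduction to the even case.
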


\begin{proof}
The system of equations~\eqref{eq:M} has a solution if we take
$m=d(C(\cD)^\perp)$.
It follows from~\cite[(3.2)]{HMT05} that
\begin{equation}
\label{eq:sum-positive}
\sum_{j=\lfloor \frac{t+1}{2}\rfloor}^{\lfloor \frac{m}{2}\rfloor} n_{j}>0.
\end{equation}

%%%%%%%%%%%%%%%%%
Suppose that $t=2e$, where $e$ is a positive integer.
From \eqref{eq:sum-positive}, we have $m\geq2e$.
Hence, the Mendelsohn equations~\eqref{eq:M} yield the following:
\begin{align*}
\sum_{j=e}^{\lfloor \frac{m}{2}\rfloor} 
2j(2j-1)\cdots(2j-(2e-2))
n_{j} &= m(m-1)\cdots(m-(2e-2)) \lambda_{2e-1}, \\
\sum_{j=e}^{\lfloor \frac{m}{2}\rfloor} 
2j(2j-1)\cdots(2j-(2e-1)) n_{j} &= m(m-1)\cdots(m-(2e-1)) \lambda.
\end{align*}
Hence, we have
\begin{multline*}
m(m-1)\cdots (m-(2e-2))((m-(2e-1))\lambda -  \lambda_{2e-1})
\\
= \sum_{j=e+1}^{\lfloor \frac{m}{2}\rfloor} 
2j(2j-1)\cdots(2j-(2e-2))(2j-2e) n_{j}  \ge 0,
\end{multline*}
where equality holds if and only
if $\sum_{j=e+1}^{\lfloor \frac{m}{2}\rfloor} n_{j} = 0$.
Thus, 
$
m \ge (2e-1) +  \lambda_{2e-1}/\lambda.
$ 
This completes the proof of the assertion (i).

Now suppose that $t=2e+1$, where $e$ is a positive integer.
Since a $t$-$(v,k,\lambda)$ design is a 
$(t-1)$-$(v,k,\lambda_{t-1})$ design,
the above discussion given in the proof of the assertion (i) can be applied.
Thus, it follows that
$d(C(\cD)^\perp) \ge (t-2) +  \lambda_{t-2}/\lambda_{t-1}$, where
equality holds if and only if 
$\sum_{j=e+1}^{\lfloor m/2\rfloor} n_{j} = 0$.
It follows from~\eqref{eq:sum-positive} that $\sum_{j=e+1}^{\lfloor m/2\rfloor} n_{j} > 0$.
The assertion (ii) follows.
\end{proof}

%%%%%%%%%%%%%%%%%%%%%%%%%%%%%%%%%%%%%%%%%%%
\section{Self-orthogonal $t$-$(v,k,\lambda)$ designs with $v$ even}
\label{sec:4}

Throughout this section, we assume that $v$ is even with
$v \le 30$.
We examine the existence of 
self-orthogonal $(\lfloor k/4 \rfloor+1)$-$(v,k,\lambda)$ designs
for $k=8,10,12,14$ and $k \le v/2$.

\begin{lem}\label{3-des-v-2k}
Let $\cD$ be a self-orthogonal $3$-$(v,k,\lambda)$ design such that
$k$ is even and $k \ge 4$.
If there is a vector of weight $4$ in $C(\cD)^{\perp}$,
then $v=2k$.
\end{lem}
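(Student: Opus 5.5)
The plan is to apply the Mendelsohn equations~\eqref{eq:M} to the given vector $x$ of weight $m=4$ in $C(\cD)^\perp$, with support $S$. Since $x\in C(\cD)^\perp$, every block meets $S$ in an even number of points, so only $n_0,n_1,n_2$ (intersections of size $0,2,4$) can be nonzero. Because $t=3$, we get four equations, for $i=0,1,2,3$:
\begin{align*}
n_0+n_1+n_2&=b,\\
2n_1+4n_2&=4r,\\
n_1+6n_2&=6\lambda_2,\\
4n_2&=4\lambda.
\end{align*}

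Next I solve this system from the bottom up. The last equation gives $n_2=\lambda$. The third then gives $n_1=6\lambda_2-6\lambda$. Substituting into the second yields the consistency condition
\[
r=3\lambda_2-2\lambda .
\]
The first equation only determines $n_0$, so it imposes nothing further.

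Finally, I express everything through~\eqref{eq:lambdass}. We have $\lambda_2=\lambda(v-2)/(k-2)$ and $r=\lambda_2(v-1)/(k-1)$. Here $k\ge 4$ guarantees $k-2\neq 0$, so we may divide. Dividing the consistency condition by $\lambda_2$ gives
\[
\frac{v-1}{k-1}=3-\frac{2(k-2)}{v-2}.
\]
Clearing denominators, this becomes $(v-1)(v-2)=3(k-1)(v-2)-2(k-1)(k-2)$. Expanding collapses it to $v^2-3kv+2k^2=0$, that is,
\[
(v-k)(v-2k)=0 .
\]
Since $v>k$ by the standing assumption, we conclude $v=2k$.

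There is no real obstacle here. The only point requiring care is the algebraic simplification of the quadratic, and I expect it to factor cleanly as above.
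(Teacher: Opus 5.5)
Your proposal is correct and is the same argument as the paper's. Both apply the Mendelsohn equations for $i=1,2,3$ to the weight-$4$ vector, determine $n_2=\lambda$ and $n_1=6\lambda_2-6\lambda$, and reduce the remaining equation to $(v-k)(v-2k)=0$, so $v=2k$ because $v>k$.
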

\begin{proof}
Let $x$ be a vector of weight $4$ in $C(\cD)^{\perp}$.
From the Mendelsohn equations~\eqref{eq:M}, we have
\begin{align*}
2n_1 +4n_2 &= 4 \lambda\frac{(v-1)(v-2)}{(k-1)(k-2)}, \\
n_1 +6n_2 &= 6 \lambda\frac{v-2}{k-2}\text{ and} \\
4n_2 &= 4 \lambda.
\end{align*}
This implies that $(v-2k)(v-k)=0$.  The result follows.
\end{proof}

Let $d_{\max}(v)$ denote the largest minimum weight
among all self-dual codes of length $v$ (see~\cite[Table~I]{C-S}
for $d_{\max}(v)$ $(v \le 30)$).

\begin{prop}\label{prop:nonex1}
There is no self-orthogonal $3$-$(v,k,\lambda)$ design
for 
\[
(v,k)=(18,  8)\text{ and }(20,  8).
\]
\end{prop}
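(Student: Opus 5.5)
The plan is to trap a self-dual code between $C(\cD)$ and $C(\cD)^\perp$, and then show that its minimum weight would exceed $d_{\max}(v)$.

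Suppose $\cD$ is a self-orthogonal $3$-$(v,8,\lambda)$ design with $v\in\{18,20\}$. Since $k=8$ is even, $C=C(\cD)$ is self-orthogonal; in fact it is doubly even. Every block has even size, so the all-one vector $\allone$ lies in $C^\perp$. Because $v$ is even and $C\subseteq C^\perp$, there is a self-dual code $D$ with $C\subseteq D\subseteq C^\perp$. To build it, extend $C$ step by step inside $C^\perp$ by even-weight vectors until the dimension reaches $v/2$; this is the standard argument, using that $\allone\in C^\perp$ and that $C^\perp$ contains the required even vectors. Every nonzero codeword of $D$ is a nonzero vector of $C^\perp$, so $d(D)\ge d(C^\perp)$. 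Moreover $D$ is self-dual, so all its weights are even.

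Next I bound $d(C^\perp)$ from below.
\begin{enumerate}
\item Apply Lemma~\ref{lem:dual}(ii) with $t=3$. By \eqref{eq:lambdass}, $\lambda_1/\lambda_2=(v-1)/(k-1)$. This gives
\[
d(C^\perp) > 1+\frac{v-1}{7},
\]
which is $1+17/7$ for $v=18$ and $1+19/7$ for $v=20$. In both cases $d(C^\perp)\ge 4$.
\item Lemma~\ref{3-des-v-2k} says that a weight-$4$ vector in $C^\perp$ forces $v=2k=16$. This is excluded, so $d(C^\perp)\ge 5$.
\end{enumerate}

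Since $D$ has only even weights, $d(D)\ge 6$. However, by \cite[Table~I]{C-S}, $d_{\max}(18)=d_{\max}(20)=4$. This contradiction rules out both parameter sets.

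The step needing the most care is the existence of the self-dual code $D$ sandwiched between $C$ and $C^\perp$. I would justify it by noting that $C^\perp/C$ carries a nondegenerate symplectic-type form. Because $\allone\in C^\perp$, the quadratic obstruction disappears, so a maximal totally isotropic subspace gives a self-dual $D$. The remaining steps are direct substitutions into the cited lemmas.
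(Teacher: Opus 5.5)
Your proposal is correct and follows the paper's proof: Lemma~\ref{lem:dual}(ii) gives $d(C(\cD)^\perp)\ge 4$, Lemma~\ref{3-des-v-2k} rules out weight $4$ because $v\neq 2k$, and a self-dual code $D$ with $C(\cD)\subseteq D\subseteq C(\cD)^\perp$ then cannot have minimum weight $d_{\max}(v)=4$. The paper only orders this contrapositively ($D$ must be an $[v,v/2,4]$ code, so $C(\cD)^\perp$ has a weight-$4$ vector, forcing $v=2k=16$). Two minor remarks: the self-dual extension needs only that $C(\cD)$ is self-orthogonal and $v$ is even, since $\allone\in C(\cD)^\perp$ holds automatically and the parity of $v$ does the work; and $d(C(\cD)^\perp)\ge 5>4$ already gives the contradiction without passing to $6$.
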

\begin{proof}
Suppose that $\cD$ is a self-orthogonal
$3$-$(v,k,\lambda)$ design with
$(v,k)=(18,8)$ or $(20,8)$.
By Lemma~\ref{lem:dual} (ii) and $d_{\max}(18)=d_{\max}(20)=4$, 
$C(\cD)$ is contained in some self-dual code with
parameters $[18,9,4]$ and $[20,10,4]$, respectively.
Thus, there is a vector of weight $4$ in $C(\cD)^{\perp}$.
The result follows from Lemma \ref{3-des-v-2k}.
\end{proof}

%%%%%%%%%%%%%%%%%%%%%%%%%%%
\begin{table}[thb]
\caption{Self-orthogonal
$(\lfloor\frac{k}{4}\rfloor+1)$-$(v,k,\lambda)$ designs ($v \le 30, k=8,10,12,14$)}
\label{Tab:Non0}
\begin{center}
{\small
%{\footnotesize
%{\scriptsize
\begin{tabular}{c|c|c|c|rl|c}
\noalign{\hrule height0.8pt}
$(t,v,k)$ & $\tau$ & $\mu_{\max}$& $d(\cD)$ 
& \multicolumn{2}{c|}{$A_{(v,k)}$}
& \multicolumn{1}{c}{$\mu_{BI}$}\\
\hline
$(3, 16,  8)$&   3 & 6& 4      &  198 & (\cite{Pless72})& \\
$(3, 22,  8)$&   6 & 2& 6   &  330 & (\cite{PS75})&  $1$\\
$(3, 24,  8)$&  21 & 1& 6, 8   &  759 &(\cite{PS75})&  \\
$(3, 26,  8)$&   7 & 1& 6      &  390 &(\cite{CPS})& $1$\\
$(3, 28,  8)$&  42 & 0& 6      &  442 &(\cite{CPS})&  \\
$(3, 30,  8)$&   6 & 0& 6      &  393 &(\cite{CPS})& \\
$(3, 20, 10)$&   4 &13& 4      &  512 &(\cite{Pless72})&  $1,2,3,5,6,7,9,10,11,13$\\
$(3, 22, 10)$&   6 & 9& 4, 6   &  700 &(\cite{PS75})&  $1,3,5,7,9$\\
$(3, 24, 10)$& 180 & 0& 4, 6, 8&  960  &(\cite{PS75})& \\
$(3, 26, 10)$&   3 &20& 4, 6   & 1313 &(\cite{CPS})& \\
$(3, 28, 10)$&  20 & 2& 6   & 1624 &(\cite{CPS})& $1,2$\\
$(3, 30, 10)$&  18 & 3& 6   & 1848 &(\cite{CPS})& $1,3$\\
$(4, 24, 12)$&  15 & 8& 6, 8   & 2576 &(\cite{PS75})& $1,2,\ldots,7$\\
$(4, 26, 12)$& 495 & 0& 6      & 2340 &(\cite{CPS})& \\
$(4, 28, 12)$&  99 & 0& 6      & 3717 &(\cite{CPS})& \\
$(4, 30, 12)$&  55 & 1& 6      & 5320 &(\cite{CPS})& $1$\\
$(4, 28, 14)$&  66 & 3& 6      & 5020 &(\cite{CPS})& $1,2,3$\\
$(4, 30, 14)$& 143 & 2& 6      & 8931 &(\cite{CPS})& \\
\noalign{\hrule height0.8pt}
\end{tabular}
}
\end{center}
\end{table}
%%%%%%%%%%%%%%%%%

Let $\cD=(X,\cB)$ be a self-orthogonal $t$-$(v,k,\lambda)$ design with $k$ even.
Then $C(\cD)$ is a self-orthogonal code containing
at least $|\cB|$ codewords of weight $k$.
If $v$ is even, then
$C(\cD)$ is contained in some self-dual $[v,v/2,d]$ code
containing at least $|\cB|$ codewords of weight $k$.
The possible minimum weights $d$ are determined
by Lemma~\ref{lem:dual} and $d_{\max}(v)$.
We denote by $d(\cD)$ the set of the possible minimum weights $d$.
Let $A_{(v,k,d)}$ denote the largest number of codewords of weight $k$ among
self-dual $[v,v/2,d]$ codes.
Let
\[
A_{(v,k)} = \max\{A_{(v,k,d)} \mid d \in d(\cD)\}.
\]
Since $|\cB| \le A_{(v,k)}$,
we have
$\lambda \le 
\lfloor A_{(v,k)}\binom{k}{t}/\binom{v}{t}\rfloor$.
Let $\tau$ be the least positive integer such that
\[
\tau\frac{\binom{v-i}{t-i}}{\binom{k-i}{t-i}}
\]
is an integer for every $i=0,1,\ldots,t$.
Then \eqref{eq:lambdass} implies that $\tau$ divides $\lambda$.
Let
\[
\mu_{\max}=
\left\lfloor
\frac{A_{(v,k)}\binom{k}{t}}{\binom{v}{t} \tau}
\right\rfloor.
\]
Of course, 
if there is a self-orthogonal $t$-$(v,k,\mu \tau)$ design,
then $\mu \le \mu_{\max}$.
In Table~\ref{Tab:Non0}, we list 
$\tau$, $\mu_{\max}$ and $d(\cD)$
for $v \le 30$, $k=8,10,12,14$ and $k \le v/2$, except
$(v,k)=(18,  8)$ and $(20,  8)$.
We also list $A_{(v,k)}$ along with references.
The column  ``$\mu_{BI}$''  is explained below.

Let $\cD=(X,\cB)$ be a self-orthogonal $t$-$(v,k,\lambda)$ design with $k$ even.
Let $B$ be a block of $\cD$.
Let
\[
m_{2j}=|\{B'\in \cB\setminus\{B\} \mid 2j=|B\cap B'|\}|
\ \left(j=0,1,\ldots,\left\lfloor\frac{k}{2}\right\rfloor-1 \right).
\]
From the Mendelsohn equations~\eqref{eq:M},
we have the following system of equations:
\begin{equation}\label{eq:BI}
\sum_{i=0}^{\lfloor \frac{k}{2}\rfloor-1}\binom{2i}{j}m_{2i}
=(\lambda_{j}-1)\binom{k}{j}
\ (j=0,1,\ldots,t).
\end{equation}
Since each $m_{2i}$ counts blocks, \eqref{eq:BI} must have
a solution in nonnegative integers.
For each $(t,v,k)$ in Table~\ref{Tab:Non0},
we consider $\lambda=\tau\mu$ with $1\leq\mu\leq\mu_{\max}$.
The column ``$\mu_{BI}$''  lists the values of $\mu$ for which \eqref{eq:BI} 
has no solution in nonnegative integers.
Thus, no self-orthogonal $t$-$(v,k,\tau\mu)$ design exists
for any value of $\mu$ listed in this column.
For example, when $(t,v,k)=(3,22,8)$, we have
$\tau=6$ and $\mu_{\max}=2$.
From~\eqref{eq:BI} we have $m_6=2\mu-4$,
so $\mu=1$ is excluded by the condition $m_6\geq0$.
Hence, the only possible $\lambda$ is $12$.

\begin{prop}\label{prop:nonex2}
There is no self-orthogonal $t$-$(v,k,\lambda)$ design
for
\[
(t,v,k)=
(3, 28,  8),
(3, 30,  8),
(3, 24, 10),
(4, 26, 12) \text{ and }
(4, 28, 12).
\]
\end{prop}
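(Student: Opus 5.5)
These five parameter sets are exactly the rows of Table~\ref{Tab:Non0} with $\mu_{\max}=0$. I will argue directly from the definitions of $\tau$ and $\mu_{\max}$. The argument has three steps.

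First, suppose $\cD=(X,\cB)$ is a self-orthogonal $t$-$(v,k,\lambda)$ design with $(t,v,k)$ one of the listed triples. Since $v$ is even, $C(\cD)$ is self-orthogonal and lies in some self-dual $[v,v/2,d]$ code $C$. Because $C(\cD)\subseteq C$, we have $C^\perp=C\subseteq C(\cD)^\perp$. Hence $d$ is bounded below by Lemma~\ref{lem:dual} applied to $\cD$, and bounded above by $d_{\max}(v)$. This places $d$ in the set $d(\cD)$ recorded in the table: $\{6\}$ in each case except $(3,24,10)$, where $d\in\{4,6,8\}$. Every block is the support of a weight-$k$ codeword of $C$, and there are no repeated blocks, so
\[
|\cB|\le A_{(v,k)}=\max\{A_{(v,k,d)}\mid d\in d(\cD)\}.
\]
The values $A_{(v,k)}$ are obtained from the classifications of self-dual codes of these lengths in \cite{PS75,CPS}. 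For each relevant $d$, one takes the maximum number of weight-$k$ codewords over all codes in the classification.

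Second, I combine this with divisibility. By \eqref{eq:lambdass}, $\tau\mid\lambda$, so $\lambda\ge\tau$ and
\[
|\cB|=\lambda\binom{v}{t}\Big/\binom{k}{t}\ge \tau\binom{v}{t}\Big/\binom{k}{t}.
\]
For each triple I then check that this lower bound exceeds $A_{(v,k)}$, which is the same as checking $\mu_{\max}=0$. For example, for $(3,28,8)$ we have $\tau=42$, and the bound gives $|\cB|\ge 42\cdot 3276/56=2457>442$. For $(3,24,10)$ we have $\tau=180$, giving $|\cB|\ge 180\cdot 2024/120=3036>960$. The remaining three cases, $(3,30,8)$, $(4,26,12)$ and $(4,28,12)$, are checked in the same way. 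Each such check contradicts $|\cB|\le A_{(v,k)}$, which proves the proposition.

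The main obstacle is certifying the values $A_{(v,k)}$, that is, the maximum number of weight-$k$ codewords over all self-dual codes of length $v$ with minimum weight in $d(\cD)$. This relies on the complete classifications of self-dual codes of lengths up to $30$ together with their weight enumerators. In fact, since the margins are large, a crude upper bound on $A_{(v,k,d)}$ taken from the possible weight enumerators would already suffice. Computing $\tau$ and confirming $d(\cD)$ via Lemma~\ref{lem:dual} are routine.
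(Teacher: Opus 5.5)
Your proposal is correct and is essentially the paper's own argument. The paper's proof only observes that Table~\ref{Tab:Non0} gives $\mu_{\max}=0$ for these five triples, and you have unpacked what that means: $\tau\mid\lambda$ forces $|\cB|\geq\tau\binom{v}{t}/\binom{k}{t}>A_{(v,k)}$, which exceeds the number of weight-$k$ codewords in any admissible self-dual code containing $C(\cD)$.

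One minor caution about your closing remark on crude bounds. For $(3,30,8)$ the margin is not large ($435$ versus $393$). For self-dual $[30,15,6]$ codes, Gleason's theorem alone only gives $A_8=450-3A_6\leq 447$. A crude bound there must therefore already use the shadow constraints, which force $A_6\geq 19$ and hence $A_8\leq 393$, or else the actual classification.
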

\begin{proof}
For each of the listed parameter sets,
Table~\ref{Tab:Non0} gives $\mu_{\max}=0$.
However, the existence of such a design requires
$\lambda=\tau\mu$ for a positive integer $\mu$
satisfying $\mu\leq\mu_{\max}$, a contradiction.
\end{proof}

\begin{prop}\label{prop:nonex3}
There is no self-orthogonal $t$-$(v,k,\lambda)$ design
for
\[
(t,v,k)=
(3, 26,  8),
(3, 28, 10),
(4, 30, 12) \text{ and }
(4, 28, 14).
\]
\end{prop}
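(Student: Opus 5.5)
The plan is to read off Table~\ref{Tab:Non0} for each of the four parameter sets $(t,v,k)=(3,26,8)$, $(3,28,10)$, $(4,30,12)$ and $(4,28,14)$, and compare two columns: the range $1\le\mu\le\mu_{\max}$ and the column $\mu_{BI}$. As explained before Proposition~\ref{prop:nonex2}, any self-orthogonal $t$-$(v,k,\lambda)$ design has $\lambda=\tau\mu$ with $\tau\mid\lambda$ and $1\le\mu\le\mu_{\max}$, since $|\cB|\le A_{(v,k)}$. In each of the four rows, $\{1,\ldots,\mu_{\max}\}$ is contained in the list $\mu_{BI}$:
\begin{itemize}
\item for $(3,26,8)$, $\mu_{\max}=1$ and $\mu_{BI}\ni 1$;
\item for $(3,28,10)$, $\mu_{\max}=2$ and $\mu_{BI}=\{1,2\}$;
\item for $(4,30,12)$, $\mu_{\max}=1$ and $\mu_{BI}\ni1$;
\item for $(4,28,14)$, $\mu_{\max}=3$ and $\mu_{BI}=\{1,2,3\}$.
\end{itemize}
So every admissible $\lambda$ is excluded, because for that $\lambda$ the block intersection system~\eqref{eq:BI} has no solution in nonnegative integers. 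Since the $m_{2i}$ count blocks for a fixed block $B$, no design can exist. The proof is therefore a direct combination of the $\mu_{\max}$ bound and the nonsolvability recorded in $\mu_{BI}$.

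To make the argument checkable, I would verify each entry in three steps.
\begin{enumerate}
\item Compute $\tau$ from the integrality of $\lambda_i$ in~\eqref{eq:lambdass} for $i=0,\ldots,t$.
\item Compute $\mu_{\max}$ from $A_{(v,k)}$, using the cited counts of weight-$k$ codewords in self-dual codes whose minimum weights lie in $d(\cD)$. Here $d(\cD)$ comes from Lemma~\ref{lem:dual} together with $d_{\max}(v)$.
\item For each relevant $\mu$, solve the linear system~\eqref{eq:BI}.
\end{enumerate}
For $t=3$ the unknowns in~\eqref{eq:BI} are $m_0,m_2,\ldots$, with $\lfloor k/2\rfloor$ unknowns against $t+1$ equations. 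For $k=8$ the unknowns are $m_0,m_2,m_4,m_6$ and there are four equations, so the system determines them uniquely. In that case I would exhibit the unique solution and show that some $m_{2i}$ is negative or nonintegral, as in the worked $(3,22,8)$ example where $m_6=2\mu-4$.

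For the larger $k$ the system is underdetermined: for $k=10$ there are 5 unknowns and 4 equations, and for $k=12,14$ with $t=4$ there are 6 or 7 unknowns and 5 equations. Here I would express the solution family in terms of the free parameters (e.g.\ the top $m_{2i}$). I would then show that the integrality constraints, combined with $m_{2i}\ge 0$, force a contradiction. One route is a parity or divisibility condition on a combination of the equations, in the spirit of Lemma~\ref{lem:Mendel}: eliminating variables yields an identity whose right-hand side is not an integer multiple of the required coefficient.

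The main obstacle I expect is these underdetermined cases, especially $(4,28,14)$ with three values of $\mu$. There a simple sign argument may not suffice, and one must check carefully that no nonnegative integer point exists, possibly by a small exhaustive search over the bounded free variables. The bounds $m_{2i}\le b-1$ keep that search finite.
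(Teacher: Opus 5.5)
Your proposal is correct and is essentially the paper's proof. The paper likewise observes from Table~\ref{Tab:Non0} that every $\mu$ with $1\le\mu\le\mu_{\max}$ appears in the column $\mu_{BI}$, so \eqref{eq:BI} has no nonnegative integer solution for any admissible $\lambda=\tau\mu$; your three-step plan for recomputing $\tau$, $\mu_{\max}$ and the nonsolvability of \eqref{eq:BI} just unpacks how those table entries are obtained.
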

\begin{proof}
For each of the listed parameter sets, Table~\ref{Tab:Non0}
shows that \eqref{eq:BI} has no nonnegative integer solution
for any $\lambda=\tau\mu$ with $1\leq\mu\leq\mu_{\max}$.
Since these are all possible values of $\lambda$,
the result follows.
\end{proof}

\begin{prop}
\begin{enumerate}
\item
There is a self-orthogonal $3$-$(22,8,\lambda)$ design
if and only if $\lambda=12$.
\item
There is a self-orthogonal $3$-$(24,  8, \lambda)$ design
if and only if $\lambda=21$.
\item
There is a self-orthogonal $4$-$(24,12,\lambda)$ design
if and only if $\lambda=120$.
\end{enumerate}

\bigskip
In each case, the design is unique, up to isomorphism.
\end{prop}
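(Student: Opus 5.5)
The three parts go through the same two steps. First, the table data fix the only possible $\lambda$. Second, we locate $C(\cD)$ inside a self-dual code whose weight-$k$ codewords are forced to be exactly the blocks.

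For the first step we read Table~\ref{Tab:Non0}.

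For $(3,22,8)$ we have $\tau=6$ and $\mu_{\max}=2$. The system \eqref{eq:BI} rules out $\mu=1$, as computed in the text, so $\lambda=12$.

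For $(3,24,8)$ we have $\tau=21$ and $\mu_{\max}=1$, so $\lambda=21$.

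For $(4,24,12)$ we have $\tau=15$ and $\mu_{\max}=8$. The column $\mu_{BI}$ excludes $\mu=1,\ldots,7$, leaving $\lambda=120$.

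In each case the number of blocks equals $A_{(v,k)}$:
\begin{itemize}
\item $b=12\binom{22}{3}/\binom{8}{3}=330$;
\item $b=21\binom{24}{3}/\binom{8}{3}=759$;
\item $b=120\binom{24}{4}/\binom{12}{4}=2576$.
\end{itemize}
Since $v$ is even, $C(\cD)$ lies in some self-dual $[v,v/2,d]$ code $C$ with $d\in d(\cD)$. This $C$ contains at least $b$ codewords of weight $k$. Because $b=A_{(v,k)}$ is the maximum, $C$ must attain $A_{(v,k,d)}=A_{(v,k)}$, and the blocks are exactly the supports of all weight-$k$ codewords of $C$.

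For uniqueness we identify $C$ from the cited classifications \cite{PS75}.

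For $v=22$, the maximum $330$ is attained only by $C_{22}$ with $d=6$. The maximum is not attained by any code with $d=4$; this has to be checked from the classification. So $\cD$ is the design formed by the weight-$8$ codewords of $C_{22}$.

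For $v=24$, $k=8$, the value $759$ forces $C=G_{24}$, and $\cD$ is the Witt design. This is consistent with the possible values $d\in\{6,8\}$.

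For $v=24$, $k=12$, the value $2576$ is the number of weight-$12$ codewords of $G_{24}$. We have to verify that no self-dual $[24,12,6]$ code, namely $C_{24}$, reaches $2576$. This again comes from its weight enumerator.

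Existence follows from the converse direction. By the Assmus--Mattson theorem:
\begin{itemize}
\item the weight-$8$ codewords of $C_{22}$ form a $3$-$(22,8,12)$ design;
\item the weight-$8$ and weight-$12$ codewords of $G_{24}$ form $5$-designs, hence a $3$-$(24,8,21)$ design and a $4$-$(24,12,120)$ design.
\end{itemize}
These designs are self-orthogonal because the codes are self-orthogonal with even $k$.

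Uniqueness up to isomorphism then follows from the uniqueness of $C$ up to equivalence, since $\cD$ is determined by $C$.

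The main obstacle is the uniqueness claim: showing that the maximum $A_{(v,k)}$ is attained by a unique self-dual code. This requires inspecting the weight enumerators of all self-dual codes of lengths $22$ and $24$ with the relevant minimum weights from \cite{PS75}, and confirming that the extremal count is not shared by any inequivalent code.
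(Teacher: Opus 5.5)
Your first step, reading $\lambda$ off Table~\ref{Tab:Non0}, is exactly the paper's. After that you take a different route. The paper cites Tonchev's characterizations (\cite{T86}, Remark~\ref{rem:24-48}, \cite{T86d}) for existence and uniqueness. You instead observe that $b=A_{(v,k)}$ in all three cases, so the blocks must be \emph{all} weight-$k$ supports of a self-dual code attaining the maximum. This reduces uniqueness of the designs to the classification of self-dual codes of lengths $22$ and $24$, which is the same input \cite{PS75} already used to compute $A_{(v,k)}$. It makes the proposition essentially self-contained and in effect recovers those characterizations in this setting. The paper's version is shorter only because it delegates this to the literature.

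One step is wrongly justified. The Assmus--Mattson theorem does not show that the weight-$8$ codewords of $C_{22}$ hold a $3$-design. $C_{22}^\perp=C_{22}$ has six nonzero weights $6,8,10,12,14,16$ that are at most $22-3$, while the theorem allows at most $d-t=3$; indeed the hypothesis fails for every $t\ge 1$. The fact itself is true, so existence in (i) survives with a different reason:
\begin{itemize}
\item $C_{22}$ arises from $G_{24}$ by keeping the codewords that agree on two fixed coordinates and deleting those coordinates.
\item Hence its $330$ weight-$8$ words are the octads avoiding two fixed points.
\item These form a $3$-$(22,8,12)$ design, obtained by taking the residual of the $5$-$(24,8,1)$ design twice.
\end{itemize}
Alternatively, use that $\mathrm{Aut}(C_{22})\cong M_{22}{:}2$ is $3$-transitive, or cite \cite{T86}. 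Your use of Assmus--Mattson for $G_{24}$ is fine.

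What you call the main obstacle is immediate:
\begin{itemize}
\item For $v=22$, the table gives $d(\cD)=\{6\}$, since Lemma~\ref{lem:dual}(ii) gives $d(C(\cD)^\perp)>1+21/7$. So codes with $d=4$ never arise, and $C$ is the unique self-dual $[22,11,6]$ code $C_{22}$.
\item For $v=24$, Gleason's theorem gives $A_8=759-6A_6$ and $A_{12}=2576-20A_6$ for every self-dual code of length $24$ with $A_2=A_4=0$.
\item So the counts $759$ and $2576$ force $A_6=0$, hence $C=G_{24}$. For comparison, $C_{24}$ has $A_6=64$, giving $375$ and $1296$.
\end{itemize}
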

\begin{proof}
From Table~\ref{Tab:Non0}, the only possible values of $\lambda$
are $12$, $21$ and $120$ in (i), (ii) and (iii), respectively.
The existence and uniqueness up to isomorphism of the designs
with these parameters follow from~\cite{T86},
Remark~\ref{rem:24-48} and~\cite{T86d}, respectively.
\end{proof}

\begin{prop}\label{prop:22-10}
There is a self-orthogonal $3$-$(22,10,\lambda)$ design
if and only if $\lambda=48$.
The design is unique, up to isomorphism.
\end{prop}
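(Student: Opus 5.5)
The table for $(3,22,10)$ gives $\tau=6$ and $\mu_{\max}=9$. The column $\mu_{BI}$ excludes $\mu=1,3,5,7,9$. So the only candidates are $\lambda\in\{12,24,36,48\}$. I will show that every such design lives inside $C_{22}$, that $\lambda=48$ is realized uniquely, and then try to exclude the remaining three values.

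\emph{Locating $C(\cD)$.} Let $\cD$ be a self-orthogonal $3$-$(22,10,\lambda)$ design. By Lemma~\ref{lem:dual}(ii), $d(C(\cD)^\perp)>1+\lambda_1/\lambda_2=1+21/9$, so $d(C(\cD)^\perp)\ge 4$. By Lemma~\ref{3-des-v-2k}, $C(\cD)^\perp$ has no vector of weight $4$, since $22\neq 2\cdot 10$. Any self-dual code $C$ with $C(\cD)\subseteq C\subseteq C(\cD)^\perp$ satisfies $C\subseteq C(\cD)^\perp$, and self-dual codes have even weights, so $C$ has minimum weight at least $6$. Since $d_{\max}(22)=6$ and the self-dual $[22,11,6]$ code is unique~\cite{PS75}, $C(\cD)\subseteq C_{22}$. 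Hence $\cB$ is a subset of the $616$ codewords of weight $10$ in $C_{22}$.

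\emph{The value $\lambda=48$.} The number of blocks is $b=\lambda\binom{22}{3}/\binom{10}{3}=\lambda\cdot 1540/120$, which is at most $616$, with equality exactly when $\lambda=48$. If $\lambda=48$, then $\cB$ is the full set of weight-$10$ supports of $C_{22}$. This set is a $3$-design by the Assmus--Mattson theorem (or by the $3$-transitivity of $\mathrm{Aut}(C_{22})$ on the $22$ points). It is self-orthogonal because $C_{22}$ is self-dual. This gives existence, and uniqueness up to isomorphism follows from the uniqueness of $C_{22}$.

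\emph{Excluding $\lambda=12,24,36$.} This is the main obstacle. I would first try to show that $C(\cD)=C_{22}$ is impossible unless $\lambda=48$, using the Mendelsohn equations~\eqref{eq:M} for suitable test vectors.
\begin{enumerate}
\item Take $x$ a codeword of weight $6$ in $C_{22}$, i.e.\ one of the $77$ hexads. A weight-$10$ word meets $S=\supp(x)$ in $0,2,4$ or $6$ points, so the four equations ($i=0,\dots,3$) determine $n_0,\dots,n_3$ uniquely as linear functions of $\lambda$. Comparing these with the actual intersection numbers among all $616$ decads, which bound each $n_j$ from above, may already force $\lambda$.
\item If $C(\cD)$ is a proper subcode of $C_{22}$, then $C(\cD)^\perp\supsetneq C_{22}$ contains odd-weight or other new vectors. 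Their weights are bounded below by the analogous Mendelsohn computation, and the equations~\eqref{eq:M} for $m=5,7,\dots$ with nonnegativity and integrality of the $n_j$ should rule these out, or at least restrict them severely.
\end{enumerate}
If these counting arguments do not close the cases $\lambda=12,24,36$, I would fall back on a direct computation. This means searching, modulo the action of $\mathrm{Aut}(C_{22})$, for subsets of the $616$ decads of size $154$, $308$ or $462$ forming a $3$-design, for instance by solving the Kramer--Mesner system over the orbits of a subgroup. I expect such a search to find none.
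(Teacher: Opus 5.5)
Your reduction to $\lambda\in\{12,24,36,48\}$, the embedding $C(\cD)\subseteq C_{22}$, and your treatment of $\lambda=48$ are correct. The $\lambda=48$ part even gives a self-contained existence and uniqueness argument, where the paper simply cites Tonchev. One small correction: the Assmus--Mattson hypothesis fails for $C_{22}$ itself, which has six nonzero weights $\le 19$ against $d-t=3$. Your alternative, the $3$-transitivity of $\mathrm{Aut}(C_{22})$, is the right justification.

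The gap is that $\lambda=12,24,36$, which is the real content of the proposition, are never excluded. Your route (1) provably gives nothing. For a hexad $x$, the equations \eqref{eq:M} with $m=6$ force $n_0=\lambda/3$, $n_1=15\lambda/2$, $n_2=5\lambda$, $n_3=0$. These are nonnegative integers for every $\lambda=6\mu$, and they are just $\lambda/48$ times the counts $16,360,240,0$ over all $616$ decads, so no upper bound is violated. Route (2) does not address the case $C(\cD)=C_{22}$ with $\lambda<48$, and it is not made precise. The Kramer--Mesner search is only announced, not carried out.

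The missing idea uses two facts you already established: $d(C(\cD)^\perp)\ge 5$ and $\allone\in C(\cD)^\perp$. Two disjoint blocks $B,B'$ would give $\chi_B+\chi_{B'}+\allone$ of weight $2$. Two blocks meeting in $8$ points would give $\chi_B+\chi_{B'}$ of weight $4$. Hence $m_0=m_8=0$ in \eqref{eq:BI}. Now multiply the equations \eqref{eq:BI} for $j=0,1,2,3$ by $-16,11,-6,2$ and add. This eliminates $m_2,m_4,m_6$ and gives $16(m_8-m_0)=8\mu-64$ with $\lambda=6\mu$, so $\mu=8$. This is exactly the paper's argument.

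The same idea also works in your test-vector language, using a decad instead of a hexad. Take a decad $x$ of $C_{22}$ whose support is not a block. Then $n_0(x)=n_4(x)=n_5(x)=0$ for the same reasons. The same combination of \eqref{eq:M} with $m=10$ reads $-16n_0+16n_4+64n_5=4\lambda/3$, which forces $\lambda=0$. So every decad is a block, and $\lambda=48$.
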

\begin{proof}
Up to isomorphism, there is a unique self-orthogonal $3$-$(22, 10, 48)$ 
design~\cite{T86d}.
From Table~\ref{Tab:Non0}, it is sufficient to show
that there is no self-orthogonal $3$-$(22, 10, 6\mu)$ 
design for $\mu \in \{2,4,6\}$.

Suppose that $\cD$ is a self-orthogonal $3$-$(22,10,6\mu)$
design with $\mu\in\{2,4,6\}$.
By Lemma~\ref{lem:dual} (ii) and Lemma~\ref{3-des-v-2k},
$C(\cD)^\perp$ contains no vectors of weight $2$ or $4$.
Since $\allone\in C(\cD)^\perp$, two disjoint blocks
yield a vector of weight $2$ in $C(\cD)^\perp$,
where $\allone$ denotes the all-one vector.
Similarly, two blocks intersecting in $8$ points
yield a vector of weight $4$.
Hence, we have $m_0=m_8=0$ in \eqref{eq:BI}.
Multiplying these equations in \eqref{eq:BI} for $j=0,1,2,3$
by $-16,11,-6,2$, respectively, and adding, we obtain
\[
16(m_8-m_0)=8\mu-64.
\]
Thus, $\mu=8$, a contradiction.
\end{proof}
%%%%%%%%%%% 2-(20,10,lambda) %%%%%%%%%%%%%%
%%%%%%%%%%% 2-(20,10,lambda) %%%%%%%%%%%%%%

\begin{prop}\label{prop:20-10}
There is no self-orthogonal $3$-$(20,10,\lambda)$ design
for any positive integer $\lambda$.
\end{prop}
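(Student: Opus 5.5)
From Table~\ref{Tab:Non0} we have $\tau=4$ and $\mu_{\max}=13$, and the column $\mu_{BI}$ removes every $\mu$ except $\mu\in\{4,8,12\}$. So it suffices to exclude $\lambda\in\{16,32,48\}$. Suppose $\cD$ is a self-orthogonal $3$-$(20,10,\lambda)$ design with one of these values. Then $\lambda_2=9\lambda/4$, $\lambda_1=19\lambda/4$ and $b=19\lambda/2$. By Lemma~\ref{lem:dual}~(ii), $d(C(\cD)^\perp)\ge 4$. Since $d_{\max}(20)=4$, $C(\cD)$ lies in a self-dual $[20,10,4]$ code $C'$, and $C'\subseteq C(\cD)^\perp$. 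Hence every weight-$4$ codeword of $C'$ is a weight-$4$ vector of $C(\cD)^\perp$. Lemma~\ref{3-des-v-2k} gives no contradiction here because $v=2k$.

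First I will solve the Mendelsohn equations~\eqref{eq:M} for a weight-$4$ vector $x\in C(\cD)^\perp$. This forces
\[
n_2(x)=\lambda,\quad n_1(x)=15\lambda/2,\quad n_0(x)=\lambda .
\]
So exactly $\lambda$ blocks contain $\supp(x)$ and exactly $\lambda$ blocks avoid it. Next I will use the structure of the weight-$4$ words of $C'$. Two distinct weight-$4$ vectors $x,y$ of $C'$ meet in $0$ or $2$ points. If they are disjoint, then $x+y$ is a weight-$8$ vector of $C(\cD)^\perp$.

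For such $z=x+y$ I will write down~\eqref{eq:M} with $m=8$. The unknowns are $n_0,\dots,n_4$, there are four equations, and so there is one free parameter. I will then add the combinatorial constraints coming from $x$ and $y$ separately. A block meets $\supp(x)$ in $0$, $2$ or $4$ points, and similarly for $\supp(y)$. The $\lambda$ blocks containing $\supp(x)$ split according to how they meet $\supp(y)$, and the counts must agree with $n_2(y)=\lambda$ and $n_0(y)=\lambda$. Combining these with the weight-$8$ system, as in the proof of Proposition~\ref{prop:22-10} (a linear combination of the equations isolating the extreme intersection counts), I expect to obtain an identity of the form $c\,(\text{nonnegative counts})=\alpha\mu-\beta$. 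That identity should be incompatible with $\mu\in\{4,8,12\}$.

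The main obstacle is that these local counting arguments may not be sharp enough, since the parameters are consistent for a single weight-$4$ vector. My fallback is to use the known classification of self-dual $[20,10,4]$ codes~\cite{Pless72}. There are only a few such codes. For each code I would partition its weight-$10$ codewords into orbits under the automorphism group. I would then check whether some union of orbits (or, failing that, a set found by an exact-cover/ILP search) of size $19\lambda/2$ forms a $3$-design with $\lambda\in\{16,32,48\}$ whose blocks generate a code of minimum dual weight at least $4$. I expect this check to show that none exists, which proves the proposition.
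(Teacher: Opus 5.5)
Your reduction to $\lambda\in\{16,32,48\}$ is correct, and so are the counts $n_0(x)=n_2(x)=\lambda$, $n_1(x)=15\lambda/2$ for a weight-$4$ vector. The gap is that the local counting you plan cannot yield a contradiction.

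Take disjoint weight-$4$ supports $X,Y$ in $C'$. Let $a_{pq}$ be the number of blocks meeting $X$ in $p$ points and $Y$ in $q$ points. Counting pairs $(I,B)$ with $|I\cap X|=a$, $|I\cap Y|=b$, $a+b\le 3$ gives every constraint you listed. It contains the single-quad equations for $x$ and $y$, and by Vandermonde it implies the Mendelsohn system for $x+y$. This system has the nonnegative integral solutions
\[
a_{00}=a_{44}=s,\quad a_{02}=a_{20}=a_{24}=a_{42}=\tfrac{3\lambda}{4}-2s,\quad a_{04}=a_{40}=\tfrac{\lambda}{4}+s,\quad a_{22}=6\lambda+4s
\]
for $0\le s\le 3\lambda/8$, for each of $\lambda=16,32,48$. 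So the hoped-for identity $c\,(\text{nonnegative counts})=\alpha\mu-\beta$ does not arise.

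Everything therefore rests on your fallback search, which is only a promissory note. Moreover, searching unions of automorphism orbits can only find designs, never exclude them. Nonexistence requires an exhaustive exact-cover/ILP infeasibility check for each of the seven codes and each $\lambda$. Done that way it would be a legitimate computer proof, but you have not done it.

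The missing idea in the paper's proof is to test the design against weight-$10$ codewords of the ambient self-dual code, not against vectors of weight $4$ or $8$.

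First, only one of the seven self-dual $[20,10,4]$ codes has every $3$-subset inside some weight-$10$ support. This is the code with $5$ words of weight $4$ and $352$ of weight $10$. In it, some triple lies in only $32$ such supports, so $\lambda\le 32$ and $|\cB|\le 304<352$.

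Next, take a weight-$10$ codeword $y$ whose support is not a block. Then \eqref{eq:M} gives $n_4(y)=n_0(y)+3\lambda/16$. Adding $y$ to the blocks counted by $n_4(y)$ produces distinct weight-$4$ codewords, so $n_4(y)\le 5$. This forces $16\mid\lambda$ and hence $\lambda=16$.

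For $\lambda=16$, \eqref{eq:BI} gives $m_2=4-4m_0$ and $m_8=m_0-1$, so $m_0=1$ and $m_2=m_8=0$. The block set is then closed under complementation and blocks meet in $0$, $4$ or $6$ points. It is thus an independent set of size $76$ in a graph on the $176$ complementary pairs of weight-$10$ words. That graph has independence number $70$, a contradiction. This replaces your open-ended search with a small, targeted computation.
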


\begin{proof}
Let $\cD=(X,\cB)$ be a self-orthogonal $3$-$(20,10,\lambda)$ design.
By Lemma~\ref{lem:dual} (ii) and $d_{\max}(20)=4$,
$C(\cD)$ is contained in a self-dual $[20,10,4]$ code $C$.
Up to equivalence, there are $7$ such codes~\cite{Pless72}.

We verified by \textsc{Magma}~\cite{Magma} that
only one of these codes has the property that every $3$-subset
of the coordinate set is contained in the support of a codeword
of weight $10$.
This is the unique self-dual $[20,10,4]$ code with weight enumerator
\[
1+5y^4+80y^6+250y^8+352y^{10}
+250y^{12}+80y^{14}+5y^{16}+y^{20}.
\]
Since $\lambda>0$, $C$ must be this code, up to equivalence.
We also verified that some $3$-subset is contained in exactly
$32$ supports of codewords of weight $10$.
Hence, $\lambda\leq32$ and $|\cB|=19\lambda/2\leq304$.

Choose a codeword $y\in C$ of weight $10$ with $\supp(y)\notin\cB$.
Since $C\subseteq C(\cD)^\perp$, the Mendelsohn
equations~\eqref{eq:M} apply to $y$ and give
\[
n_4(y)=n_0(y)-4n_5(y)+\frac{3\lambda}{16}
      =n_0(y)+\frac{3\lambda}{16}.
\]
Adding $y$ to the incidence vectors of the blocks counted by $n_4(y)$
gives distinct codewords of weight $4$ in $C$.
Thus, we have $n_4(y)\leq5$.
Since $n_0(y)$ and $n_4(y)$ are nonnegative integers, we have
\[
16\mid\lambda \text{ and }0<\frac{3\lambda}{16}\leq5.
\]
Therefore, we have $\lambda=16$, that is,  
$\cD$ is a $3$-$(20,10,16)$ design.

We now show that no such design exists.
For a fixed block $B\in\cB$, \eqref{eq:BI} gives
\[
m_2=4-4m_0\text{ and } m_8=m_0-1.
\]
Since $m_0\in\{0,1\}$ and $m_8\geq0$, we have
$m_0=1$ and $m_2=m_8=0$.
Thus, $\cB$ is closed under complementation and
$|B\cap B'|\in\{0,4,6\}$ for any distinct $B,B'\in\cB$.
Let $\Gamma$ be the graph with vertex set
\[
V(\Gamma)=
\{\{x,\allone+x\}\mid x\in C,\;\wt(x)=10\},
\]
where two distinct vertices $\{x,\allone+x\}$ and
$\{y,\allone+y\}$ are adjacent if
$\wt(x+y)\in\{4,16\}$.
This definition is independent of the choice of representatives,
since replacing either representative by its complement
interchanges weights $4$ and $16$.
The following set
\[
\{\{x,\allone+x\}\mid x\in C,\;\supp(x)\in\cB\}
\]
is an independent set of $\Gamma$ of size $152/2=76$.
However, we verified by \textsc{Magma}~\cite{Magma} that
$\Gamma$ has independence number $70$, a contradiction.
\end{proof}

For $(t,v,k)$ given in Table~\ref{Tab:Non0}
(except the cases with $\mu_{\max}=0$),
we summarize in Table~\ref{Tab:Sum}
the existence and nonexistence results discussed in this paper
for self-orthogonal $t$-$(v,k,\mu\tau)$ designs.
Some of these results are established in
Section~\ref{sec:unsaturated}, as indicated in the table.
In the column ``$\mu_E$'', $\mu$
indicates that the existence of a self-orthogonal 
$t$-$(v,k,\mu \tau)$ design is known, and
in the column ``$\mu_N$'', $\mu$
indicates that 
the nonexistence of a self-orthogonal $t$-$(v,k,\mu \tau)$ design is
known.

%%%%%%%%%%%%%%%%%%%%%%%%%%%
\begin{table}[thb]
\caption{Self-orthogonal
$(\lfloor\frac{k}{4}\rfloor+1)$-$(v,k,\lambda)$ designs ($v \le 30, k=8,10,12,14$)}
\label{Tab:Sum}
\begin{center}
{\small
%{\footnotesize
%{\scriptsize
\begin{tabular}{c|c|c|c|c}
\noalign{\hrule height0.8pt}
$(t,v,k)$ & $\tau$ & $\mu_{\max}$& $\mu_{E}$&
 $\mu_{N}$\\
 \hline
$(3, 16,  8)$&   3 & 6& $1,2$ (Section~\ref{sec:unsaturated})& \\
$(3, 22,  8)$&   6 & 2&  2& $1$  \\
$(3, 24,  8)$&  21 & 1& 1 &  \\
$(3, 26,  8)$&   7 & 1& &  $1$  \\
$(3, 20, 10)$&   4 &13&   & $1,2,\ldots, 13$  \\
$(3, 22, 10)$&   6 & 9&  8 (\cite{T86d}) &$1,2,\ldots,7,9$  \\
$(3, 26, 10)$&   3 &20& & \\
$(3, 28, 10)$&  20 & 2&  & $1,2$  \\
$(3, 30, 10)$&  18 & 3&  & $1,2,3$ (Section~\ref{sec:unsaturated}) \\
$(4, 24, 12)$&  15 & 8& 8 & $1,2,\ldots,7$\\
$(4, 30, 12)$&  55 & 1& & $1$  \\
$(4, 28, 14)$&  66 & 3& &  $1,2,3$ \\
$(4, 30, 14)$& 143 & 2& & $1,2$ (Section~\ref{sec:unsaturated}) \\
\noalign{\hrule height0.8pt}
\end{tabular}
}
\end{center}
\end{table}
%%%%%%%%%%%%%%%%%

%%%%%%%%%%%%%%%%%%%%%%%%%%%%%%%%%%%%%%%%%%%
\section{Unsaturated self-orthogonal designs}
\label{sec:unsaturated}

In this section,
in order to show that self-orthogonal designs with certain parameters
are saturated,
we establish the nonexistence of unsaturated designs with the parameters.

\begin{lem}\label{lem:BS}
Let $\cD$ be a self-orthogonal $t$-$(v,k,\lambda)$ design
with $k$ even.
Suppose that $\cD$ is unsaturated.
Then there is a nonzero vector $u\in C(\cD)^\perp$
such that $|B\cap S|\leq k/2$ for every block $B$,
where $S=\supp(u)$.
\end{lem}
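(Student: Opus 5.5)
We argue by cases according to which of \eqref{C1}, \eqref{C2}, \eqref{C3} fails. In each case we produce a nonzero $u\in C(\cD)^\perp$. If some block meets $S=\supp(u)$ in more than $k/2$ points, we replace $u$ by a suitable codeword sum. Two facts are used throughout. First, $C(\cD)\subseteq C(\cD)^\perp$, so for any $u\in C(\cD)^\perp$ and block $B$ (identified with its incidence vector), the vector $u+B$ lies in $C(\cD)^\perp$. Second,
\[
\wt(u+B)=\wt(u)+k-2|B\cap S|.
\]
So if $|B\cap S|>k/2$, then $\wt(u+B)<\wt(u)$. The natural strategy is therefore a minimality argument: among all candidate vectors, choose $u$ of least weight.

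\emph{Case 1: \eqref{C1} fails.} Then $C(\cD)^\perp\setminus C(\cD)\neq\emptyset$. Choose $u$ of minimum weight in this set. If $|B\cap S|>k/2$ for some block $B$, then $u+B\in C(\cD)^\perp\setminus C(\cD)$, because $B\in C(\cD)$. Its weight is strictly smaller than $\wt(u)$, contradicting minimality. Also $u\neq0$, since $0\in C(\cD)$. So this $u$ works.

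\emph{Case 2: \eqref{C1} holds and \eqref{C3} fails.} Then $C(\cD)^\perp=C(\cD)$ contains a nonzero codeword of weight less than $k$. Let $u$ be a nonzero codeword of minimum weight $d<k$. If $|B\cap S|>k/2$, then $u+B$ has weight less than $d$. Moreover $u+B\neq0$, since $\wt(u)\neq k$. This contradicts minimality. Alternatively, since $|B\cap S|\le \wt(u)<k$, one could hope for the trivial bound, but that bound is only $<k$, so the minimality argument is the one to use.

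\emph{Case 3: \eqref{C1} and \eqref{C3} hold and \eqref{C2} fails.} Then some codeword $u$ of weight $k$ has support $S\notin\cB$. For every block $B\ne S$, the sum $u+B$ is a nonzero codeword of $C(\cD)$. By \eqref{C3} its weight is at least $k$, so $2k-2|B\cap S|\ge k$, that is, $|B\cap S|\le k/2$. Since $S\notin\cB$, this holds for every block, and we are done.

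The only delicate point is making the minimality choice in Cases~1 and~2 so that $u+B$ stays in the candidate set and is nonzero. In Case~1 this is guaranteed by taking the coset complement of $C(\cD)$. In Case~2 it holds because $d<k$ prevents $u=B$. Case~3 uses \eqref{C3} directly. I expect no serious obstacle beyond organizing these three cases.
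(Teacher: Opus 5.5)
Your proof is correct and uses essentially the same minimality argument as the paper. In the non-self-dual case you take a minimum-weight vector of $C(\cD)^\perp\setminus C(\cD)$, where the paper takes one in a single nontrivial coset; in the self-dual case the paper merges your Cases~2 and~3 by choosing a minimum-weight codeword of $C(\cD)$ whose support is not a block, with the small point implicit in your Case~2 being that the minimum weight of $C(\cD)$ is at most $k$ because blocks give codewords of weight $k$, so failure of \eqref{C3} means it is strictly less than $k$.
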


\begin{proof}
If $C(\cD)\ne C(\cD)^\perp$, then choose a vector $u$ of minimum weight
in a nontrivial coset of $C(\cD)$ in $C(\cD)^\perp$.
Otherwise, since $\cD$ is unsaturated,
\eqref{C2} or \eqref{C3} fails.
Hence, there is a nonzero codeword $u$ of minimum weight
in $C(\cD)$ whose support is not a block.

In either case, $u$ is a nonzero vector and $\supp(u)$ is not a block.
For any block $B$, let $\chi_B$ denote its incidence vector.
Then $u+\chi_B$ is nonzero and lies in the same coset of $C(\cD)$ as $u$.
By the choice of $u$, we have
\[
\wt(u)\leq\wt(u+\chi_B)
=\wt(u)+k-2|B\cap\supp(u)|.
\]
Thus, we conclude that $|B\cap\supp(u)|\leq k/2$.
\end{proof}

\begin{prop}\label{prop:BS}
Let $\cD$ be a self-orthogonal $t$-$(v,k,\lambda)$ design
with $k$ even.
Suppose that $\cD$ is unsaturated.
Let $t_0=\lfloor\frac{k}{4}\rfloor+1$ and
let $m=|S|$, where $S$ is given as in Lemma~\ref{lem:BS}.
Assume $t\geq t_0$.
Then
\begin{equation}\label{eq:mvD}
\sum_{i=1}^{t_0}
i(-2)^{i-1}\binom{2t_0-i-1}{t_0-1}
\binom{m}{i}
\prod_{j=i}^{t_0-1}\frac{v-j}{k-j}=0.
\end{equation}
\end{prop}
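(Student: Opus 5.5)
We apply Lemma~\ref{lem:Mendel} to the Mendelsohn equations~\eqref{eq:M} for the vector $u$ of Lemma~\ref{lem:BS}.

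\textbf{Step 1: reduce the design and the unknowns.} Since $t\geq t_0$, the design $\cD$ is also a $t_0$-$(v,k,\lambda_{t_0})$ design. By~\eqref{eq:lambdass}, its parameters satisfy
\[
\lambda_i=\lambda_{t_0}\prod_{j=i}^{t_0-1}\frac{v-j}{k-j}\quad (0\leq i\leq t_0).
\]
Because $u\in C(\cD)^\perp$, every block meets $S$ in an even number of points. So the equations~\eqref{eq:M} hold for $i=0,1,\ldots,t_0$ with unknowns $n_j=n_j(u)$.

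By Lemma~\ref{lem:BS}, $|B\cap S|\leq k/2$ for every block $B$. Hence $n_j=0$ whenever $2j>k/2$, that is, for $j>\lfloor k/4\rfloor=t_0-1$. The equations for $i\geq1$ do not involve $n_0$, since $\binom{0}{i}=0$. We are therefore left with the system
\[
\sum_{j=1}^{t_0-1}\binom{2j}{i}n_j=\lambda_i\binom{m}{i}\quad (i=1,\ldots,t_0).
\]

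\textbf{Step 2: apply Lemma~\ref{lem:Mendel}.} This system has exactly the shape treated in Lemma~\ref{lem:Mendel} with $t$ replaced by $t_0$ and $b_i=\lambda_i\binom{m}{i}$. Note that $t_0\geq2$ is needed for the index range to be meaningful; the lemma's proof only uses the identity~\eqref{eq:kk}, which holds for all $t_0\geq1$. The lemma gives
\[
\sum_{i=1}^{t_0} k_i\lambda_i\binom{m}{i}=0,\qquad k_i=i(-2)^{i-1}\binom{2t_0-i-1}{t_0-1}.
\]
The $n_j$ are genuine nonnegative integers here, so the hypothesis that $b_i$ be a nonnegative integer is satisfied. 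In any case the proof of Lemma~\ref{lem:Mendel} uses only linearity.

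\textbf{Step 3: clear the common factor.} Substituting $\lambda_i=\lambda_{t_0}\prod_{j=i}^{t_0-1}\frac{v-j}{k-j}$ and dividing by $\lambda_{t_0}>0$ yields exactly~\eqref{eq:mvD}.

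The only point needing care is the bound on the support of the $n_j$. Lemma~\ref{lem:BS} caps the intersections at $k/2$, and the parity condition turns this into $j\leq\lfloor k/4\rfloor$. This is precisely why the threshold is $t_0=\lfloor k/4\rfloor+1$: it makes the number of unknowns, $t_0-1$, one fewer than the number of equations. No further obstacle is expected.
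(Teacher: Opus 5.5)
Your proof is correct and follows the paper's argument essentially step for step. You use parity together with the $k/2$ bound from Lemma~\ref{lem:BS} to cut the Mendelsohn equations for $u$ down to $n_1,\ldots,n_{t_0-1}$, apply Lemma~\ref{lem:Mendel} with $t=t_0$ and $b_i=\lambda_i\binom{m}{i}$, and then substitute $\lambda_i=\lambda_{t_0}\prod_{j=i}^{t_0-1}\frac{v-j}{k-j}$ and divide by $\lambda_{t_0}>0$; your side remarks on $t_0=1$ and on the nonnegativity of the $b_i$ are harmless but unnecessary.
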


\begin{proof}
Since $S$ is the support of a vector in $C(\cD)^\perp$,
each block meets $S$ in an even number of points.
By Lemma~\ref{lem:BS}, this number is at most $k/2$.
Hence, the Mendelsohn equations~\eqref{eq:M} give
\begin{equation}\label{eq:unM}
\sum_{j=1}^{t_0-1}\binom{2j}{i}n_j
=\lambda_i\binom{m}{i}
\quad(i=1,2,\ldots,t_0).
\end{equation}
Applying Lemma~\ref{lem:Mendel} to~\eqref{eq:unM}
with $t=t_0$ and $b_i=\lambda_i\binom{m}{i}$, we obtain
\[
\sum_{i=1}^{t_0}
i(-2)^{i-1}\binom{2t_0-i-1}{t_0-1}
\lambda_i\binom{m}{i}=0.
\]
From~\eqref{eq:lambdass}, we have
\[
\lambda_i
=\lambda_{t_0}\prod_{j=i}^{t_0-1}\frac{v-j}{k-j}
\quad(i=1,2,\ldots,t_0).
\]
Substituting these expressions into the sum above
and dividing by $\lambda_{t_0}>0$ gives~\eqref{eq:mvD}.
\end{proof}

For $k=4$ or $6$, \eqref{eq:mvD} reduces to
\[
\frac{2m(v-(k-1)m+(k-2))}{k-1}=0.
\]
Under the conditions $k<v$ and $0<m<v$,
the integer solutions are
\begin{equation}\label{eq:un46}
(v,m)=((k-1)m-(k-2),m)\quad(m=3,4,\ldots).
\end{equation}
For $k=8,10,\ldots,30$, all integer solutions
of~\eqref{eq:mvD} satisfying
$k<v\leq10000$ and $0<m<v$
are listed in Table~\ref{Tab:un}.
This was found by \textsc{Magma}~\cite{Magma}.

%%%%%%%%%%%%%%%%%%%%%%%%%%%
\begin{table}[thb]
\caption{Solutions $(v,m)$ of~\eqref{eq:mvD} ($k=8,10,\ldots,30$)} 
\label{Tab:un}
\begin{center}
{\small
%{\footnotesize
%{\scriptsize
\begin{tabular}{c|l}
\noalign{\hrule height0.8pt}
$k$ & \multicolumn{1}{c}{$(v,m)$} \\
\hline
 8&$(16, 4),(16, 6),(22, 6),(22, 7),(23, 7)$\\
10&$(20, 4),(22, 6),(26, 6)$\\
12&$(23, 7),(23, 8),(24, 8),(36, 8),(47, 11),(68, 16),(156, 36),(311, 71)$\\
14&$(27, 7),(80, 16),(159, 31)$\\
16&$(22, 3),(24, 8),(27, 4),(32, 8),(43, 8),(43, 11), (48, 12),
    (56, 12),(58, 13)$\\
18&$(36, 8)$\\
20&$(39, 11)$\\
22&$(43, 11)$\\
24&$(48, 12),(72, 16),(96, 20)$\\
26&$(52, 12)$\\
28&$(55, 15)$\\
30&$(59, 15)$\\
\noalign{\hrule height0.8pt}
\end{tabular}
}
\end{center}
\end{table}
%%%%%%%%%%%%%%%%%%%%%%%%%%%%%%%%%%%%%%%%%%%%%%%%

We are now in a position to give a proof of
Theorem~\ref{thm:24-96}.
\begin{proof}[Proof of Theorem~\ref{thm:24-96}]
Suppose that $(v,k)=(24,8)$, $(48,12)$ or $(72,16)$.
Since these $(v,k)$ are not in Table~\ref{Tab:un},
a self-orthogonal $(\lfloor\frac{k}{4}\rfloor+1)$-$(v,k,\lambda)$ design
$\cD$ is saturated by Proposition~\ref{prop:BS}.
This means that $C(\cD)$ is an extremal doubly even self-dual code of length $v$.
Hence, by \eqref{C2} and the Assmus--Mattson theorem~\cite{AM},
$\cD$ is a $5$-$(v,k,\binom{5(v/24)-2}{(v/24)-1})$ design.
This completes a proof of
Theorem~\ref{thm:24-96}.
\end{proof}

We also apply Proposition~\ref{prop:BS} to have the following:

\begin{prop}\label{prop:nonex4}
There is no self-orthogonal $t$-$(v,k,\lambda)$ design
for
\[
(t,v,k)=(3,30,10)\text{ and }(4,30,14).
\]
\end{prop}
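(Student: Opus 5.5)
Since $(30,10)$ and $(30,14)$ do not occur in Table~\ref{Tab:un}, Proposition~\ref{prop:BS} immediately yields that any self-orthogonal $3$-$(30,10,\lambda)$ design or $4$-$(30,14,\lambda)$ design $\cD$ must be saturated. Note that $t=3=\lfloor 10/4\rfloor+1$ and $t=4=\lfloor 14/4\rfloor+1$, so the hypothesis $t\geq t_0$ holds. Saturation means that $C(\cD)$ is a self-dual $[30,15,d]$ code with minimum weight $d=k$, by \eqref{C1} and \eqref{C3}. The plan is to contradict this using the known bound $d_{\max}(30)=6$ from~\cite{C-S}.

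Concretely, \eqref{C3} demands that $C(\cD)$ have minimum weight $10$ in the first case and $14$ in the second. This is impossible, since no self-dual code of length $30$ has minimum weight exceeding $6$. As a consistency check, the column $d(\cD)$ of Table~\ref{Tab:Non0} lists only $d=6$ for both parameter sets. Hence no such design exists, for any $\lambda$.

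There is essentially no obstacle here. The only points to verify are that the $(v,k)$ pairs really are absent from Table~\ref{Tab:un}, which covers all $k\le 30$ and $v\le 10000$, and that $d_{\max}(30)=6$, which is read from~\cite[Table~I]{C-S}. This also settles the entries $\mu=1,2,3$ and $\mu=1,2$ recorded in Table~\ref{Tab:Sum} for these parameter sets.
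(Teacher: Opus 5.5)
Your proposal is correct and follows the paper's proof. The pairs $(30,10)$ and $(30,14)$ are absent from Table~\ref{Tab:un}, so Proposition~\ref{prop:BS} forces $\cD$ to be saturated. The condition $0<m<v$ needed to read the table holds automatically, since $S$ cannot be the whole point set: that would give $|B\cap S|=k>k/2$. Hence $C(\cD)$ would be a self-dual code of length $30$ with minimum weight $k\in\{10,14\}$, contradicting $d_{\max}(30)=6$.
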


\begin{proof}
Suppose that $\cD$ is a self-orthogonal $t$-$(v,k,\lambda)$
design with $(t,v,k)=(3,30,10)$ or $(4,30,14)$.
If $\cD$ is unsaturated, then Proposition~\ref{prop:BS}
gives an integer solution of~\eqref{eq:mvD}
with $0<m<v$.
However, Table~\ref{Tab:un} contains no such solution
for $(v,k)=(30,10)$ or $(30,14)$.
Thus, $\cD$ is saturated.
Hence, $C(\cD)$ is a self-dual code of length $30$
and minimum weight $k\in\{10,14\}$.
This contradicts $d_{\max}(30)=6$.
\end{proof}

%%%%%%%%%%%%%%%%%%%%%%%%%%%
\begin{table}[thb]
\caption{$(\lfloor\frac{k}{4}\rfloor+1)$-$(v,k,\lambda)$ designs 
with parameters in Table~\ref{Tab:un} ($v$ even)}
\label{Tab:Non2}
\begin{center}
{\small
%{\footnotesize
%{\scriptsize
\begin{tabular}{c|c|c|c|rl|l|c}
\noalign{\hrule height0.8pt}
$(t,v,k)$ & $\tau$ & $\mu_{\max}$& $d(\cD)$ & \multicolumn{2}{c|}{$A_{(v,k)}$}
& \multicolumn{1}{c|}{$\mu_{BI}$} & $\mu_U$\\
\hline
$(4, 36, 12)$ & 15 &  5& $8$  & 10387 &(\cite{HM36}) & & \\
$( 5, 22, 16)$& 728&  0& $6$&   77 &(\cite{PS75}) & & \\
$(5, 24, 16)$ & 78 &  1& $6,8$&  759 &(\cite{PS75}) & & \\
$(5, 32, 16)$ & 39 & 20& $6,8$&36518 &(\cite{BR02}, \cite{CPS})&&$1,2,\ldots,20$\\
$(5, 36, 18)$ & 35 & 46& $6,8$&71232 &(\cite{HM36}) & $1,2,\ldots,31$ & $32$\\
              &    &   &      & && $33,\ldots,46$ & \\
\noalign{\hrule height0.8pt}
\end{tabular}
}
\end{center}
\end{table}
%%%%%%%%%%%%%%%%%

Next we  consider the parameter pairs $(v,k)$ given in Table~\ref{Tab:un}.
For the cases with even $v\leq36$ not included
in Table~\ref{Tab:Non0}, the corresponding bounds
are given in Table~\ref{Tab:Non2}.
Note that the numbers $A_{(v,k)}$ are known for $v\leq36$.
The column ``$\mu_{BI}$'' lists the values of $\mu$
for which the system~\eqref{eq:BI} with $\lambda=\tau\mu$
has no solution in nonnegative integers.
The column ``$\mu_U$'' lists the values of $\mu$
for which the existence of a self-orthogonal
$t$-$(v,k,\tau\mu)$ design is unknown.
From Table~\ref{Tab:Non2}, we have the following:

\begin{prop}\label{prop:nonex5}
\begin{enumerate}
\item
There is no self-orthogonal $5$-$(22,16,\lambda)$ design.
\item
There is a self-orthogonal $5$-$(24,16,\lambda)$ design
if and only if $\lambda=78$.
\item
If there is a self-orthogonal $5$-$(36,18,\lambda)$ design,
then $\lambda=1120$.
\end{enumerate}
\end{prop}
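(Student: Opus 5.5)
The plan is to read each part off Table~\ref{Tab:Non2}, in the same way Propositions~\ref{prop:nonex2} and~\ref{prop:nonex3} were read off Table~\ref{Tab:Non0}. Any self-orthogonal $t$-$(v,k,\lambda)$ design has $\lambda=\tau\mu$ with $1\le\mu\le\mu_{\max}$, because $\tau\mid\lambda$ and $|\cB|\le A_{(v,k)}$. Moreover, $\mu$ must not lie in the column $\mu_{BI}$, since \eqref{eq:BI} has a nonnegative integer solution for every block of an actual design.

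For part (i), with $(t,v,k)=(5,22,16)$, we have $d(\cD)=\{6\}$. Hence $C(\cD)$ lies in a self-dual $[22,11,6]$ code, which must be $C_{22}$. That code has $A_{(22,16)}=77$ codewords of weight $16$. With $\tau=728$ this gives $\mu_{\max}=\lfloor 77\binom{16}{5}/(\binom{22}{5}\cdot 728)\rfloor=0$, so no admissible $\lambda$ exists and the argument of Proposition~\ref{prop:nonex2} applies.

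For part (ii), with $(t,v,k)=(5,24,16)$, we have $\tau=78$ and $\mu_{\max}=1$, so necessarily $\lambda=78$. For existence, take the $759$ codewords of weight $16$ in $G_{24}$. These are the complements of the octads, so they form the complementary design of the $5$-$(24,8,1)$ Witt design, which is a $5$-$(24,16,78)$ design. Its blocks are supports of codewords of the self-dual code $G_{24}$. Therefore all block intersections are even and the design is self-orthogonal. As a check, $78=\binom{19}{11}\cdot(\text{the complementary }\lambda)$, consistent with $b=759=78\binom{24}{5}/\binom{16}{5}$.

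For part (iii), with $(t,v,k)=(5,36,18)$, we have $\tau=35$ and $\mu_{\max}=46$. The column $\mu_{BI}$ excludes every $\mu\in\{1,\ldots,46\}$ except $\mu=32$. Thus $\lambda=35\cdot 32=1120$ is the only possibility.

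The only step that is not pure table lookup is the existence claim in (ii), and it is settled by the complementation argument above. The real work lies in the entries of Table~\ref{Tab:Non2} themselves: the values $A_{(v,k)}$ quoted from the classifications, and the infeasibility of \eqref{eq:BI} for each excluded $\mu$. These would be verified by solving the small linear system \eqref{eq:BI} for $m_{2i}$, $0\le i\le 8$, and checking nonnegativity and integrality. I would expect the $(5,36,18)$ range to require the most computation, but it is routine.
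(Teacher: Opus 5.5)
Your proposal is correct and follows the paper's route: each part is read off Table~\ref{Tab:Non2}, namely $\mu_{\max}=0$ for $(5,22,16)$, $\mu_{\max}=1$ for $(5,24,16)$, and only $\mu=32$ surviving the $\mu_{BI}$ test for $(5,36,18)$, with existence in (ii) coming from the complementary design of the Witt system, exactly as the paper remarks. The only slip is your side check $78=\binom{19}{11}\cdot(\ldots)$, which is false since $\binom{19}{11}=75582$; the correct count of octads avoiding a $5$-set is $\sum_{i=0}^{5}(-1)^i\binom{5}{i}\lambda_i=759-1265+770-210+25-1=78$, though your identity $759=78\binom{24}{5}/\binom{16}{5}$ already pins down $\lambda=78$ once the complement is known to be a $5$-design.
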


The existence of a self-orthogonal $5$-$(24,16,78)$ design
in (ii) follows by taking the complementary
design of the Witt system on $24$ points.

\begin{prop}\label{prop:3-36-12-15}
There is no self-orthogonal $4$-$(36,12,\lambda)$ design
for any positive integer $\lambda$.
\end{prop}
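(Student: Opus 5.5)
Table~\ref{Tab:Non2} leaves $\lambda=15\mu$ with $1\le\mu\le5$ for $(t,v,k)=(4,36,12)$, so I would fix such a design $\cD$ and aim for a contradiction. Lemma~\ref{lem:dual}~(i) gives $d(C(\cD)^\perp)\ge 3+\lambda_3/\lambda=3+33/9>6$. Since $v=36$ is even, $C(\cD)$ lies in a self-dual $[36,18,8]$ code $C$, which is necessarily singly even because $36\not\equiv 0\pmod 8$. Because $k=12\equiv 0\pmod 4$, $C(\cD)$ is doubly even and hence is contained in the doubly even subcode $C_0$ of $C$. Moreover $(36,8)$ is a solution in Table~\ref{Tab:un}, so Proposition~\ref{prop:BS} does not exclude unsaturation. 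Instead I would exploit this solution: any $u$ as in Lemma~\ref{lem:BS} has weight $m=8$, and every block meets $\supp(u)$ in $2$ or $4$ points.

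First I would determine the distribution of $n_j(u)$ for $m=8$ directly from the Mendelsohn equations~\eqref{eq:M} with $i=0,\dots,4$. Setting $n_j=0$ for $j\ge3$, this becomes an overdetermined system in $n_0,n_1,n_2$. It should be consistent (that is exactly what $(36,8)$ in Table~\ref{Tab:un} means), and it gives explicit values in terms of $\mu$. I would then compare these values with the weight-$8$ structure of $C$. The vector $u$ is a weight-$8$ vector of $C(\cD)^\perp$. If $C(\cD)=C(\cD)^\perp$ failed we could take $u$ in $C$ itself; otherwise $u$ is a minimum weight codeword. In both cases I would like $u\in C$, and this needs justification: the coset construction in Lemma~\ref{lem:BS} could be run inside $C\subseteq C(\cD)^\perp$, choosing a minimum weight vector of $C\setminus C(\cD)$. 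That vector has weight $8$, and since $C\subseteq C(\cD)^\perp$ the Mendelsohn argument still applies.

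Next I would enumerate the self-dual $[36,18,8]$ codes (the classification cited as~\cite{HM36}). For each such code $C$, I would compute the weight-$12$ codewords of $C_0$ and the $4$-subset coverage, as in the proof of Proposition~\ref{prop:20-10}. The aim is to show that no collection of weight-$12$ supports forms a $4$-$(36,12,15\mu)$ design. The cheapest filter is the following: for each $C$, check whether every $4$-subset is covered by a weight-$12$ codeword of $C_0$, and compute the minimum coverage $\lambda_{\min}$, which forces $15\mu\le\lambda_{\min}$. Combined with the block intersection equations~\eqref{eq:BI}, the constraints $n_j(u)\ge0$ for every weight-$8$ codeword $u$ of $C$ not in $C(\cD)$ should eliminate the remaining $\mu$. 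For instance, adding $u$ to a block meeting it in $4$ points gives a weight-$12$ codeword, and the number of such codewords is bounded by what $C$ contains.

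The main obstacle is this final computational step. $A_{(36,12)}=10387$ is large, and pure existence of a design inside a given code is a hard exact-cover type problem. If the coverage and counting filters do not finish the job, I would fall back on an independent-set or clique formulation. That means building a graph on weight-$12$ codewords of $C_0$ whose edges are the pairs whose intersection sizes are forbidden by~\eqref{eq:BI}. The design would then be a large independent set, and I would show that its required size $|\cB|=15\mu\binom{36}{4}/\binom{12}{4}$ exceeds the independence number, mirroring Proposition~\ref{prop:20-10}.
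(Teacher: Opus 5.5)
\textbf{Gap.} Your proposal does not yet prove the statement. The sound part is the reduction: $\mu\in\{1,\dots,5\}$, and $C(\cD)$ lies in a singly even self-dual $[36,18,8]$ code $C$. Everything after that is a computational programme whose outcome you never establish. That programme consists of coverage counts over the codes of \cite{HM36}, the constraints $n_j(u)\ge 0$, and possibly an independence-number computation on a graph with thousands of vertices. You only say these filters ``should'' eliminate the remaining $\mu$.

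\textbf{Error in the one concrete step.} Lemma~\ref{lem:BS} gives only $|B\cap\supp(u)|\le k/2=6$, so $n_3$ must be kept. For $m=8$ the equations \eqref{eq:M} with $i=0,\dots,4$ force
\[
(n_0,n_1,n_2,n_3)=(91\mu,1036\mu,630\mu,28\mu).
\]
If you set $n_3=0$ as you propose, the equations for $i=3,4$ read $4n_2=3080\mu$ and $n_2=1050\mu$, which are inconsistent. So blocks meeting $\supp(u)$ in $6$ points, and in $0$ points, do occur. Your claim that every block meets $\supp(u)$ in $2$ or $4$ points is false.

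\textbf{Missing idea.} You already stated the key fact and then abandoned it: $C$ is singly even. Hence there is $x\in C\subseteq C(\cD)^\perp$ of weight $2s$ with $s$ odd.
\begin{enumerate}
\item Apply \eqref{eq:M} to $x$, with no restriction on intersection sizes.
\item Combine the equations for $i=1,2,3,4$ with coefficients $-5,10,-12,8$. These are proportional to the $k_i$ of Lemma~\ref{lem:Mendel} for $t=4$.
\item The coefficients of $n_1,n_2,n_3$ vanish, and those of $n_4,n_5,n_6$ are $128,640,1920$. The right-hand side is $80\mu s(s^3-14s^2+66s-104)$.
\item Dividing by $16$ gives $8(n_4+5n_5+15n_6)=5\mu s(s^3-14s^2+66s-104)$.
\item For odd $s$ the factor $5s(s^3-14s^2+66s-104)$ is odd, so $8\mid\mu$, contradicting $\mu\le 5$.
\end{enumerate}
No code classification or search is needed.

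\textbf{Repair inside your own framework.} Since $C(\cD)$ is doubly even and $C$ is self-orthogonal, every vector in the nontrivial coset $x+C(\cD)$ has weight $\equiv 2\pmod 4$. Running Lemma~\ref{lem:BS} on this coset yields $u$ with $m\equiv 2\pmod 4$. However, $s^3-14s^2+66s-104=(s-4)(s^2-10s+26)$, so \eqref{eq:mvD} admits only $m=8$, which is a contradiction.
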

\begin{proof}
Let $\cD$ be a self-orthogonal $4$-$(36,12,15\mu)$ design.
From Table~\ref{Tab:Non2}, we have $\mu \in \{1,2,3,4,5\}$.
Since $C(\cD)$ is self-orthogonal, there is 
a self-dual code $C$ containing $C(\cD)$.
Since $36\not\equiv0\pmod8$, $C$ is not doubly even.
Hence, there is a vector $x\in C\subseteq C(\cD)^\perp$
of weight $2s$, where $s$ is odd.

We apply the Mendelsohn equations~\eqref{eq:M}
to the vector $x$ with $m=2s$.
Multiplying these equations for $i=1,2,3,4$
by $-5,10,-12,8$, respectively, adding them,
and dividing by $16$, we obtain
\[
8(n_4+5n_5+15n_6)
=5\mu s(s^3-14s^2+66s-104).
\]
Since $s$ is odd, $5s(s^3-14s^2+66s-104)$ is odd.
Thus, we have $8\mid\mu$, a contradiction.
\end{proof}

%%%%%%%%%%% nonexistence (1) %%%%%%%%%%%%%%
%%%%%%%%%%% nonexistence (1) %%%%%%%%%%%%%%
Now we examine the existence of
self-orthogonal $(\lfloor\frac{k}{4}\rfloor+1)$-$(v,k,\lambda)$ designs
for $(v,k)$ given in Table~\ref{Tab:un},
where $v$ is odd or $v$ is even with $v\geq38$.
Let $\cD=(X,\cB)$ be a self-orthogonal $t$-$(v,k,\lambda)$ design. 
Then $C(\cD)$ is a self-orthogonal code containing 
at least $|\cB|+1$ codewords.
Since $|\cB|+1 \le 2^{\lfloor v/2\rfloor}$,
we have
$\lambda \le 
\lfloor2^{\lfloor v/2\rfloor} \binom{k}{t}/\binom{v}{t}\rfloor$.
Let $\tau$ be as defined in Section~\ref{sec:4}.
Let
\[
\mu_{\max}=
\left\lfloor
\frac{2^{\lfloor \frac{v}{2}\rfloor} \binom{k}{t}}{\binom{v}{t} \tau}
\right\rfloor.
\]
Of course, 
if there is a self-orthogonal $t$-$(v,k,\mu \tau)$ design,
then $\mu \le \mu_{\max}$.
For these parameter pairs, Table~\ref{Tab:Non} lists
$\tau$ and $\mu_{\max}$, except for
$(t,v,k)=(4,156,12)$, $(4,311,12)$ and $(4,159,14)$.
For these three parameters, we have
\[
\tau(\mu_{\max}+1)>\binom{v-t}{k-t}.
\]
Thus, the bound $\mu\leq\mu_{\max}$ gives no additional
restriction beyond $\lambda\leq\binom{v-t}{k-t}$.

\begin{prop}\label{prop:4-27-16}
There is no self-orthogonal $5$-$(27,16,\lambda)$ design
for any positive integer $\lambda$.
\end{prop}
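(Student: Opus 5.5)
The plan is to show that any such design would have to be unsaturated, to read off the weight $m=4$ from Table~\ref{Tab:un}, and then to derive a contradiction from the Mendelsohn equations for a weight-$4$ vector.

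Suppose that $\cD$ is a self-orthogonal $5$-$(27,16,\lambda)$ design. Here $k=16$, so $t_0=\lfloor 16/4\rfloor+1=5=t$, and Proposition~\ref{prop:BS} applies once we know that $\cD$ is unsaturated. This is automatic: $v=27$ is odd, so $C(\cD)$ cannot be self-dual, since a self-dual code has even length. Hence \eqref{C1} fails.

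Lemma~\ref{lem:BS} then gives a nonzero $u\in C(\cD)^\perp$ whose support $S$ meets every block in at most $k/2=8$ points. By Proposition~\ref{prop:BS}, $m=|S|$ satisfies \eqref{eq:mvD} with $(v,k)=(27,16)$. Table~\ref{Tab:un} lists exactly one solution for $k=16$, $v=27$, namely $(27,4)$. So $C(\cD)^\perp$ contains a vector $x$ of weight $m=4$.

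Next I apply the Mendelsohn equations~\eqref{eq:M} to $x$. Only $n_0,n_1,n_2$ can be nonzero, since $|B\cap S|\le 4$. Take the equations for $i=3$ and $i=4$; both are available because $t=5\ge4$. They read
\[
4n_2=\lambda_3\binom{4}{3},\qquad n_2=\lambda_4\binom{4}{4},
\]
so $\lambda_3=\lambda_4$. By \eqref{eq:lambdass},
\[
\lambda_3=\lambda_4\,\frac{v-3}{k-3}=\frac{24}{13}\,\lambda_4\neq\lambda_4,
\]
because $\lambda_4>0$. This is a contradiction, so no such design exists. The $i=5$ equation, which gives $0=\lambda_5\binom{4}{5}=0$, is trivially consistent and is not needed.

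The only step that is not routine is the passage from unsaturation to the existence of the weight-$4$ vector. That step relies entirely on the completeness of Table~\ref{Tab:un} for $k=16$ and $v=27$, which the paper has already established by computer. The step also needs $0<m<v$, which holds because $u\neq0$ and $S$ meets every block in at most $8<16$ points, so $S$ is not the whole point set. Everything after that is a two-line computation.
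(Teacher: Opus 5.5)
Your proof is correct, but it takes a different route from the paper's. The paper argues purely arithmetically. Integrality of $\lambda_0,\dots,\lambda_4$ forces $\tau=2184\mid\lambda$, and already $\lambda=2184$ gives $|\cB|=\lambda\binom{27}{5}/\binom{16}{5}=40365>2^{13}$. This is impossible, because $C(\cD)$ is self-orthogonal of length $27$ and so has at most $2^{13}$ codewords; it is the entry $\mu_{\max}=0$ in Table~\ref{Tab:Non}.

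You instead use the odd length to force unsaturation. Lemma~\ref{lem:BS}, Proposition~\ref{prop:BS} and Table~\ref{Tab:un} then give a vector of weight $4$ in $C(\cD)^\perp$, which you rule out with the $i=3,4$ Mendelsohn equations. That last step is exactly Lemma~\ref{lem:dual}~(ii), which gives $d(C(\cD)^\perp)>3+\lambda_3/\lambda_4=3+24/13$, so you could simply cite it.

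Your argument uses neither the divisibility conditions nor the size bound $|C(\cD)|\le 2^{\lfloor v/2\rfloor}$. The paper's version is shorter and does not depend on the computer-generated Table~\ref{Tab:un}. That dependence is harmless here, though. For $(v,k)=(27,16)$, clearing denominators in \eqref{eq:mvD} gives a quintic in $m$ with roots $0$, $4$, and three non-integers in $(6,7)$, $(8,9)$ and $(9,10)$. So $m=4$ is indeed forced, and this can be checked by hand.
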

\begin{proof}
For $(t,v,k)=(5,27,16)$, Table~\ref{Tab:Non} gives
$\tau=2184$ and $\mu_{\max}=0$.
However, such a design has $\lambda=\tau\mu$
for a positive integer $\mu\leq\mu_{\max}$, a contradiction.
\end{proof}

%%%%%%%%%%%%%%%%%%%%%%%%%%%
\begin{table}[htb]
\caption{$(\lfloor\frac{k}{4}\rfloor+1)$-$(v,k,\lambda)$ designs 
with parameters in Table~\ref{Tab:un}}
\label{Tab:Non}
\begin{center}
{\small
%{\footnotesize
%{\scriptsize
\begin{tabular}{c|c|c||c|c|c}
\noalign{\hrule height0.8pt}
$(t,v,k)$ & $\tau$ & $\mu_{\max}$&
$(t,v,k)$ & $\tau$ & $\mu_{\max}$\\
\hline
$(3,23, 8)$&    8 &         8 &$(5,58,16)$&   52 &      9841 \\
$(4,23,12)$&    9 &        12 &$(6,39,20)$&  120 &        51 \\
$(4,47,12)$&    9 &      2586 &$(6,43,22)$&  561 &        45 \\
$(4,68,12)$&    9 &   1160253 &$(7,48,24)$&  285 &       276 \\
$(4,27,14)$&   77 &         6 &$(7,72,24)$& 1995 &      8092 \\
$(4,80,14)$&    2 & 347946717 &$(7,96,24)$&  209 &  39106891 \\
$(5,27,16)$& 2184 &         0 &$(7,52,26)$& 2310 &       142 \\
$(5,43,16)$&   56 &       169 &$(8,55,28)$& 1771 &       193 \\
$(5,48,16)$&  273 &       156 &$(8,59,30)$& 3450 &       410 \\
$(5,56,16)$&   42 &      7308 &&&\\
\noalign{\hrule height0.8pt}
\end{tabular}
}
\end{center}
\end{table}
%%%%%%%%%%%%%%%%%

Now let $\cD=(X,\cB)$ be a self-orthogonal $t$-$(v,k,\lambda)$ design
with $v$ odd. 
Let $C'(\cD)$ be the self-orthogonal code obtained
from $C(\cD)$ by appending a zero coordinate.
Then $C'(\cD)$ is contained in a self-dual code
of length $v+1$ containing at least $|\cB|$
codewords of weight $k$.
Let $A'_{(v+1,k)}$ denote the largest number of codewords of weight $k$ 
among self-dual codes of length $v+1$.
Let
\[
\mu'_{\max}=
\left\lfloor
\frac{A'_{(v+1,k)} \binom{k}{t}}{\binom{v}{t} \tau}
\right\rfloor.
\]
Of course, 
if there is a self-orthogonal $t$-$(v,k,\mu \tau)$ design,
then $\mu \le \mu'_{\max}$.
In the case $(t,v,k)=( 3, 23,  8)$, we have $\mu'_{\max}=3$, 
which improves the result in Table~\ref{Tab:Non},
where $A'_{(24,8)}=759$.

%%%%%%%%%%%%%%%%%%%%%%%%%%%
\begin{table}[htb]
\caption{$(\lfloor\frac{k}{4}\rfloor+1)$-$(v,k,\lambda)$ designs 
with parameters in Table~\ref{Tab:un} ($v$ odd)}
\label{Tab:NonII}
\begin{center}
{\small
%{\footnotesize
%{\scriptsize
\begin{tabular}{c|c|c|c}
\noalign{\hrule height0.8pt}
$(t,v,k)$ & $\tau$ & $\mu_{\max}$& \multicolumn{1}{c}{$\mu_{BI}$}\\
% & $\mu_U$\\
\hline
$( 3, 23,  8)$&    8 & 3 &              $1$               \\
$( 4, 23, 12)$&    9 & 12& $1,2,\ldots,7,9,\ldots,12$   \\
$( 4, 27, 14)$&   77 & 6 & $1,2,\ldots,6$               \\
$( 6, 39, 20)$&  120 & 51& $1,2,\ldots,31,33,\ldots,51$ \\
$( 6, 43, 22)$&  561 & 45& $1,2,\ldots,45$              \\
\noalign{\hrule height0.8pt}
\end{tabular}
}
\end{center}
\end{table}
%%%%%%%%%%%%%%%%%

For the cases in Table~\ref{Tab:Non} with $v$ odd
and $0<\mu_{\max}\leq100$, we give further restrictions
in Table~\ref{Tab:NonII}.
For $(t,v,k)=(3,23,8)$, we use the improved bound
$\mu'_{\max}=3$ in the column ``$\mu_{\max}$''.
The column ``$\mu_{BI}$'' has the same meaning
as in Table~\ref{Tab:Non0}.
From Table~\ref{Tab:NonII}, we have the following:

\begin{prop}\label{prop:nonex6}
\begin{enumerate}
\item
There is a self-orthogonal $4$-$(23,12,\lambda)$ design
if and only if $\lambda=72$.
\item
There is no self-orthogonal $t$-$(v,k,\lambda)$ design
for $(t,v,k)=(4,27,14)$ and $(6,43,22)$.
\item
If there is a self-orthogonal $6$-$(39,20,\lambda)$ design,
then $\lambda=3840$.
\end{enumerate}
\end{prop}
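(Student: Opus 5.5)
The plan is to read Proposition~\ref{prop:nonex6} off Table~\ref{Tab:NonII}, and to supply an existence construction for the single surviving value in part~(i).

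\emph{Reducing to finitely many $\lambda$.} Every self-orthogonal $t$-$(v,k,\lambda)$ design has $\lambda=\tau\mu$ with $\mu$ a positive integer. For $v$ odd we append a zero coordinate and embed $C'(\cD)$ in a self-dual code of length $v+1$, which gives $\mu\le\mu'_{\max}$; in any case $\mu\le\mu_{\max}$ from Table~\ref{Tab:Non}. Table~\ref{Tab:NonII} records these bounds: $\mu\le 12$ for $(4,23,12)$, $\mu\le 6$ for $(4,27,14)$, $\mu\le 45$ for $(6,43,22)$ and $\mu\le 51$ for $(6,39,20)$.

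\emph{Excluding values by block intersections.} Fix a block $B$. For each remaining $\mu$, the system~\eqref{eq:BI} in the unknowns $m_{2i}$ must have a nonnegative integer solution. The column $\mu_{BI}$ lists the values for which it does not. This excludes every $\mu\le\mu_{\max}$ for $(4,27,14)$ and $(6,43,22)$, which proves~(ii). For $(6,39,20)$ the only survivor is $\mu=32$, so $\lambda=120\cdot32=3840$, which proves~(iii). For $(4,23,12)$ the only survivor is $\mu=8$, so $\lambda=72$.

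\emph{Existence for $\lambda=72$.} We take the codewords of weight $12$ in $G_{24}$ that contain a fixed coordinate, and delete that coordinate. This gives $1288=\binom{23}{4}\cdot72/\binom{12}{4}$ blocks of size $11$. These do not have size $12$, so this is the wrong derived design.

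Instead we take the $2576/2=1288$ dodecads of $G_{24}$ that \emph{avoid} the fixed coordinate. These form the residual of the $5$-$(24,12,48)$ design, which is a $4$-$(23,12,72)$ design. The blocks are supports of codewords of the doubly even code $G_{24}$, so any two of them meet in an even number of points. Hence the design is self-orthogonal.

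\emph{Main obstacle.} The only real work is the computation behind the $\mu_{BI}$ column: solving~\eqref{eq:BI} for each $\mu$, which is routine integer linear feasibility, for example by eliminating the unknowns as in the proof of Proposition~\ref{prop:22-10}. After that, the existence check for $\lambda=72$ is the one step requiring care.
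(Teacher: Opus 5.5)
Your proposal is correct and follows the paper's route. The bounds $\mu\le\mu_{\max}$ and the $\mu_{BI}$ column of Table~\ref{Tab:NonII} leave only $\mu=8$ for $(4,23,12)$, $\mu=32$ for $(6,39,20)$, and nothing for $(4,27,14)$ or $(6,43,22)$. This is exactly how the paper reads the proposition off the table. For existence in (i) the paper simply cites~\cite{T86d}; your residual of the $5$-$(24,12,48)$ dodecad design of $G_{24}$ is a valid self-contained replacement (the $1288$ dodecads avoiding a fixed point, $\lambda=120-48=72$, with even intersections because $G_{24}$ is doubly even). You should delete the false start about the derived design.
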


The existence of a self-orthogonal $4$-$(23,12,72)$ design
in (i) is established in~\cite{T86d}.

\begin{prop}\label{prop:23-8}
There is a self-orthogonal $3$-$(23,8,\lambda)$ design
if and only if $\lambda=16$.
The design is unique, up to isomorphism.
\end{prop}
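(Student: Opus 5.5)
From Table~\ref{Tab:NonII}, a self-orthogonal $3$-$(23,8,\lambda)$ design has $\lambda=8\mu$ with $\mu\le\mu'_{\max}=3$, and $\mu=1$ is excluded by \eqref{eq:BI}. So I only need to rule out $\lambda=24$ and to show that $\lambda=16$ occurs and is unique. The tool in both cases is the embedding used for $\mu'_{\max}$. Append a zero coordinate to get $C'(\cD)$ and embed it in a self-dual code $C$ of length $24$. Then $C$ has at least $|\cB|=253\lambda/56$ codewords of weight $8$, and all of them have a $0$ in the appended coordinate.

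\textbf{Ruling out $\lambda=24$.} Here $|\cB|=759$ is large. By Lemma~\ref{lem:dual} (ii), $d(C(\cD)^\perp)>1+\lambda_1/\lambda_2=1+22/6$, so the minimum weight of $C(\cD)^\perp$ is at least $5$. Every vector of $C$ restricted to the first $23$ coordinates lies in $C(\cD)^\perp$, which gives $d(C)\ge 6$.

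Among self-dual $[24,12,\ge6]$ codes, only $G_{24}$ has as many as $759$ weight-$8$ words; $C_{24}$ has far fewer, as Table~\ref{Tab:Non0} already indicates. So $C=G_{24}$. But $G_{24}$ has exactly $759$ weight-$8$ words, and every coordinate lies in $253$ of them. Therefore the $759$ blocks cannot all avoid the appended coordinate, a contradiction.

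\textbf{Existence and uniqueness for $\lambda=16$.} Now $|\cB|=506$, and the same argument forces $C=G_{24}$, with $\cB$ a family of $506$ octads avoiding one fixed coordinate. There are exactly $759-253=506$ octads avoiding a point. Hence $\cB$ is precisely the set of octads of the Witt system not containing a fixed point. This is the residual-type design, and it is a $3$-$(23,8,16)$ design: the counts are $\lambda_3$ of the $4$-design restricted, namely $21-5=16$. Since $G_{24}$ is unique and its automorphism group $M_{24}$ is transitive on points, this gives both existence and uniqueness up to isomorphism.

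The main obstacle is the step that identifies $C$ with $G_{24}$ from the count of weight-$8$ words. It needs the known weight distributions of the self-dual $[24,12,6]$ code $C_{24}$ (and the fact that a $d=4$ code is excluded by the lower bound on $d(C)$). I would first check that $A_{(24,8,6)}<506$ using the data cited from~\cite{PS75}.
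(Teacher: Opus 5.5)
Your proof is correct, but for $\lambda=24$ it takes a different route from the paper, and for $\lambda=16$ it proves what the paper only cites.

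\textbf{What the paper does.} It excludes $\lambda=24$ inside the design. Solving \eqref{eq:BI} gives $m_6=2$, so two blocks meet in $6$ points. Their sum is then a weight-$4$ vector of $C(\cD)\subseteq C(\cD)^\perp$, which contradicts $d(C(\cD)^\perp)\ge5$ from Lemma~\ref{lem:dual}~(ii). Existence and uniqueness for $\lambda=16$ are simply quoted from~\cite{T86}.

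\textbf{What you do.} You work in length $24$. Since $d(C)\ge 6$, the self-dual code $C$ is $C_{24}$ or $G_{24}$. Because $C_{24}$ has only $375$ words of weight $8$ (its weight enumerator is $1+64y^6+375y^8+960y^{10}+\cdots$), you get $C\cong G_{24}$, and the $506$ octads avoiding a point then bound $|\cB|$.

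\textbf{Comparison.} Your route needs the classification of self-dual $[24,12,\ge 6]$ codes, the weight distribution of $C_{24}$, and the point-transitivity of $M_{24}$. For the nonexistence part this is heavier than the paper's short linear computation. In exchange, you prove existence and uniqueness for $\lambda=16$ rather than citing them, and you identify the design explicitly as the octads of $S(5,8,24)$ missing a fixed point. Your argument also gives $|\cB|\le 506$ for every $\lambda$, since $\lambda_1/\lambda_2=22/7$ does not depend on $\lambda$. This makes the appeal to $\mu'_{\max}$ unnecessary; only the exclusion of $\lambda=8$ via \eqref{eq:BI} is still needed.

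\textbf{Small points to fix.}
\begin{itemize}
\item The block count is $|\cB|=253\lambda/8$, not $253\lambda/56$.
\item $\lambda_1/\lambda_2=22/7$, not $22/6$. Neither slip affects the conclusion.
\item For existence, say explicitly that the residual design is self-orthogonal. This is immediate, since octads meet in $0$, $2$ or $4$ points.
\item Table~\ref{Tab:Non0} does not show that $C_{24}$ has few weight-$8$ words; it only records the maximum over $d\in\{6,8\}$. The value $375$ is the fact you need to import, for instance from~\cite{PS75}.
\end{itemize}
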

\begin{proof}
Up to isomorphism, 
there is a unique self-orthogonal $3$-$(23, 8, 16)$ 
design~\cite{T86}.
From Table~\ref{Tab:NonII}, it is sufficient to show
that there is no self-orthogonal $3$-$(23, 8, 24)$ 
design. 

Let $\cD=(X,\cB)$ be a self-orthogonal
$3$-$(23,8,24)$ design.
Fix a block $B\in\cB$.
From~\eqref{eq:BI}, we have $m_6=2$.
Thus, there is a block $B'$ with $|B\cap B'|=6$.
The sum of the incidence vectors of $B$ and $B'$
is a vector of weight $4$ in
$C(\cD)\subseteq C(\cD)^\perp$.
This contradicts Lemma~\ref{lem:dual} (ii), which gives
$d(C(\cD)^\perp)\geq5$.
\end{proof}

\begin{prop}\label{prop:6-39-20}
There is no self-orthogonal $6$-$(39,20,\lambda)$ design
for any positive integer $\lambda$.
\end{prop}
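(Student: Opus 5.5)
By Proposition~\ref{prop:nonex6}~(iii), it suffices to rule out a self-orthogonal $6$-$(39,20,3840)$ design $\cD=(X,\cB)$, i.e.\ the case $\lambda=\tau\mu$ with $\tau=120$, $\mu=32$. The plan is to mimic the arguments of Propositions~\ref{prop:23-8} and \ref{prop:22-10}. Combine the block intersection equations \eqref{eq:BI} for a fixed block $B$ with the lower bound on $d(C(\cD)^\perp)$ from Lemma~\ref{lem:dual}, and derive a contradiction.

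\emph{Step 1: a lower bound on the dual distance.} Since $t=6$ is even, Lemma~\ref{lem:dual}~(i) gives $d(C(\cD)^\perp)\ge 5+\lambda_5/\lambda=5+(39-5)/(20-5)=5+34/15$, so $d(C(\cD)^\perp)\ge 8$. Because $k=20\equiv 0\pmod 4$, $C(\cD)$ is doubly even. Hence every nonzero codeword of $C(\cD)\subseteq C(\cD)^\perp$ has weight divisible by $4$ and at least $8$.

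\emph{Step 2: restrict the intersection numbers.} For distinct blocks $B,B'$, the sum $\chi_B+\chi_{B'}$ lies in $C(\cD)$ and has weight $40-2|B\cap B'|$. This weight must be $\equiv 0\pmod 4$ and lie between $8$ and $39$. So $|B\cap B'|$ is even, and $40-2|B\cap B'|\ge 8$ forces $|B\cap B'|\le 16$. Also $40-2|B\cap B'|\le 39$ forces $|B\cap B'|\ge 1$, so $|B\cap B'|\ge 2$. Thus in \eqref{eq:BI} we may set $m_0=m_{18}=0$, leaving unknowns $m_2,m_4,\dots,m_{16}$ (eight unknowns) and seven equations $j=0,\dots,6$ with $\lambda=3840$.

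\emph{Step 3: the contradiction.} Solve the linear system for $m_2,\dots,m_{14}$ in terms of $m_{16}$, or take the analogue of the Lemma~\ref{lem:Mendel} combination that eliminates all but one or two unknowns. The expectation is that some $m_{2i}$ becomes a negative quantity or a non-integer for every $m_{16}\ge0$, for instance an identity of the form $c\,m_{16}=-(\text{positive})$.

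The main obstacle is that this may not close on its own. If the reduced system still admits a nonnegative integer solution, the next attempt would be to exploit that $39$ is odd. Then $\allone\notin C(\cD)^\perp$ in general, but appending a coordinate embeds $C(\cD)$ into a self-dual code of length $40$ with minimum weight $\ge 8$. From there one would apply the Mendelsohn equations \eqref{eq:M} to a suitable weight-$8$ or weight-$12$ vector of $C(\cD)^\perp$, as in Proposition~\ref{prop:3-36-12-15}, and look for a parity or divisibility contradiction with $\lambda=3840$. A further fallback is Lemma~\ref{lem:BS}: with $m$ from Table~\ref{Tab:un} equal to $11$, the equations \eqref{eq:unM} with $n_j$ supported on $j\le 5$ should have no nonnegative solution when $\lambda=3840$.
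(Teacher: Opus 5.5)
Your Step 3 does not close, and neither of your fully specified fallbacks repairs it. With only $m_0=m_{18}=0$, the system \eqref{eq:BI} for $\lambda=3840$ has the one-parameter solution
\[
(m_2,m_4,\dots,m_{16})=(160-s,\;7s-1080,\;7605-21s,\;53910+35s,\;165706-35s,\;85905+21s,\;11025-7s,\;s).
\]
This is a nonnegative integer solution for every integer $s$ with $155\le s\le 160$, so no sign or integrality contradiction appears. That is consistent with $\mu=32$ being the one value left open in Table~\ref{Tab:NonII}.

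The Lemma~\ref{lem:BS} fallback also fails. The pair $(39,11)$ appears in Table~\ref{Tab:un} precisely because \eqref{eq:mvD} holds there. The system \eqref{eq:unM} with $m=11$, together with the $i=0$ equation, has the nonnegative solution $(n_0,\dots,n_5)=(63,6545,94710,173250,47355,1309)$. The ``weight-$8$ or weight-$12$ vector'' idea is not specific enough to evaluate.

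The missing ingredient is a fact you explicitly deny. Since $C(\cD)$ is doubly even, all its codewords have even weight, so $\allone\in C(\cD)^\perp$; the paper uses this in Proposition~\ref{prop:22-10}. Then $\allone+\chi_B+\chi_{B'}\in C(\cD)^\perp$ has weight $39-(40-2|B\cap B'|)=2|B\cap B'|-1$. Your bound $d(C(\cD)^\perp)\ge 8$ therefore forces $|B\cap B'|\ge 6$, so $m_2=m_4=0$ as well. The family above then requires $7s=1080$, which is impossible. With this one extra constraint your intersection-number route closes, and it never needs the dimension of $C(\cD)$.

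The paper argues differently. Since $|\cB|=323232>2^{18}$, it gets $\dim C(\cD)=19$. Hence $\langle\{(x,0)\mid x\in C(\cD)\},\allone\rangle$ is a doubly even self-dual $[40,20]$ code, which is extremal by your Step 1. Such a code has $525504$ codewords of weight $20$, coming in complementary pairs. Only $262752<|\cB|$ of them have last coordinate $0$, a contradiction.
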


\begin{proof}
By Proposition~\ref{prop:nonex6} (iii), it is sufficient to
show that there is no self-orthogonal $6$-$(39,20,3840)$ design.
Let $\cD=(X,\cB)$ be such a design.
Since $|\cB|=323232>2^{18}$ and $C(\cD)$ is self-orthogonal,
the dimension of  $C(\cD)$ is $19$.
In addition, the following code
\[
C=\left\langle
\{(x,0)\mid x\in C(\cD)\},\allone
\right\rangle
\]
is a doubly even self-dual code of length $40$.

Puncturing $C$ in the last coordinate gives $C(\cD)^\perp$.
By Lemma~\ref{lem:dual} (i), we have
\[
d(C(\cD)^\perp)
\geq 5+\frac{\lambda_5}{\lambda}
=5+\frac{34}{15}>7.
\]
Thus, $C$ is an extremal doubly even self-dual code
of length $40$.
In addition, $C$ has exactly $525504$
codewords of weight $20$~\cite{MS73}.

Since $\allone\in C$, these codewords occur in pairs
$\{x,\allone+x\}$, with exactly one member of each pair
having last coordinate $0$.
Hence, exactly $262752$ codewords of weight $20$
have last coordinate $0$.
The incidence vectors of the blocks, with a zero coordinate
appended, are among these codewords.
This contradicts $|\cB|=323232$.
\end{proof}

%%%%%%% unsaturated examples %%%%%%%
%%%%%%% unsaturated examples %%%%%%%
For some cases $(k,v,m)$ given in Table~\ref{Tab:un},
we mention the existence of an unsaturated 
self-orthogonal $(\lfloor\frac{k}{4}\rfloor+1)$-$(v,k,\lambda)$
design $\cD$ such that $C(\cD)^\perp$ contains 
a vector $u$ of weight $m$ given in Lemma~\ref{lem:BS}.

\begin{itemize}
\item 
$(k,v,m)=(8,16, 4)$ and $(8,16, 6)$:
There are exactly five nonisomorphic $3$-$(16,8,3)$ designs.
Since these designs are Hadamard $3$-designs,
these are self-orthogonal.
We denote the design corresponding to the Sylvester Hadamard matrix
by $\cD_{16,8,1}$.
It is known that the code $C(\cD_{16,8,1})$ is $R(1,4)$,
which is a self-orthogonal $[16,5,8]$ code.
In addition, 
we verified by \textsc{Magma}~\cite{Magma} that $63$ nontrivial cosets of $C(\cD_{16,8,1})$
in $C(\cD_{16,8,1})^\perp$ have minimum weights $4$ and $6$.

We constructed by \textsc{Magma}~\cite{Magma} a self-orthogonal $3$-$(16,8,6)$ design
$\cD_{16,8,2}=(X,\cB)$ as follows.
Let $G$ be the permutation group of order $96$
on $X=\{1,2,\ldots,16\}$ generated by
\begin{align*}
&(1, 16, 3, 10)(2, 15, 4, 9)(5, 12, 7, 14)(6, 11, 8, 13),\\
&(3, 6, 7)(4, 5, 8)(9, 15, 13, 10, 16, 14)(11, 12),\\
&(9, 10)(11, 12)(13, 14)(15, 16).
\end{align*}
The block set $\cB$ is the union of the $G$-orbits
of the following base blocks:
\begin{align*}
&    \{1, 2, 5, 6, 9, 10, 13, 14\},
    \{1, 4, 5, 8, 9, 12, 13, 16\},
\\
&    \{1, 3, 6, 8, 9, 11, 14, 16\},
    \{1, 2, 3, 4, 5, 6, 7, 8\},
\\
&    \{3, 4, 9, 10, 11, 12, 15, 16\},
    \{2, 4, 5, 8, 10, 12, 14, 15\},
\\
&
    \{3, 4, 5, 6, 9, 10, 13, 14\}.
\end{align*}
There are two inequivalent doubly even self-dual codes of length
$16$ and the two codes are denoted by $e_8^2$ and
$d_{16}^+$ (see~\cite[Table~I]{C-S}).
We verified by \textsc{Magma}~\cite{Magma}
that $C(\cD_{16,8,2})$ is equivalent to $d_{16}^+$.

These designs are unsaturated, failing~\eqref{C1}
for $\lambda=3$ and~\eqref{C3} for $\lambda=6$.

\item
$(k,v,m)=(8,22, 6)$ and $(8,22, 7)$:
We denote by $\cD_{22,8}$ the unique self-orthogonal 
$3$-$(22, 8, 12)$ design given in~\cite{T86}.
We verified by \textsc{Magma}~\cite{Magma} that
the code $C(\cD_{22,8})$ is a self-orthogonal $[22, 10, 8]$ code.
In addition, 
we verified by \textsc{Magma}~\cite{Magma} that
three nontrivial cosets of $C(\cD_{22,8})$ in $C(\cD_{22,8})^\perp$
have minimum weights $6$ and $7$.
Hence, $\cD_{22,8}$ is unsaturated, failing~\eqref{C1}.

\item
$(k,v,m)=(8,23, 7)$:
We denote by $\cD_{23,8}$ the unique self-orthogonal 
$3$-$(23, 8, 16)$ design given in~\cite{T86}.
The code $C(\cD_{23,8})$ is a self-orthogonal $[23, 11, 8]$ code,
which is equivalent to $G_{23}^\perp$. 
In addition, we verified by \textsc{Magma}~\cite{Magma} that
the unique nontrivial coset of $C(\cD_{23,8})$
in $C(\cD_{23,8})^\perp$ has minimum weight $7$.
Hence, $\cD_{23,8}$ is unsaturated, failing~\eqref{C1}.

\item
$(k,v,m)=(10,22, 6)$:
Up to isomorphism, 
there is a unique self-orthogonal $3$-$(22, 10, 48)$ design and
the code  is equivalent to $C_{22}$~\cite{T86d}.
Since $C_{22}$ has minimum weight $6$,
this design is unsaturated, failing~\eqref{C3}.

\item
$(k,v,m)=(12,23, 7)$ and $(12,23, 8)$:
We denote by $\cD_{23,12}$ the unique self-orthogonal 
$4$-$(23, 12, 72)$ design~\cite{T86d}.
We verified by \textsc{Magma}~\cite{Magma} that the code $C(\cD_{23,12})$ is equivalent to $G_{23}^\perp$.
As described above,
the unique nontrivial coset of $C(\cD_{23,12})$
in $C(\cD_{23,12})^\perp$ has minimum weight $7$.
Since $C(\cD_{23,12})$ has parameters $[23,11,8]$,
$\cD_{23,12}$ is unsaturated, failing both~\eqref{C1}
and~\eqref{C3}.

\item
$(k,v,m)=(12,24, 8)$:
Up to isomorphism, 
there is a unique self-orthogonal $4$-$(24, 12, 120)$ design 
and 
the code is equivalent to $G_{24}$~\cite{T86d}.
Since $G_{24}$ has minimum weight $8$,
this design is unsaturated, failing~\eqref{C3}.

\item
$(k,v,m)=(12,47, 11)$:
We denote by $\cD_{47,12}$ the unique self-orthogonal $4$-$(47, 12, 36)$ 
design~\cite{HMT05}.
We verified by \textsc{Magma}~\cite{Magma} that the code $C(\cD_{47,12})$ is equivalent to
$QR_{47}^\perp$, which is 
a self-orthogonal $[47, 23, 12]$ code.
In addition, we verified by \textsc{Magma}~\cite{Magma} that
the unique nontrivial coset of $C(\cD_{47,12})$
in $C(\cD_{47,12})^\perp$ has minimum weight $11$.
Hence, $\cD_{47,12}$ is unsaturated, failing~\eqref{C1}.

\item
$(k,v,m)=(16,24,8)$:
A self-orthogonal $5$-$(24, 16, 78)$ design $\cD_{24,16}$
is obtained as the complementary design of the Witt system on $24$ points.
Thus, the code $C(\cD_{24,16})$ is equivalent to $G_{24}$.
Since $G_{24}$ has minimum weight $8$,
$\cD_{24,16}$ is unsaturated, failing~\eqref{C3}.

\item
$(k,v,m)=(16,48,12)$:
We denote by $\cD_{48,16}$  the self-orthogonal $5$-$(48, 16, 1365)$ design,
whose blocks are the supports of the codewords
of weight $16$ in $QR_{48}$.
We verified by \textsc{Magma}~\cite{Magma} that the code $C(\cD_{48,16})$ is the same as $QR_{48}$.
Since $QR_{48}$ has minimum weight $12$,
$\cD_{48,16}$ is unsaturated, failing~\eqref{C3}.

\end{itemize}

\section{Self-orthogonal designs satisfying~\eqref{C3}}
\label{sec:minwtk}

In this section, we work with a slightly weaker
hypothesis than the assumption that $\cD$ is saturated.
We assume
that $C(\cD)$ has minimum weight $k$.

\begin{prop}\label{prop:1}
Let $\cD$ be a self-orthogonal
$t$-$(v,k,\lambda)$ design with $k$ even, and let
$t_0=\left\lfloor\frac{k}{4}\right\rfloor+1$.
If $C(\cD)$ has minimum weight $k$ and
\begin{equation*}%\label{eq:tk}
t\geq t_0\geq2,
\end{equation*}
then
\begin{equation}\label{eq:lambdas}
\lambda_s=\frac{%
(-1)^{t_0-1}2^{2t_0-1}t_0\binom{k/2}{k/2-t_0}}{
\sum_{i=1}^{t_0} 
i (-2)^{i-1}\binom{2t_0-i-1}{t_0-1}
\binom{k}{i}
\prod_{j=i}^{t_0-1}\frac{v-j}{k-j}}
\prod_{j=s}^{t_0-1}\frac{v-j}{k-j}
\end{equation}
is an integer for $s=0,1,\ldots,t_0$.
\end{prop}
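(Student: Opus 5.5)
The plan is to apply the Mendelsohn equations~\eqref{eq:M} to the incidence vector $x=\chi_B$ of a fixed block $B\in\cB$. This vector lies in $C(\cD)\subseteq C(\cD)^\perp$ and has $m=k$. Then we extract a single linear relation among the $\lambda_i$ via Lemma~\ref{lem:Mendel} and evaluate the resulting constant using Lemma~\ref{lem:kk}.

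\emph{Step 1 (restricting intersection sizes).} For a block $B'\neq B$, the vector $\chi_B+\chi_{B'}$ is a nonzero codeword of $C(\cD)$ of weight $2k-2|B\cap B'|$. Since the minimum weight of $C(\cD)$ is $k$, we get $|B\cap B'|\le k/2$. Intersections are even, so $|B\cap B'|=2j$ with $2j\le k/2$, i.e.\ $j\le\lfloor k/4\rfloor=t_0-1$. The block $B$ itself contributes $n_{k/2}=1$, and $k/2\ge t_0$. Removing that contribution and the $i=0$ equation, \eqref{eq:M} for $i=1,\dots,t_0$ becomes
\[
\sum_{j=1}^{t_0-1}\binom{2j}{i}n_j=(\lambda_i-1)\binom{k}{i}\quad(i=1,\dots,t_0).
\]
This is valid because $t\ge t_0$, so $\lambda_1,\dots,\lambda_{t_0}$ are the design parameters.

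\emph{Step 2 (linear relation).} Lemma~\ref{lem:Mendel} with $t=t_0$ and $b_i=(\lambda_i-1)\binom{k}{i}$ gives
\[
\sum_{i=1}^{t_0}k_i\lambda_i\binom{k}{i}=\sum_{i=1}^{t_0}k_i\binom{k}{i}.
\]
Only the linear identity~\eqref{eq:kk} is used in its proof, so the nonnegativity hypothesis on the $b_i$ is irrelevant.

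\emph{Step 3 (evaluating the right-hand side).} Write $k=2J$, substitute $i\binom{2J}{i}=2J\binom{2J-1}{i-1}$, and put $i'=i-1$, $s=t_0-1$. The sum becomes
\[
2J\sum_{i'=0}^{s}(-2)^{i'}\binom{2s-i'}{s}\binom{2(J-1)+1}{i'}.
\]
By Lemma~\ref{lem:kk} this equals $2J(-4)^{t_0-1}\binom{J-1}{t_0-1}=(-1)^{t_0-1}2^{2t_0-1}t_0\binom{k/2}{t_0}$, and $\binom{k/2}{t_0}=\binom{k/2}{k/2-t_0}$. This matching of the constant with Lemma~\ref{lem:kk} is the only delicate step; it is essentially an index shift.

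\emph{Step 4 (solving for $\lambda_s$).} Substitute $\lambda_i=\lambda_{t_0}\prod_{j=i}^{t_0-1}\frac{v-j}{k-j}$ from~\eqref{eq:lambdass} into the left-hand side. The right-hand side is nonzero, since $k/2\ge t_0$. Hence the coefficient of $\lambda_{t_0}$, which is exactly the denominator in~\eqref{eq:lambdas}, is nonzero, and we can solve for $\lambda_{t_0}$. Multiplying by $\prod_{j=s}^{t_0-1}\frac{v-j}{k-j}$ gives $\lambda_s$ in the stated form. Each $\lambda_s$ with $0\le s\le t_0\le t$ counts blocks through an $s$-set, so it is an integer, which proves the claim.
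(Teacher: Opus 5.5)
Your proposal is correct and follows essentially the same route as the paper's proof. You fix a block and use minimum weight $k$ to confine intersections to sizes $2j$ with $j\le t_0-1$ (besides $n_{k/2}=1$). You then apply Lemma~\ref{lem:Mendel} with $b_i=(\lambda_i-1)\binom{k}{i}$, evaluate $\sum_i k_i\binom{k}{i}$ via Lemma~\ref{lem:kk} with $s=t_0-1$ and $j=k/2-1$, and solve for $\lambda_{t_0}$. Your explicit index shift in Step~3 simply writes out the computation that the paper states in one line.
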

\begin{proof}
Fix a block $B_0$ of $\cD$. 
Note that $\cD$ is a $t_0$-design.
Since $C(\cD)$ has minimum weight $k$, 
for any block $B$ we have $|B\cap B_0|\leq k/2$
unless $B=B_0$. Thus, we have
\[
n_j=0\text{ for }j>\left\lfloor\frac{k}{4}\right\rfloor\text{ and }j\neq \frac{k}{2},
\]
and obviously $n_{k/2}=1$.
From~\eqref{eq:M0}, we have
\begin{equation*}%\label{eq:saM}
\sum_{j=1}^{t_0-1} \binom{2j}{i} n_j 
=(\lambda_i-1)\binom{k}{i}
\ (i=1,2,\ldots,t_0).
\end{equation*}
By Lemma~\ref{lem:Mendel}, we have
\begin{equation*}%\label{eq:3}
\sum_{i=1}^{t_0} k_i(\lambda_i-1)\binom{k}{i}=0,
\end{equation*}
where $k_i$ is given in~\eqref{eq:kidef} with $t=t_0$.
From~\eqref{eq:lambdass}, we have
\[
\lambda_{t_0}\sum_{i=1}^{t_0} k_i\binom{k}{i}
\prod_{j=i}^{t_0-1}\frac{v-j}{k-j}
=\sum_{i=1}^{t_0} k_i\binom{k}{i}.
\]
By Lemma~\ref{lem:kk} with $s=t_0-1$ and $j=k/2-1$,
we have
\begin{align*}
\sum_{i=1}^{t_0} k_i\binom{k}{i}
&=k(-4)^{t_0-1}\binom{k/2-1}{t_0-1}\\
&=(-1)^{t_0-1}2^{2t_0-1}t_0
\binom{k/2}{k/2-t_0}\ne0.
\end{align*}
Thus, the coefficient of $\lambda_{t_0}$ is nonzero.
Solving for $\lambda_{t_0}$ and applying~\eqref{eq:lambdass}
gives~\eqref{eq:lambdas}.
\end{proof}

As a special case of~\eqref{eq:lambdas}, we have
\begin{equation}\label{eq:lambda}
\lambda_{t_0}=\frac{%
(-1)^{t_0-1}2^{2t_0-1}t_0\binom{k/2}{k/2-t_0}}{
\sum_{i=1}^{t_0} 
i (-2)^{i-1}\binom{2t_0-i-1}{t_0-1}
\binom{k}{i}
\prod_{j=i}^{t_0-1}\frac{v-j}{k-j}}.
\end{equation}

\begin{prop}\label{prop:2}
Let $\cD$ be a self-orthogonal 
$t$-$(v,k,\lambda)$ design with $k$ even and let
$t_0=\left\lfloor\frac{k}{4}\right\rfloor+1$.
If $C(\cD)$ has minimum weight $k$ and
\begin{equation*}%\label{eq:tk1}
t\geq t_0\geq2
\text{ and } k\leq 40,
\end{equation*}
then 
\begin{equation}\label{eq:list}
\begin{split}
&(t_0,v,k,\lambda_{t_0})=
(2,7,4,2),(2,8,4,3),
(2,9,4,6), % eliminated by the consideration of unsaturated case
\\ &\qquad
(2,16,6,2),
(2,21,6,4),(2,22,6,5),(2,24,6,10),%\\
(2,25,6,20),
\\ &\qquad
(3,16,8,3),(3,22,8,12),(3,23,8,16),(3,24,8,21),
\\ &\qquad
(3,26,8,28),(3,29,8,16),(3,30,8,12),(3,32,8,7),
\\ &\qquad
(3,26,10,3),(3,42,10,9),(3,46,10,8),(3,50,10,6),
\\ &\qquad
(4,36,12,15),(4,47,12,36),(4,48,12,44),(4,51,12,2640),
\\ &\qquad
(5,56,16,42),(5,64,16,91),(5,72,16,78),
\\ &\qquad
(7,120,24,231).
\end{split}
\end{equation}
\end{prop}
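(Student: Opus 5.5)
The plan is to combine the integrality condition of Proposition~\ref{prop:1} with a bound on $v$, reducing the statement to a finite computer search. By Proposition~\ref{prop:1}, for each even $k\le 40$ with $t_0=\lfloor k/4\rfloor+1\ge 2$ (so $k\ge4$), the number $\lambda_{t_0}$ is determined by $(v,k)$ through \eqref{eq:lambda}. Moreover, every $\lambda_s$ given by \eqref{eq:lambdas}, $s=0,1,\ldots,t_0$, must be a positive integer. Positivity alone is restrictive: the numerator has sign $(-1)^{t_0-1}$, so the denominator
\[
D(v)=\sum_{i=1}^{t_0} i(-2)^{i-1}\binom{2t_0-i-1}{t_0-1}\binom{k}{i}\prod_{j=i}^{t_0-1}\frac{v-j}{k-j}
\]
must have the same sign. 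In addition, $\lambda_{t_0}\ge1$ forces $|D(v)|\le 2^{2t_0-1}t_0\binom{k/2}{k/2-t_0}$.

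The first key step is to bound $v$ for each $k$. Viewed as a polynomial in $v$, $D(v)$ has degree $t_0-1$; its leading term comes from $i=1$ and equals
\[
k\binom{2t_0-2}{t_0-1}\frac{v^{t_0-1}}{(k-1)\cdots(k-t_0+1)},
\]
which is positive. For odd $t_0$ this agrees with the required sign $(-1)^{t_0-1}=+1$. In that case the bound $|D(v)|\le$ numerator gives an explicit upper bound on $v$, since $|D(v)|\to\infty$. For even $t_0$, $D(v)$ must be negative, which for large $v$ is impossible, again bounding $v$. I would make these bounds explicit, at least crudely, for example by showing $|D(v)|$ exceeds the numerator once $v\ge Ck$ for a modest constant $C$. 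Alternatively, I would use Lemma~\ref{lem:dual} together with the self-orthogonality constraint $\dim C(\cD)\le v/2$. The latter gives $b=\lambda_0\le 2^{\lfloor v/2\rfloor}$, so $\lambda_0$ grows only polynomially while the allowed range is exponential, which is less useful. The analytic bound from $D(v)$ is therefore the one I would rely on.

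The second step is a finite enumeration for each $k\in\{4,6,\ldots,40\}$ over $k<v\le v_{\max}(k)$. For each pair I compute $\lambda_s$ from \eqref{eq:lambdas} for all $s\le t_0$ and keep the pairs where all are positive integers. This computation, done in \textsc{Magma} as elsewhere in the paper, should produce exactly the list \eqref{eq:list}. A sanity check is that the known cases $(3,24,8,21)$, $(4,48,12,44)$, $(5,72,16,78)$ and $(7,120,24,231)$ appear; these correspond to extremal doubly even self-dual codes, whose $t_0$-design parameters follow from Assmus--Mattson.

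I expect the main obstacle to be the rigorous bound on $v$ uniformly in $k\le 40$. The polynomial $D(v)$ has alternating-sign terms, so for small $v$ it may oscillate. For the sign and magnitude arguments it suffices to find, for each $k$, a value $v_0$ beyond which the leading term dominates, and then to check all smaller $v$ directly. For this I would bound the lower-order coefficients by their absolute values and use a Cauchy-type root bound. Since $k\le 40$ means $t_0\le 11$, this is a finite check that is safe to delegate to the computer along with the enumeration.
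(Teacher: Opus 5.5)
Your proposal is correct and follows the paper's proof. The paper likewise uses positivity and $\lambda_{t_0}\ge 1$ in \eqref{eq:lambda} to get a cutoff $\gamma(k)$ for each even $k\le 40$; for $k=10$ this is $\lambda_{t_0}=1152/(v^2-84v+1892)$, giving $(v-10)(v-74)\le 0$. It then enumerates $k<v<\gamma(k)$ by computer, keeping the $v$ for which $\lambda_0,\dots,\lambda_{t_0}$ are all positive integers.

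One small correction to your sanity check: for a putative extremal $[120,60,24]$ code, Assmus--Mattson gives only a $5$-design with $\lambda_5=\binom{23}{4}=8855$. So $(7,120,24,231)$ is merely consistent with that value, not an Assmus--Mattson $7$-design.
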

\begin{proof} 
For each even $k$ with $4\leq k\leq40$,
we verified by \textsc{Magma}~\cite{Magma} that
$v<\gamma(k)$ whenever $v>k$ and
$\lambda_{t_0}$ in~\eqref{eq:lambda} is a positive integer,
where $\gamma(k)$ is given in Table~\ref{Tab:list}.
Then we determined  by \textsc{Magma}~\cite{Magma} the set $V(k)$ of integers $v$
with $k<v<\gamma(k)$ for which
$\lambda_0,\lambda_1,\ldots,\lambda_{t_0}$
are positive integers.
The values of $\gamma(k)$ and $V(k)$ are listed
in Table~\ref{Tab:list}.
To illustrate our approach, 
we give details for $k=10$.
If $k=10$, then
\[
\lambda_{t_0}= \frac{1152}{v^2 - 84 v + 1892}.
\]
Since $\lambda_{t_0} \ge 1$, 
$v^2 - 84 v + 740=(v-10)(v-74) \le 0$.
Thus, we may take $\gamma(10) = 75$.
Checking all integers $v$ with $10<v<75$,
we verified by \textsc{Magma}~\cite{Magma} that
$V(10)=\{26,42,46,50\}$.
\end{proof}

\begin{rem}
The following cases 
\[
(t_0,v,k,\lambda_{t_0})=(3,26,8,28),
(3,30,8,12)\text{ and }(4,36,12,15)
\]
in~\eqref{eq:list}
are excluded by Propositions~\ref{prop:nonex3},
\ref{prop:nonex2} and \ref{prop:3-36-12-15}, respectively.
\end{rem}

%%%%%%%%%%%%%%%%%%%%%%%%%%%
\begin{table}[thb]
\caption{$\gamma(k)$ and $V(k)$ in the proof of Proposition~\ref{prop:2}} 
\label{Tab:list}
\begin{center}
{\small
%{\footnotesize
%{\scriptsize
\begin{tabular}{c|c|l||c|c|l}
\noalign{\hrule height0.8pt}
$k$ & $\gamma(k)$ & \multicolumn{1}{c||}{$V(k)$} &
$k$ & $\gamma(k)$ & \multicolumn{1}{c}{$V(k)$} \\
\hline
 4& 10& $\{7,8,9\}$                           &24&151& $\{120\}$\\
 6& 26& $\{16,21,22,24,25\}$                  &26&195& $\emptyset$\\
 8& 45& $\{16, 22, 23$, $24, 26, 29, 30, 32\}$&28&139& $\emptyset$\\
10& 75& $\{26,42,46, 50\}$                    &30&158& $\emptyset$\\
12& 52& $\{36, 47, 48, 51\}$             &32&224& $\emptyset$\\
14& 70& $\emptyset$                      &34&256& $\emptyset$\\
16&104& $\{56,64,72\}$                &36&183& $\emptyset$\\
18&  1& $\emptyset$                     &38&202& $\emptyset$\\
20& 95& $\emptyset$                     &40&284& $\emptyset$\\
22&114& $\emptyset$ &&&\\
\noalign{\hrule height0.8pt}
\end{tabular}
}
\end{center}
\end{table}
%%%%%%%%%%%%%%%%%%%%%%%%%%%%%%%%%%%%%%%%%%%%%%%%

%%%%%%%%%%%%%%%%%%%%%%%%%%%%%%%%%%%%%%%%%%%%%%%%%%%%%%%
\section{Self-orthogonal codes and designs satisfying~\eqref{C2}
and \eqref{C3}} \label{sec:code}

The aim of this section is to give a proof of Theorem~\ref{thm:code}.

Since $k$ is even and $t<k$, the assumptions
$2\leq t\leq11$ and $t\geq\lfloor k/4\rfloor+1$
imply $4\leq k\leq42$.
For $k=42$, we have $t_0=11$.
We verified by \textsc{Magma}~\cite{Magma} that
$\lambda_{11}$ in~\eqref{eq:lambda} is not a positive integer
for any integer $v$ with $42<v<316$.
Write $\lambda_{11}=N/D(v)$, where $N>0$ is the numerator
and $D(v)$ is the denominator in~\eqref{eq:lambda}.
We also verified by \textsc{Magma}~\cite{Magma}
that all coefficients of $D(316+x)-N$ are positive.
Hence, $D(v)>N>0$ for all $v\geq316$, and therefore
$0<\lambda_{11}<1$.
Thus, Proposition~\ref{prop:1} excludes the case $k=42$.

The cases $k=8,10,16,18$ are covered by
Theorem~\ref{thm:Labayle}.
Therefore, we classify self-orthogonal
$(\lfloor k/4\rfloor+1)$-$(v,k,\lambda)$ designs
satisfying~\eqref{C2} and \eqref{C3}
for the remaining even values of $k$ with $4\leq k\leq40$.

\begin{lem}\label{lem:C2C3}
Let $\cD$ be a self-orthogonal
$t$-$(v,k,\lambda)$ design with $k$ even.
Then $\cD$ satisfies~\eqref{C2} and \eqref{C3}
if and only if $C(\cD)$ has minimum weight $k$
and the supports of its codewords of weight $k$
are precisely the blocks of $\cD$.
\end{lem}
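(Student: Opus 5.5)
This is essentially an unwinding of definitions, and I would prove the two implications separately. Recall that \eqref{C2} says $\cB$ is exactly the set of supports of the weight-$k$ codewords of $C(\cD)$, and \eqref{C3} says the minimum weight of $C(\cD)$ is $k$. The right-hand side of the lemma is literally the conjunction ``minimum weight $k$'' and ``supports of weight-$k$ codewords are precisely the blocks''. So the proof only needs to confirm that the two statements use the same notion of ``support of a weight-$k$ codeword''. It also needs to confirm that no hidden multiplicity issue arises.

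For the forward direction, assume \eqref{C2} and \eqref{C3}. Then $C(\cD)$ has minimum weight $k$ by \eqref{C3}. The set of supports of its weight-$k$ codewords equals $\cB$ by \eqref{C2}.

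For the converse, assume $C(\cD)$ has minimum weight $k$ and the supports of its weight-$k$ codewords are exactly the blocks. Then \eqref{C3} holds immediately, and \eqref{C2} is the second assumption verbatim.

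The one point I would note explicitly is that over $\FF_2$ a vector is determined by its support. Hence the map from weight-$k$ codewords to $k$-subsets, $c\mapsto\supp(c)$, is injective, so ``set of supports'' and ``collection of blocks'' match without repetition. This uses that designs have no repeated blocks. In addition, every block's incidence vector lies in $C(\cD)$ and has weight $k$, so the inclusion $\cB\subseteq\{\supp(c)\}$ is automatic. I expect no real obstacle; the only care needed is this bookkeeping about supports versus codewords.
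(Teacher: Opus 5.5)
Your proposal is correct and matches the paper, whose proof is just the remark that the equivalence follows directly from the definitions \eqref{C2} and \eqref{C3}. Your extra bookkeeping (injectivity of $c\mapsto\supp(c)$, no repeated blocks, block incidence vectors lying in $C(\cD)$) is harmless but not needed.
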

\begin{proof}
This follows directly from~\eqref{C2} and \eqref{C3}.
\end{proof}

\begin{rem}
Let $C$ be a self-orthogonal code of minimum weight $k$
whose codewords of minimum weight form a $t$-design $\cD$.
Then $\cD$ satisfies~\eqref{C2} and \eqref{C3},
and $C(\cD)\subseteq C$, but the inclusion may be strict.
Thus, to classify such codes $C$, we must also determine
whether there are self-orthogonal codes properly containing
$C(\cD)$ with minimum weight $k$ and exactly the same
codewords of weight $k$.
\end{rem}

Let $\cD$ be a self-orthogonal
$(\lfloor k/4\rfloor+1)$-$(v,k,\lambda)$ design satisfying
\eqref{C2} and \eqref{C3},
where $k$ is even with $4\leq k\leq40$.
Set $t=\lfloor k/4\rfloor+1$.
By Proposition~\ref{prop:2},
$(t,v,k,\lambda)$ is listed in~\eqref{eq:list}.
For $(v,k)=(9,4),(25,6),(29,8)$ and $(51,12)$,
it follows from \eqref{eq:un46} and Table~\ref{Tab:un}  that
\eqref{eq:mvD} has no integer solution with $0<m<v$.
By Proposition~\ref{prop:BS}, $\cD$ must be saturated,
which is impossible since $v$ is odd.
The following cases
\[
(t,v,k,\lambda)=(3,26,8,28),(3,30,8,12)
\text{ and }(4,36,12,15)
\]
are excluded by Propositions~\ref{prop:nonex3},
\ref{prop:nonex2} and \ref{prop:3-36-12-15}, respectively.
We consider the remaining cases in~\eqref{eq:list} below.

\begin{itemize}
\item $(t,k)=(2,4)$:
The remaining parameters are $(v,\lambda)=(7,2)$ and $(8,3)$,
for which $|\cB|=7$ and $14$, respectively.
Since
\[
|\cB|+1\leq |C(\cD)|\leq2^{\lfloor \frac{v}{2}\rfloor},
\]
$C(\cD)$ has parameters $[7,3,4]$ and $[8,4,4]$, respectively.
These codes are unique up to equivalence and are
$e_7^\perp$ and $e_8$, respectively.
From~\eqref{C2}, $\cD$ is therefore isomorphic to
the unique $2$-$(7,4,2)$ design or
the unique $3$-$(8,4,1)$ design, respectively.
Both codes are maximal self-orthogonal.

\item $(t,k)=(2,6)$:
Distinct blocks meet in $0$ or $2$ points, since an intersection
of size $4$ yields a codeword of weight $4$,
contrary to~\eqref{C3}.
Thus, $\cD$ is symmetric when $v=16$ and quasi-symmetric otherwise.

For $v=16$, let $B_1,B_2$ and $B_3$ denote the three nonisomorphic
symmetric $2$-$(16,6,2)$ designs listed
in~\cite[Section~15.8]{Hall}\footnote{The tenth block of $B_1$
should be $\{2,6,10,12,14,15\}$.}.
We verified by \textsc{Magma}~\cite{Magma} that
only $B_1$ satisfies~\eqref{C2} and \eqref{C3},
and that $C(B_1)$ has parameters $[16,6,6]$.
This self-orthogonal code is unique up to equivalence,
and no self-orthogonal $[16,7,6]$ code exists~\cite{BBGO}.
Hence, no self-orthogonal code of minimum weight $6$
properly contains $C(B_1)$.

For $v=21$, there is a unique quasi-symmetric
$2$-$(21,6,4)$ design, up to isomorphism.
It satisfies~\eqref{C2} and \eqref{C3}, and its code is
equivalent to a $[21,10,6]$ code obtained by shortening
$C_{22}$ in one coordinate~\cite{T86b}.
All such shortened codes are equivalent.

For $v=22$, every quasi-symmetric $2$-$(22,6,5)$ design
is isomorphic to the unique $3$-$(22,6,1)$ design,
which satisfies~\eqref{C2} and \eqref{C3} and generates
a code equivalent to $C_{22}$~\cite{T86b}.
Both the $[21,10,6]$ code and $C_{22}$ are maximal
self-orthogonal.

For $v=24$, Proposition~\ref{prop:BS} and~\eqref{eq:un46}
imply that $\cD$ is saturated.
Hence, $C(\cD)$ is equivalent to $C_{24}$,
which has only $64$ codewords of weight $6$~\cite{PS75},
contradicting $|\cB|=184$.

\item $(t,k)=(4,12)$:
For $(v,\lambda)=(47,36)$ and $(48,44)$,
there is a unique self-orthogonal $4$-$(v,12,\lambda)$ design,
up to isomorphism.
These designs satisfy~\eqref{C2} and \eqref{C3},
and their codes are equivalent to
$QR_{47}^\perp$ and $QR_{48}$, respectively~\cite{HMT05}.
Both codes are maximal self-orthogonal.

\item $(t,k)=(3,8),(3,10)$ and $(5,16)$:
These cases are covered by Theorem~\ref{thm:Labayle}.
\end{itemize}

\begin{prop}\label{prop:7-120}
There is no self-orthogonal $7$-$(120,24,231)$ design.
\end{prop}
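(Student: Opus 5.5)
The plan is to show that any such design $\cD$ must be saturated, so that $C(\cD)$ would be an extremal doubly even self-dual code of length $120$ whose minimum-weight supports form a $7$-design, and then to rule that out.

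First, $k=24$, so $t_0=\lfloor 24/4\rfloor+1=7=t$. The pair $(v,k)=(120,24)$ does not occur in Table~\ref{Tab:un}; for $k=24$ the only entries there are $(48,12)$, $(72,16)$ and $(96,20)$. Hence \eqref{eq:mvD} has no integer solution with $0<m<120$. If $\cD$ were unsaturated, Proposition~\ref{prop:BS} would produce such a solution. So $\cD$ is saturated.

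Since $24\equiv 0\pmod 4$, $C(\cD)$ is a doubly even self-dual $[120,60,24]$ code. The extremal bound at length $120=24\cdot 5$ is $4\cdot 5+4=24$, so $C(\cD)$ is extremal. By \eqref{C2}, $\cB$ is exactly the set of supports of the weight-$24$ codewords.

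As a consistency check I would compute $b=\lambda_0=231\binom{120}{7}/\binom{24}{7}$. It should agree with the number $A_{24}$ of weight-$24$ codewords forced by the unique extremal weight enumerator, about $3.97\times 10^7$; my rough estimate suggests it does. The parameters also match what Assmus--Mattson gives: a $5$-$(120,24,\binom{23}{4})$ design. Extending this formally gives $\lambda_6=1463$ and $\lambda_7=231$, both integers. So neither counting nor naive divisibility will give the contradiction, and a finer tool is needed.

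The key step, and the main obstacle, is to show that the minimum-weight supports of an extremal doubly even self-dual code of length $24\ell$ never form a $6$-design, here for $\ell=5$. Remark~\ref{rem:24-48} points to the proof of Theorem~5 in~\cite{BKST} for general $\ell$. I would invoke that argument, which uses harmonic weight enumerators and the Assmus--Mattson/Bachoc framework, to conclude that the weight-$24$ supports of $C(\cD)$ are not a $6$-design. They are therefore not a $7$-design, which contradicts $\cD$ being a $7$-$(120,24,231)$ design.

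If that citation does not directly cover $\ell=5$, I would reproduce its core computation. That means forming the harmonic weight enumerator of degree $6$ for a $6$-design. By Bachoc's theorem it must be an invariant of the appropriate weight times a power of $x^4y^4(x^4-y^4)^4$. I would then show that the extremal constraint $A_4=\cdots=A_{20}=0$ forces the coefficient for the weight-$24$ codewords to be nonzero, so the design cannot be $6$-homogeneous.
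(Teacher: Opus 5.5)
Your proposal is correct and is essentially the paper's own proof. Proposition~\ref{prop:BS}, together with the absence of $(120,24)$ from Table~\ref{Tab:un}, forces $\cD$ to be saturated, so $C(\cD)$ is an extremal doubly even self-dual $[120,60,24]$ code whose weight-$24$ supports form $\cD$; since $\cD$ is in particular a $6$-design, this contradicts \cite[Theorem~5]{BKST}.

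Your consistency checks ($b=39703755$ and $\lambda_5=\binom{23}{4}=8855$) are correct but not needed. The harmonic-weight-enumerator fallback is also unnecessary, since \cite[Theorem~5]{BKST} covers all lengths $24\ell$.
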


\begin{proof}
Let $\cD$ be a self-orthogonal $7$-$(120,24,231)$ design.
By Proposition~\ref{prop:BS} and Table~\ref{Tab:un},
$\cD$ is saturated.
Thus, $C(\cD)$ is an extremal doubly even self-dual
$[120,60,24]$ code whose codewords of minimum weight
have supports forming $\cD$.
This contradicts~\cite[Theorem~5]{BKST},
since $\cD$ is also a $6$-design.
\end{proof}

We are now in a position to give a proof of
Theorem~\ref{thm:code}.

\begin{proof}[Proof of Theorem~\ref{thm:code}]
Let $C$ be as in Theorem~\ref{thm:code}, and suppose that
$2\leq t\leq11$.
Then $4\leq k\leq42$.
By Lemma~\ref{lem:C2C3} and the above arguments,
we obtain a classification of self-orthogonal codes
of minimum weight $k$ whose codewords of minimum weight
form a $(\lfloor k/4\rfloor+1)$-design,
for $4\leq k\leq42$.
For $t=2,3,4$ and $5$, the assertions of Theorem~\ref{thm:code} 
follow from this classification, the known design properties
of these codes and the integrality conditions
in~\eqref{eq:lambdass}.

Suppose that $6\leq t\leq11$.
The above classification gives $k\leq16$.
Since a $t$-design is also a $5$-design,
Theorem~\ref{thm:Labayle} (ii) implies that $C$ is equivalent
to one of $G_{24}$, $QR_{48}$ and $C_{72}$.
However, the supports of their minimum-weight codewords
do not form $6$-designs by Remark~\ref{rem:24-48} (ii).
This completes the proof of Theorem~\ref{thm:code}.
\end{proof}

In addition, as a consequence of Theorem~\ref{thm:code},
we have the following classification of 
certain saturated designs.

\begin{cor}\label{cor:saturated}
Let $\cD$ be a self-orthogonal $t$-$(v,k,\lambda)$ design,
where $k$ is even, $4\leq k\leq40$ and $t\geq\lfloor k/4\rfloor+1$.
Then $\cD$ is saturated if and only if
$\cD$ is isomorphic to one of the following:
\begin{enumerate}
\item 
the unique $3$-$(8,4,1)$ design,
\item 
the unique $3$-$(22,6,1)$ design,
\item 
the unique $5$-$(24,8,1)$ design,
\item 
the five nonisomorphic self-orthogonal $3$-$(32,8,7)$ designs,
\item 
the unique self-orthogonal $3$-$(46,10,8)$ design,
\item 
the unique self-orthogonal $5$-$(48,12,8)$ design\text{ and}
\item 
a putative self-orthogonal $5$-$(72,16,78)$  design.
\end{enumerate}
\end{cor}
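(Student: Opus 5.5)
The plan is to derive Corollary~\ref{cor:saturated} from the classification in Section~\ref{sec:code} together with Lemma~\ref{lem:C2C3}.

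For the forward direction, suppose $\cD$ is saturated. Then $\cD$ satisfies \eqref{C2} and \eqref{C3}. Hence $C(\cD)$ is a self-orthogonal code of minimum weight $k$ whose minimum-weight codewords have supports forming the $t$-design $\cD$. Since $t\ge\lfloor k/4\rfloor+1$, the design $\cD$ is also a $t_0$-design with $t_0=\lfloor k/4\rfloor+1\ge 2$, so the classification behind Theorem~\ref{thm:code} applies with $4\le k\le 40$. The candidate codes are $e_7^\perp$, $e_8$, the $[16,6,6]$ code, the shortened $[21,10,6]$ code, $C_{22}$, $(C_{22})_0$, $R(1,4)$, $G_{23}^\perp$, $G_{24}$, $C_{32,1},\dots,C_{32,5}$, $C_{46}$, $QR_{47}^\perp$, $QR_{48}$ and $C_{72}$.

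Condition \eqref{C1} then discards every candidate that is not self-dual. This removes all odd lengths, and also the codes of dimension less than $v/2$: $R(1,4)$, $(C_{22})_0$ and the $[16,6,6]$ code. What remains is $e_8$, $C_{22}$, $G_{24}$, $C_{32,i}$, $C_{46}$, $QR_{48}$ and $C_{72}$. Next I must check that the design really is generated by the code, i.e.\ $C(\cD)=C$ with $C$ self-dual. In the $(t,k)=(2,4)$ and $(2,6)$ cases this was shown directly. For $k=8,10,16$ I will appeal to the known results cited via Theorem~\ref{thm:Labayle}: the minimum-weight codewords generate the code, as in \cite{HMT05}, \cite{CPS} and the Assmus--Mattson setting. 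I expect this step to be the main obstacle, since one must confirm in each case that the weight-$k$ words span the whole self-dual code. For $C_{32,i}$ the weight-$8$ words (there are 620 of them) span; for $C_{46}$ this is in \cite{HMT05}; for $C_{72}$ one argues that the span of the minimum-weight words is self-orthogonal and contains a design with $\lambda_{t-1}/\lambda$ large enough. If the span were proper, Lemma~\ref{lem:BS} and Proposition~\ref{prop:BS} would give a solution of \eqref{eq:mvD}, and $(72,16)$ does not appear in Table~\ref{Tab:un}. This last argument handles all remaining cases uniformly, so I would use it first. The designs are then identified by their parameters: $3$-$(8,4,1)$, $3$-$(22,6,1)$, $5$-$(24,8,1)$, $3$-$(32,8,7)$ (one for each of the five codes, and nonisomorphic because the codes are inequivalent), $3$-$(46,10,8)$, $5$-$(48,12,8)$ and $5$-$(72,16,78)$. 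These parameters come from the known strength of each design (Assmus--Mattson for the extremal doubly even codes).

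For the converse, each listed design is self-orthogonal with $k$ even and $t\ge\lfloor k/4\rfloor+1$. Its code is the corresponding self-dual code of minimum weight $k$, and its blocks are exactly the supports of the weight-$k$ words, so \eqref{C1}--\eqref{C3} hold. Uniqueness up to isomorphism in each item comes from the uniqueness of the code together with \eqref{C2}: the blocks are determined by the code.
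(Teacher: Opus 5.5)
Your proposal is correct. The forward direction is the paper's argument: saturation makes $C(\cD)$ one of the codes of Theorem~\ref{thm:code} (with $t=\lfloor k/4\rfloor+1$). Condition \eqref{C1} then leaves only $e_8$, $C_{22}$, $G_{24}$, $C_{32,i}$, $C_{46}$, $QR_{48}$ and $C_{72}$, and \eqref{C2} determines $\cD$.

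Where you differ is the converse. The paper only cites \cite{HKM04}, \cite{HMT05}, \cite{T86b} and \cite{T86c} for \eqref{C1}--\eqref{C3}. Your ``uniform'' argument instead proves the key point with the paper's own tools:
\begin{enumerate}
\item For the minimum-weight design $\cD$ of each self-dual code $C$, the inclusion $C(\cD)\subseteq C$ gives \eqref{C2} and \eqref{C3} immediately.
\item If $C(\cD)\subsetneq C$, then $\dim C(\cD)<v/2$, so $C(\cD)\neq C(\cD)^\perp$.
\item Proposition~\ref{prop:BS} would then give an integer solution of \eqref{eq:mvD} with $0<m<v$. By \eqref{eq:un46} and Table~\ref{Tab:un}, no such solution exists for $(v,k)=(8,4),(22,6),(24,8),(32,8),(46,10),(48,12),(72,16)$. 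For $(24,8),(48,12),(72,16)$ this is Theorem~\ref{thm:24-96}.
\end{enumerate}
This is valid and more self-contained. It also justifies your step ``nonisomorphic because the codes are inequivalent,'' since each design now generates its code.

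It does not remove all reliance on the literature. You still need two cited facts:
\begin{enumerate}
\item the minimum-weight words form $t_0$-designs with the stated parameters, which is needed to invoke Proposition~\ref{prop:BS};
\item the named objects in the list (``the unique self-orthogonal $3$-$(46,10,8)$ design,'' ``the five \dots\ $3$-$(32,8,7)$ designs'') are exactly these code designs.
\end{enumerate}
Your closing uniqueness remark (uniqueness of the code together with \eqref{C2}) only gives uniqueness among saturated designs, not among all self-orthogonal designs with those parameters. That identification must be cited, as the paper does.

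The ``span'' check you placed in the forward direction is unnecessary there, since $C(\cD)$ is generated by $\cD$ by definition. It belongs only to the converse.
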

\begin{proof}
If $\cD$ is saturated, then $C(\cD)$ is self-dual
and has minimum weight $k$.
Applying Theorem~\ref{thm:code} with $t=\lfloor k/4\rfloor+1$
and using~\eqref{C2} gives the designs listed above.
Conversely, every listed design satisfies
\eqref{C1}, \eqref{C2} and \eqref{C3}
(see~\cite{HKM04}, \cite{HMT05}, \cite{T86b} and \cite{T86c}).
\end{proof}

%%%%%%%%%%%%%%%%%%%%
\bigskip
\noindent
\textbf{Acknowledgments.}
This work was supported by JSPS KAKENHI Grant Number 23K25784.
The results presented in this paper were mainly obtained during discussions with 
Akihiro Munemasa around 2014 and 2015. 
The author is deeply grateful to him for his useful suggestions, continuous discussions
and for allowing the author to publish this paper as a single author.

%%%%%%%%%%%%%%%%%%%%%%%%%%%%%%
\bigskip
\noindent
\textbf{Use of generative AI.}
In October 2026, while preparing this paper for submission,
the author used OpenAI's ChatGPT (GPT-6 Astra) to assist in 
checking the proofs, simplifying arguments and improving the exposition.

%%%%%%%%%%%%%%%%%  References  %%%%%%%%%%%%%%%%%%%%%%%%


\begin{thebibliography}{99}

\bibitem{AM} E.F. Assmus, Jr.\ and H.F. Mattson, Jr.,
{New $5$-designs},
\textsl{J.\ Combin.\ Theory}
\textbf{6} (1969), 122--151.

\bibitem{BKST}
E. Bannai, M. Koike, M. Shinohara and M. Tagami,
Spherical designs attached to extremal lattices and the modulo $p$ 
property of Fourier coefficients of extremal modular forms, 
\textsl{Mosc.\ Math.\ J.} \textbf{6}  (2006), 225--264.

\bibitem{BR02}R.T. Bilous and G.H.J. van Rees,
An enumeration of binary self-dual codes of length $32$,
\textsl{Des.\ Codes Cryptogr.}
\textbf{26} (2002), 61--86.

\bibitem{Magma}W. Bosma, J. Cannon and C. Playoust,
The Magma algebra system I: The user language,
\textsl{J. Symbolic Comput.}
\textbf{24} (1997), 235--265.

\bibitem{BBGO}
I. Bouyukliev, S. Bouyuklieva, T.A. Gulliver and P.R.J. \"Osterg\aa rd,
Classification of optimal binary self-orthogonal codes,
\textsl{J. Combin.\ Math.\ Combin.\ Comput.}
\textbf{59} (2006), 33--87.

\bibitem{C-S} J.H.~Conway and N.J.A.~Sloane,
A new upper bound on the minimal distance of self-dual codes,
\textsl{IEEE\ Trans.\ Inform.\ Theory}
\textbf{36} (1990), 1319--1333.

\bibitem{CPS} J.H. Conway, V. Pless and N.J.A. Sloane,
{The binary self-dual codes of length up to $32$: a revised enumeration},
\textsl{J.\ Combin. Theory Ser.~A}
\textbf{60} (1992), 183--195.

\bibitem{CW12} J. de la Cruz and W. Willems, 
$5$-designs related to binary extremal self-dual codes of
length $24m$,
Theory and applications of finite fields, 75--80, 
Contemp. Math., 579, Amer. Math. Soc., Providence, RI, 2012. 

\bibitem{Hall}M. Hall, Jr.,
Combinatorial Theory (second edition),
John Wiley \& Sons, New York, 1986.

\bibitem{H05} M. Harada, 
Remark on a $5$-design related to a putative extremal doubly-even 
self-dual $[96,48,20]$ code,
\textsl{Des.\ Codes Cryptogr.}
\textbf{37} (2005), 355--358.

\bibitem{HKM04} M. Harada, M. Kitazume and A. Munemasa, 
On a $5$-design related to an extremal doubly even self-dual code of length $72$,
\textsl{J. Combin.\ Theory Ser.~A}
\textbf{107} (2004), 143--146.

\bibitem{HM36} M. Harada and A. Munemasa,
Classification of self-dual codes of length $36$,
\textsl{Adv. Math. Commun.}
\textbf{6} (2012), 229--235.


\bibitem{HMT05} M. Harada, A. Munemasa and V.D. Tonchev, 
A characterization of designs related to an
extremal doubly-even self-dual code of length $48$,
\textsl{Ann.\ Comb.}
\textbf{9} (2005), 189--198.


\bibitem{Labayle}M. Lalaude-Labayle, 
On binary linear codes supporting $t$-designs,
\textsl{IEEE Trans.\ Inform.\ Theory}
\textbf{47} (2001), 2249--2255. 

\bibitem{MS73}C.L. Mallows and N.J.A. Sloane, 
An upper bound for self-dual codes, 
\textsl{Inform.\ Control}
\textbf{22} (1973), 188--200. 

\bibitem{M71} N.S. Mendelsohn, 
Intersection numbers of $t$-designs,
In: Studies in Pure Mathematics (presented
to Richard Rado), Academic Press, London 1971, 145--150.

\bibitem{Pless72}V. Pless, 
A classification of self-orthogonal codes over GF($2$),
\textsl{Discrete Math.} 
\textbf{3} (1972), 209--246.

\bibitem{PS75}V. Pless and N.J.A. Sloane, 
On the classification and enumeration of self-dual codes,
\textsl{J.\ Combin. Theory Ser.~A}
\textbf{18}  (1975), 313--335.

\bibitem{T86} V.D. Tonchev,
{A characterization of designs related to the Witt system $S(5,8,24)$},
\textsl{Math.\ Z.}
\textbf{191} (1986), 225--230.

\bibitem{T86b} V.D. Tonchev,
Quasi-symmetric designs and self-dual codes,
\textsl{Europ.\ J. Combin.}
\textbf{7} (1986), 67--73.

\bibitem{T86c} V.D. Tonchev,
Quasi-symmetric $2$-$(31,7,7)$ designs and a revision of 
Hamada's conjecture,
\textsl{J. Combin.\ Theory Ser.~A}
\textbf{42}  (1986),  104--110.

\bibitem{T86d} V.D. Tonchev,
A characterization of designs related to dodecads in the Witt 
system $S(5,8,24)$, 
\textsl{J. Combin.\ Theory Ser.~A}
\textbf{43}  (1986),  219--227.

\end{thebibliography}
\end{document}